\theoremstyle{plain}
  \newtheorem{theorem}{Theorem}[section]%[subsection]
\theoremstyle{plain}
  \newtheorem{lemma}[theorem]{Lemma}
  \newtheorem{proposition}[theorem]{Proposition}
\theoremstyle{definition}
  \newtheorem{definition}{Definition}[section]%[subsection]
\theoremstyle{remark}
  \newtheorem{remark}{Remark}[section]%[subsection]
  \newtheorem{example}{Example}[section]%[subsection]
\crefname{lemma}{Lemma}{Lemmas}
\Crefname{lemma}{Lemma}{Lemmas}
\crefname{theorem}{Theorem}{Theorems}
\Crefname{theorem}{Theorem}{Theorems}
\crefname{corollary}{Corollary}{Corollaries}
\Crefname{corollary}{Corollary}{Corollaries}
\crefname{proposition}{Proposition}{Propositions}
\Crefname{proposition}{Proposition}{Propositions}
\crefname{section}{Section}{Sections}
\Crefname{section}{Section}{Sections}
\newenvironment{keywords}{\par\noindent\emph{Keywords: }}{}
\newenvironment{amscat}{\par\noindent\emph{AMS 2010 subject
classifications:}}{}
\title{Invariant measures, Hausdorff dimension
  and dimension drop of some harmonic measures on Galton-Watson trees}
\author{Pierre Rousselin\thanks{LAGA, University Paris 13; Labex MME-DII}}
\date{\today}
\DeclarePairedDelimiterX\intff[2]{[}{]}{#1,#2}
\DeclarePairedDelimiterX\intfo[2]{[}{)}{#1,#2}
\DeclarePairedDelimiterX\intof[2]{(}{]}{#1,#2}
\DeclarePairedDelimiterX\intoo[2]{(}{)}{#1,#2}
\DeclarePairedDelimiterX\intint[2]{\llbracket}{\rrbracket}{#1,#2}
\newcommand\N{\mathbb{N}}
\newcommand\Ne{\N^{*}}
\newcommand\R{\mathbb{R}}
\newcommand\words{\mathcal{U}}
\newcommand\infinitewords{{\words_{\infty}}}
\renewcommand\root{\text{\o}}
\newcommand\prt[2][]{{#2}_{*#1}}
\newcommand\rays[1]{\partial{#1}}
\newcommand\cyl[2][]{\left[#2\right]_{#1}}
\newcommand\noleaf{\mathscr{T}}
\newcommand\markednoleaf{\mathscr{T}_m}
\newcommand\markednoleafwithrays{\mathscr{T}_{m,r}}
\newcommand\numch{\nu}
\newcommand\pref{\preceq}
\newcommand\gcp{\wedge}
\newcommand\harm{\mathsf{HARM}}
\newcommand\unif{\mathsf{UNIF}}
\newcommand\GW{\mathbf{GW}}
\newcommand\pp{\mathbf{p}}
\DeclarePairedDelimiter{\pars}{(}{)}
\DeclarePairedDelimiter{\bracks}{[}{]}
\DeclarePairedDelimiter{\abs}{\lvert}{\rvert}
\DeclarePairedDelimiterX{\setof}[2]{\lbrace}{\rbrace}{#1\,{:}\,#2}
\newcommand\dd{\mathop{}\!\mathrm{d}}
\newcommand\indic[1]{\mathbf{1}_{\left\lbrace#1\right\rbrace}}
\DeclareMathOperator\Gammafunc{\mathsf\Gamma} %to avoid confusion with the marks
\renewcommand\P{\mathbb{P}}
\newcommand\E{\mathbb{E}}
\newcommand\et{\mathsf{et}} % exit time
\newcommand\hausd{\dim_{\mathrm{H}}}
\DeclareMathOperator\dist{d}
\DeclareMathOperator\distrays{d_{\infinitewords}}
\DeclareMathOperator\distgamma{d^\Gamma}
\DeclareMathOperator\ball{\mathscr{B}}
\DeclareMathOperator\diam{diam}
\DeclareMathOperator\hausm{\mathscr{H}}
\renewcommand{\tilde}{\widetilde}
\begin{document}
\maketitle{}
\begin{abstract}
  We consider infinite Galton-Watson trees without leaves
  together with i.i.d.~random variables called marks on each
  of their vertices. We define a class of flow rules on marked
  Galton-Watson trees for which we are able, under some algebraic
  assumptions, to build explicit invariant measures.
  We apply this result, together with
  the ergodic theory on Galton-Watson trees developed in \cite{LPP95},
  to the computation of Hausdorff dimensions of harmonic measures
  in two cases. The first one is the harmonic measure of the (transient)
  $\lambda$-biased random walk on Galton-Watson trees, for which the
  invariant measure and the dimension were not explicitly known.
  The second case is a model of random walk on a
  Galton-Watson trees with random lengths for
  which we compute the dimensions of the harmonic measure and show
  dimension drop phenomenon
  for the natural metric on the boundary
  and another metric that depends on the random lengths.
\end{abstract}

\begin{keywords}
  Galton-Watson tree, random walk, harmonic measure, Hausdorff dimension,
  invariant measure, dimension drop.
\end{keywords}
\begin{amscat}
  Primary 60J50, 60J80, 37A50;
  secondary 60J05, 60J10.
\end{amscat}

\section{Introduction}
\label{sec:intro}

Consider an infinite rooted tree $t$ without leaves.
The boundary $\rays{t}$ of the tree $t$ is the set of all rays on
$t$, that is infinite nearest-neighbour paths on $t$ that start from the root
and never backtrack.
If we equip the space $\rays{t}$ with a suitable metric $\dist$,
we may consider the Hausdorff dimension of $\rays{t}$ with respect to $\dist$.
Now assume that we run a
\emph{transient} nearest-neighbour random walk
on the vertices of $t$, starting from the root of $t$.
For any height $n \geq 0$, let $\Xi_n$ be the \emph{last} vertex
at height $n$ that is visited by the random walk.
We call $\Xi$ the \emph{exit ray}
of this random walk. The distribution of $\Xi$, which is a probability measure
on the boundary $\rays{t}$ is called the \emph{harmonic measure} associated to
the random walk.
One may define the \emph{Hausdorff dimension of this measure} to be the minimal
Hausdorff dimension of a subset of $\rays{t}$ of full measure.
When this Hausdorff dimension is strictly less than the one of the
whole boundary, we say that the \emph{dimension drop} phenomenon occurs.
Informally, this means that almost surely, the exit ray may only
visit a very small part of the boundary $\rays{t}$.

This phenomenon was first observed by Makarov in \cite{makarov1985}
in the context of a two-dimensional Brownian motion hitting a Jordan curve.
On supercritical Galton-Watson trees, Lyons, Pemantle and Peres proved the
dimension drop for the simple random walk in the seminal paper~\cite{LPP95} and
for the (transient) $\lambda$-biased random walk in~\cite{LPP_biased}.
For a more concrete interpretation of this phenomenon, see
\cite[Corollary~5.3]{LPP_biased} and \cite[Theorem~9.9]{LPP95} which show that
with high probability, the random walk is confined to an exponentially smaller
subtree.
Another application of this result is given in \cite{blps2015}, where the
authors prove a cut-off phenomenon for the mixing time of the random walk on the
giant component of an Erd\H{o}s-Rényi random graph, started from a typical
point.

An analogous asymptotic result was discovered by Curien and Le Gall
in~\cite{Curien_LeGall_harmonic} for the simple random walk on critical
Galton-Watson trees conditioned to survive at height $n$, when $n$ is large.
It was then extended to the case where the reproduction law of the Galton-Watson
tree has infinite variance by Lin~\cite{Shen_harmonic_infinite_variance}.
In these works, the asymptotic result for finite trees is (not easily)
derived from the computation of the Hausdorff dimension associated to a random
walk on an infinite tree.
This infinite tree is the scaling limit of the reduced critical Galton-Watson
tree conditioned to survive at height $n$; it can be seen as a Galton-Watson
tree with edge lengths and the simple random walk becomes a nearest-neighbour
random walk with transition probabilities inversely proportional to the edge
lengths.

One of the main goals of this work is to generalize this last model and obtain
similar results.
This will be done in \Cref{sec:reclength}.
To introduce our method, we need a little more formalism.
We work in the space of \emph{marked} trees without leaves, that is, trees $t$
with, on each vertex $x$ of $t$, an attached non-negative real number
$\gamma_t(x)$ called the mark of $x$ in $t$.
A marked Galton-Watson tree, characterized by its reproduction law
and its mark law (which may be degenerated), is a Galton-Watson tree with
i.i.d.~marks.
To a positive function $\phi$ on the space of marked tree, we may associate the
law of a random ray $\Xi$ on the tree $t$ in the following way:
we first choose a vertex $\Xi_1 = i$ of height $1$ with probability
  \[
    \P(\Xi_1 = i) = \Theta_t(i)
    \coloneqq \frac{\phi(t[i])}{ \sum_{j=1}^{\numch_t(\root)}
    \phi (t[j])},
  \]
where $\numch_t(\root)$ is the number of children of the root $\root$ and $t[i]$
is the (reindexed) marked subtree starting from $i$, see \Cref{fig:flow_chain}.
For the remaining vertices of the ray $\Xi$, we then play the same game in the
selected subtree $t[i]$.  The sequence of such selected subtrees is a Markov
chain on the space of marked trees.
Our initial measure of interest for this Markov chain is the law of a marked
Galton-Watson tree but it is rarely invariant.

% flow_chain.tikz
\begin{figure}[!ht]
\centering
\begin{tikzpicture}
  %\node (prtroot) at (0,-2) {$\prt\root$};
  \node (root) at (0,0) {$\root, \gamma_t(\root)$};
  \node (i) at (0,3) {$i, \gamma_t (i)$};
  \node (1) at (-4,3) {$1, \gamma_t (1)$};
  \node (last) at (4,3) {$\nu_t(\root),\gamma_t(\nu_t(\root))$};
  \draw (root) -- (1);
  \draw (root) -- (i);
  \draw (root) -- (last);
  %\draw (root) -- (prtroot);
  \node at (-2,3) {\dots};
  \node at (2,3) {\dots};

  \node (t1) [circle, draw=black] at (-4,6) {$t[1]$};
  \node (ti) [circle, draw=black] at (0,6) {$t[i]$};
  \node (tlast) [circle, draw=black] at (4,6) {$t[\nu_t(\root)]$};

  \draw (1) -- (-5,4);
  \node [above] at (-5,4) {\dots};
  \node at (-4,3.7) {\dots};
  \draw (1) -- (-3,4);
  \node [above] at (-3,4) {\dots};

  \draw (i) -- (-1,4);
  \node [above] at (-1,4) {\dots};
  \node at (0,3.7) {\dots};
  \draw (i) -- (1,4);
  \node [above] at (1,4) {\dots};

  \draw (last) -- (3,4);
  \node [above] at (3,4) {\dots};
  \node at (4,3.7) {\dots};
  \draw (last) -- (5,4);
  \node [above] at (5,4) {\dots};

  \node (t) [circle, draw=black] at (-3, -1) {$t$};
  \draw [-latex, dashed, out =120, in=200] (t) to (t1);
  \draw [-latex, dashed] (t) to [out = 90, in = 200] (ti);
  \draw [-latex, dashed] (t) to [out = 0, in = 10, looseness = 1.4]  (tlast);
  \node at (-4.2, 0) {$\Theta_t(1)$};
  \node at (-2.4, 0) {$\Theta_t(i)$};
  \node at (-1.4, -1.2) {$\Theta_t(\nu_T(\root))$};

  \begin{scope}[on background layer]
    \draw [dotted, fill=black!20] (-5.5,2.5) -- (-2.5,2.5) -- (-2.5,7) 
      -- (-5.5,7) -- cycle;
    \draw [dotted, fill=black!20] (-1.5,2.5) -- (1.5,2.5) -- (1.5,7) -- 
      (-1.5,7) -- cycle;
    \draw [dotted, fill=black!20] (2.5,2.5) -- (5.5,2.5) -- (5.5,7) -- 
      (2.5,7) -- cycle;
  \end{scope}
  \draw [dotted] (-7,-2) -- (7,-2) -- (7,7.2) -- (-7,7.2) -- cycle;
\end{tikzpicture}
\caption{Probability transitions of the Markov chain on the space of marked
trees associated to the flow rule $\Theta$.}
\label{fig:flow_chain}
\end{figure}
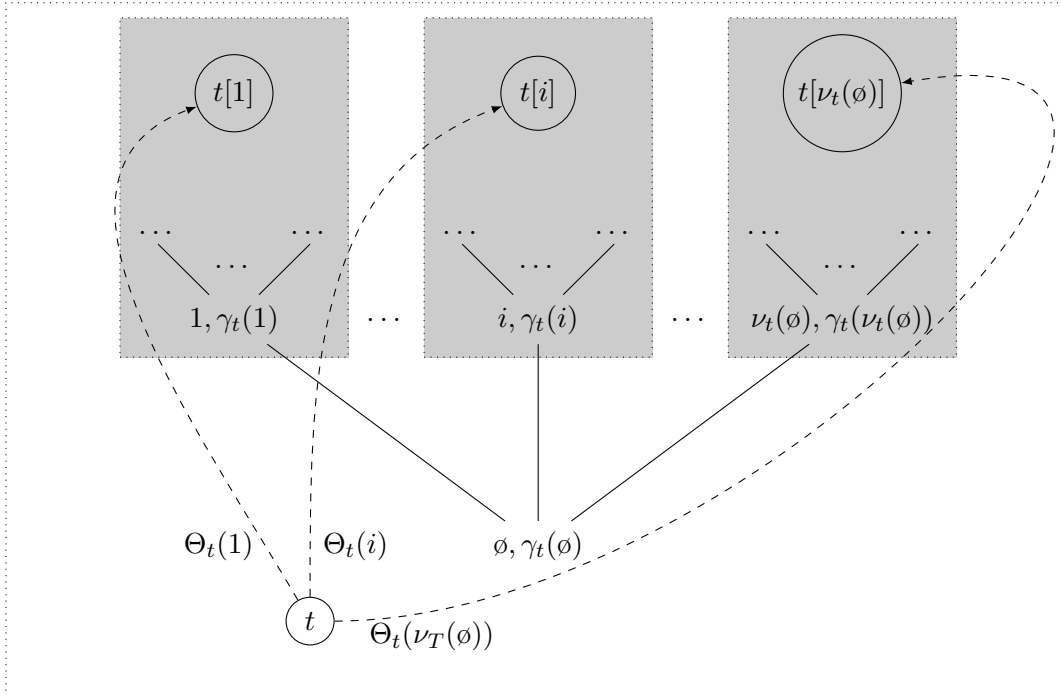
% end of flow_chain.tikz

The heart of this work is \Cref{thm:alg}, in which we give some sufficient
algebraic conditions on the function $\phi$ to build an explicit invariant
measure for this Markov chain.  Moreover, this measure is absolutely continuous
with respect to the law of the Galton-Watson tree.
These algebraic conditions were inspired by the proofs of
\cite[Proposition~25]{Curien_LeGall_harmonic} and
\cite[Proposition~8]{Shen_harmonic_infinite_variance}.
This allows us to use the ergodic theory on Galton-Watson trees
from \cite{LPP95}, to obtain quantitative and qualitative results.

In \Cref{sec:lambda}, we use this tool to study the harmonic measure of the
$\lambda$-biased random walk on supercritical Galton-Watson trees.
In \cite{LPP_biased}, the authors prove
the existence of a unique invariant probability measure $\mu_{\harm}$ absolutely
continuous with respect to the law of the Galton-Watson tree, and conclude that
there is a dimension drop phenomenon.
However, they do not (except in the case $\lambda = 1$ in \cite{LPP95})
give an explicit formula for the density of $\mu_\harm$ and
the Hausdorff dimension of the harmonic measure.
Interestingly, although we are dealing with a model of random walk on
Galton-Watson trees without marks, it is easier to express an explicit
invariant measure in the more general setting of \Cref{thm:alg}.
From the explicit invariant measure, we deduce an expression for the Hausdorff
dimension of the harmonic measure in \Cref{thm:biased}.
This invariant measure and this Hausdorff dimension were found
independently by Lin in \cite{shen_lin_harmonic_biased}.
The explicit formulas allow him to answer interesting questions, including
an open question from
\cite{LPP97}.
In this work, we use our dimension formula \eqref{eq:dlambda}
to compute numerically the dimension
(and the speed, thanks to \cite{Elie_speed}) of the $\lambda$-biased random walk
as a function of $\lambda$, for two different reproduction laws.

\Cref{sec:reclength} is completely independent from~\Cref{sec:lambda}.
Let us briefly present the model we study in it.
Let $\Gamma$ be a random variable in $\intoo{1}{\infty}$. We consider a
Galton-Watson tree $T$ with reproduction law
$\pp = (p_k)_{k \geq 0}$ such that $p_0 = 0$ and $p_1 < 1$
and attach to its vertices i.i.d.~marks
$(\Gamma_x)_{x \in T}$
with the same law as $\Gamma$.
We add an artificial parent $\prt\root$ of the root $\root$.
Then we define the length of the edge between $\root$ and $\prt\root$
as the inverse $\Gamma_\root^{-1}$ of the mark of the root.
We do the same in all the subtrees starting from the children of the root,
except that we multiply all the lengths in these subtrees by
$ (1 - \Gamma_\root^{-1})$, and we continue recursively, see
\Cref{fig:recursive_lengths}.
We call the resulting random tree a Galton-Watson tree with recursive lengths.
The $\Gamma$-height of a vertex $x$ of $T$ is the sum of the lengths of all the
edges in the shortest path between $x$ and $\prt\root$.

% file recursive_lengths.tikz
\begin{figure}[!ht]
  \centering
\begin{tikzpicture}
  \draw [dashed] (-6,0) -- (6,0);
  \draw [dashed] (-6,10) -- (6,10);
  \draw [latex-latex, dotted] (-6,0) -- (-6,10);
  \node [left] at (-6,5) {1};
  \node [below] at (0,0) {$\prt\root$};
  \draw (0,0) -- (0,4);
  \node at (0,4) {$\bullet$};
  \node [below left] at (0,4) {$\root$};
  \draw [dotted, latex-latex] (0.2, 0) -- (0.2, 4);
  \node [right] at (0.2, 2) {$\Gamma_\root^{-1}$};
  \draw [dashed] (-5,4) -- (5,4);

  \draw (-5,4) -- (-5,7);
  \node at (-5,7) {$\bullet$};
  \node [below left] at (-5,7) {$1$};
  \draw [dotted, latex-latex] (-4.8, 4) -- (-4.8, 7);
  \node [right] at (-4.8, 5.5) {$\left(1 - \Gamma_\root^{-1}\right)\Gamma_1^{-1}$};

  \draw (-5.5, 7) -- (-5.5, 8);
  \node at (-5.5, 8) {$\bullet$};
  \node [below right] at (-5.5, 8) {$11$};
  \draw (-5.7, 8) -- (-5.7, 8.8);
  \node at (-5.7, 8.8) {$\bullet$};
  \draw (-5.3, 8) -- (-5.3, 9.3);
  \node at (-5.3, 9.3) {$\bullet$};
  \draw [dashed] (-5.7, 8) -- (-5.3, 8);

  \draw (-4.8, 7) -- (-4.8, 8.5);
  \node at (-4.8, 8.5) {$\bullet$};
  \node [below right] at (-4.8, 8.5) {$12$};
  \draw (-5, 8.5) -- (-5, 9.7);
  \node at (-5, 9.7) {$\bullet$};
  \draw (-4.6, 8.5) -- (-4.6, 9.1);
  \node at (-4.6, 9.1) {$\bullet$};
  \draw [dashed] (-4.6, 8.5) -- (-5, 8.5);
  
  \node [right] at (-4.5, 7.7) {\dots};

  \draw (-3.7, 7) -- (-3.7, 8.1);
  \node at (-3.7, 8.1) {$\bullet$};
  \node [below right] at (-3.7, 8.1) {$1\nu_T(1)$};

  \node at (-3.7, 9) {\dots};

  \draw [dashed] (-5.5,7) -- (-3.7,7);

  \draw (-1, 4) -- (-1, 6);
  \node at (-1,6) {$\bullet$};
  \draw [dotted, latex-latex] (-0.8, 4) -- (-.8, 6);
  \node [right] at (-.8, 5) {$\left(1 - \Gamma_\root^{-1}\right)\Gamma_2^{-1}$};
  \node [below left] at (-1, 6) {$2$};

  \draw [dashed] (-2, 6) -- (1, 6);

  \draw (-2,6) -- (-2, 7.3);
  \node at (-2, 7.3) {$\bullet$};
  \node [below left] at (-2, 7.3) {$21$};
  \node [above] at (-2,10) 
    {$ \left(1 - \Gamma_\root^{-1}\right)
      \left(1 - \Gamma_{2}^{-1}\right)
      \Gamma_{21}^{-1}$ };
  \draw [dotted, latex-latex] (-1.8, 6) -- (-1.8, 7.3);
  \draw [-latex] (-2,10.1) to [out = 330, in = 30] (-1.8, 6.65);

    \draw (-2.3,7.3) -- (-2.3, 8.3);
    \node at (-2.3, 8.3) {$\bullet$};
    \draw (-2.1,7.3) -- (-2.1, 9);
    \node at (-2.1, 9) {$\bullet$};
    \draw (-1.9,7.3) -- (-1.9, 7.8);
    \node at (-1.9, 7.8) {$\bullet$};
    \draw (-1.7,7.3) -- (-1.7, 8.1);
    \node at (-1.7, 8.1) {$\bullet$};
    \draw [dashed] (-1.7, 7.3) -- (-2.3, 7.3);

  \draw (-0.9,6) -- (-0.9, 6.7);
  \node at (-0.9, 6.7) {$\bullet$};
  \node [below right] at (-0.9, 6.7) {$22$};

  \node at (0,6.7) {\dots};

  \draw (1,6) -- (1, 7.1);
  \node at (1, 7.1) {$\bullet$};
  \node [above left] at (1, 7.1) {$2\nu_T(2)$};

  \node at (3, 5) {\dots};

  \draw (5,4) -- (5, 8);
  \node at (5, 8) {$\bullet$};
  \draw [dotted, latex-latex] (4.8, 4) -- (4.8, 8);
  \node [left] at (4.8, 6) {$\left( 1 -
    \Gamma_\root^{-1}\right)\Gamma_{\nu_T(\root)}^{-1}$};
  \node [below right] at (5,8) {$\nu_T(\root)$};

  \draw (4, 8) -- (4,9);
  \node at (4,9) {$\bullet$};
  %\draw [dotted, ->] (3.8,8) -- (3.8, 9);
  %\node [left] at (3.8, 8.5) 
  %  {$ \left(1 - \Gamma_\root^{-1}\right)
  %    \left(1 - \Gamma_{\nu_T(\root)}^{-1}\right)
  %    \Gamma_{\nu_T \left(\root\right)1}^{-1}$ };
  \draw (3.8, 9) -- (3.8,9.5);
  \node at (3.8,9.5) {$\bullet$};
  \draw (4.2, 9) -- (4.2,9.7);
  \node at (4.2,9.7) {$\bullet$};
  \draw [dashed] (3.8,9) -- (4.2,9);

  \draw (4.7, 8) -- (4.7,8.5);
  \node at (4.7,8.5) {$\bullet$};
  \draw (4.5, 8.5) -- (4.5,9.1);
  \node at (4.5,9.1) {$\bullet$};
  \draw (4.9, 8.5) -- (4.9,9.5);
  \node at (4.9,9.5) {$\bullet$};
  \draw [dashed] (4.5,8.5) -- (4.9,8.5);
  \draw (5.4, 8) -- (5.4,9.2);
  \node at (5.4,9.2) {$\bullet$};
  \draw (6, 8) -- (6,8.7);
  \node at (6,8.7) {$\bullet$};
  \draw [dashed] (4,8) -- (6,8);

  \node at (1, 8) {\dots};
  
\end{tikzpicture}
\caption{A schematic representation of a Galton-Watson tree with recursive
lengths.}
\label{fig:recursive_lengths}
\end{figure}
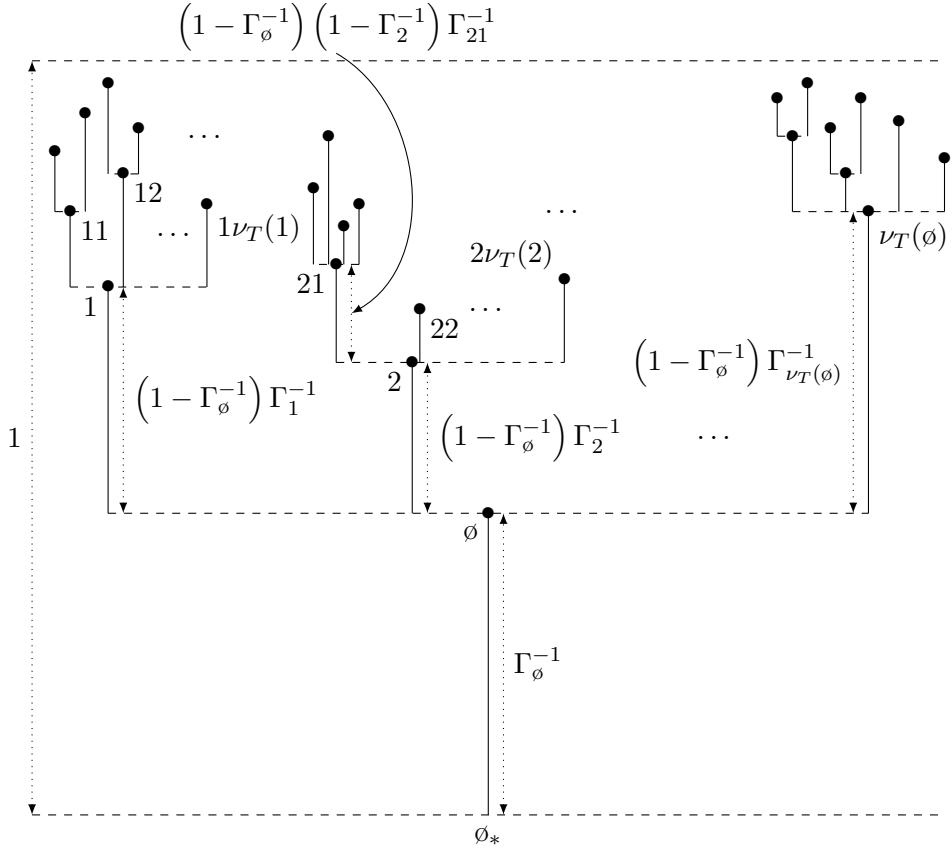
% end of file recursive_lengths.tikz

We then start a nearest-neighbour
random walk on $T$ from its root $\root$ with transition
probabilities inversely proportional to the lengths of the edges between the
current vertex and the neighbours.
The walk is reflected at $\prt\root$.
This random walk is almost surely transient, thus defines a harmonic measure on
the boundary $\rays{T}$.
We can again apply \Cref{thm:alg} to this model to find an invariant probability
measure.
This gives in \Cref{thm:reclengthd} the Hausdorff dimension of the harmonic
flow, as well as the dimension drop, for the so-called ``natural metric'' on the
boundary $\rays{T}$ which does not take into account the lengths on the tree,
only the genealogical structure.

Finally, we solve the same problem with respect to another metric on $\rays{T}$
which depends on the lengths.
We associate to our tree an \emph{age-dependent process} whose Malthusian
parameter turns out to be the Hausdorff dimension of the boundary with respect
to this distance, and prove that we still have a dimension drop phenomenon in
this context.

\paragraph{Acknowledgements.}
Part of this work was presented at the 47\textsuperscript{th} Saint-Flour Summer
School of Probability.
The author wishes to thank the organizers of this event.  The author is also
very grateful to his Ph.D.~supervisors Julien Barral and Yueyun Hu for many
interesting discussions and constant help and support and to the anonymous
referees for suggesting many improvements (and for the correction of an
uncountable amount of typos and other mistakes).
\section{Flow rules on marked Galton-Watson trees}
\label{sec:flowrules}
\subsection{Trees and flows}
Following Neveu (\cite{Neveu_gw}), we represent our trees as special subsets of
the finite words on the alphabet
$\Ne \coloneqq \{ 1, 2, \dotsc \}$.
The set of all finite words is denoted by $\words$
and equals the disjoints union
$\bigsqcup_{k = 0}^{\infty} (\Ne)^k$,
where we agree that
$(\Ne)^0 \coloneqq \{ \root \}$
is the set containing only the empty word $\root$.
The length of a word $x$ is the unique integer $k$ such that $x$ belongs to
$(\Ne)^k$ and is denoted by $\abs{x}$.
The concatenation of the words $x$ and $y$
is denoted by $x y$.
A word
$x = (x_1, x_2, \dots, x_{\abs{x}})$
is called a prefix of a word
$y = (y_1, y_2, \dots, y_{\abs{y}})$
when either $x = \root$ or
$\abs{x} \leq \abs{y}$ and $x_i = y_i$ for any $i \leq \abs{x}$.
We denote by
$\pref$ this partial order and by $x \gcp y$ the greatest common prefix of
$x$ and $y$.
The parent of a non-empty word
$x = (x_1, x_2, \dots, x_{\abs{x}})$
is
$\prt{x} \coloneqq (x_1 , x_2 , \dotsc , x_{\abs{x}-1})$ if
$\abs{x} \geq 2$;
otherwise it is the empty word $\root$.
We also say that $x$ is a child of $\prt{x}$.
A (rooted, planar, locally finite) tree $t$ is a subset of $\words$ such that
$\root \in t$ (in this context, we call $\root$ the root of $t$)
and for any $x \in t$:
\begin{itemize}
  \item if $x \neq \root$, then $\prt{x} \in t$;
  \item there exists a unique non-negative integer, denoted by $\numch_t (x)$
    and called the
    number of children of $x$ in $t$, such that for any $i \in \N$,
    $x i$ is in $t$ if and only if $1 \leq i \leq \numch_t(x)$.
\end{itemize}
The tree $t$ is endowed with the undirected graph structure obtained by drawing
an edge between each vertex and its children.
A leaf of $t$ is a vertex that has no child. In this work, we are interested in
leafless trees, that is infinite trees in which the number of children cannot be
$0$. We denote by $\noleaf$ the set of all such trees.

We will need to add an artificial parent $\prt\root$ of the root.
Let $\prt\words$ be the set
$\words \sqcup \{ \prt\root \}$ and let us agree
that $\prt\root$ is the only strict prefix of $\root$ and has length $-1$.
Likewise, for
any tree $t$ in $\noleaf$, we let
$\prt{t} \coloneqq t \sqcup \{ \prt\root \}$ and denote by
$\prt\noleaf$ the set of all such trees.

Let $\infinitewords \coloneqq (\Ne)^{\Ne}$ be the set of all infinite words.
The $k$-th truncation of an infinite word $\xi$ is the finite word composed of
its $k$ first letters and is denoted by $\xi_k$.
When a finite word $u$ is a truncation of an infinite word
$\xi$, we still say that $u$ is a prefix of $\xi$.
For two distinct infinite words $\xi$ and $\eta$, we may again consider their
greatest common prefix $\xi \wedge \eta \in \words$.
A ray in a tree $t$ is an infinite word $\xi$ such that each of its truncations
belongs to $t$.
One may also think of a ray as an infinite non-backtracking path in $t$ starting
from the root.
The set of all rays in $t$ is called the boundary of $t$ and denoted by
$\rays{t}$.

Let us denote by $\cyl[t]{x}$ the cylinder of all rays passing through $x$ in
$t$, that is the set of all rays $\xi$ in $\rays{t}$ such that
$\xi_{\abs{x}} = x$.
The boundary $\partial t$ of a tree $t$ is always endowed with the topology
generated by all the cylinders $\cyl[t]{x}$, for $x$ in $t$.

A (unit) \emph{flow} on $t$ is a function $\theta$ from $t$ to $\intff{0}{1}$, such
that $\theta(\root) = 1$ and for any $x \in t$,
\[
  \theta (x) = \sum_{i=1}^{\nu_t(x)} \theta(xi).
\]
We may define a flow $\theta_M$ on $t$ from a Borel probability measure $M$ on
the boundary $\rays{t}$ by setting $\theta_M (x) = M (\cyl[t]{x})$.
A monotone class argument shows that the mapping $M \mapsto \theta_M$ is a
one-to-one correspondance and we will abuse notations and write $\theta$ for
both the flow on $t$ and the associated Borel probability measure on $\rays{t}$.

\subsection{Hausdorff measures and dimensions on the boundary of a tree}
Consider an infinite leafless rooted tree $t$ as before, and a metric $\dist$ on
its boundary.
For $\xi$ in $\rays t$ and a non-negative number $r$, let $\ball(\xi,r)$ be the
closed ball centered at $\xi$ with radius $r$ for the metric $\dist$.
We always make the assumption that the balls of the metric space
$(\rays t, \dist)$ are exactly the cylinders $\cyl[t]{x}$, for $x$ in $t$.
An example of a metric satisfying this assumption is the \emph{natural distance}
$\distrays$ defined by,
\begin{equation}\label{eq:defdistnat}
  \forall \eta \neq \xi \in \rays t, \quad
  \distrays \left(\xi, \eta\right)
  = e^{-\abs{\xi\gcp\eta}}.
\end{equation}
We will use this metric in \Cref{sec:lambda} and the first half of
\Cref{sec:reclength}.
At the end of this paper, we will need to consider another (random) metric.

For $\delta > 0$, and a subset $E$ of $\rays t$, a $\delta$-cover of $E$ is a
denumerable family $\left(E_i\right)_{i\geq 1}$ of subsets of $\rays t$ such
that for any $i \geq 1$, the diameter $\diam E_i$ (with respect to $\dist$)
is lesser or equal to $\delta$
and $E \subset \bigcup_{i\geq 1} E_i$.
Now, let $s$ be a non-negative real number. First we define the (outer) measure
$\hausm_\delta^s$ by
\begin{equation}\label{eq:hausmdeltas}
  \hausm^s_\delta (E)
  =
  \inf
  \setof[\Big]{ \sum_{i=1}^{\infty} \diam(E_i)^{s}}{
    \left(E_i\right)_{i\geq 1} \text{is a $\delta$-cover of $E$}
  }.
\end{equation}
The $s$-dimensional Hausdorff measure of $E$ (with respect to $d$) is
\begin{equation}\label{eq:hausms}
  \hausm^s (E) =
  \lim_{\delta \to 0} \hausm^{s}_\delta(E)
  \in \intff{0}{\infty}.
\end{equation}
As usual, the Hausdorff dimension of a subset $E$ of $\rays t$ is
\[
  \hausd E
  = \inf\setof{ s \geq 0 }{\hausm^s(E) = 0}
  = \sup\setof{ s \geq 0 }{\hausm^s(E) = +\infty}.
\]

The (upper) Hausdorff dimension of a flow $\theta$
on the leafless tree $t$ is defined by
\[
  \hausd \theta \coloneqq \inf
  \setof{\dim_H (B) }{B \text{ Borel subset of } \rays{t}, \, \theta(B) = 1}.
\]
In this work, we will mainly deal with well-behaved flows that satisfy the
following (strong) property, which allows us to compute their (Hausdorff) dimensions.

\begin{definition}
  Let $\theta$ be a Borel probability measure on $(\rays t, \dist)$.
  If there exists
  $d_\theta$ in $\intff{0}{\infty}$ such that, for $\theta$-almost every
  $\xi$ in $\rays t$, the limit
  \[
    \lim_{r \downarrow 0}
    \frac{-\log\theta \ball (\xi,r)}{-\log r}
  \]
  exists and equals $d_\theta$,
  one says that the flow $\theta$ is exact-dimensional, of dimension
  $\dim^{\dist}
  \theta= d_\theta$.
\end{definition}
It is well-known in the Euclidean space that, when a Borel probability measure
$\mu$ is exact-dimensional of dimension $d_\mu$,
then all reasonable notions of dimensions coincide and are equal to $d_\mu$
(see~\cite[Theorem~4.4]{young_dimension}).
In particular, the (upper) Hausdorff dimension of $\mu$ equals its dimension.
On the boundary of a tree, with our assumption on the metric $\dist$,
this is also true (it is in fact easier than the Euclidean case).
The interested reader may want to read \cite[Section~15.4]{LyonsPeres_book} for
a simplification of \eqref{eq:hausmdeltas} on the boundary of a tree,
then
\cite{trico_straymond88} and
\cite{cutler_density} for density theorems (in \cite{trico_straymond88},
the authors consider centered covering measures as an alternative to Hausdorff
measures, but in our case, they are the same), and finally,
\cite[Proposition~10.2]{falconer_techniques}, for how to relate local and global
dimensions of a measure, using density theorems.
\begin{example}
  For the the natural distance $\distrays$, a flow $\theta$ on $t$
  is exact-dimensional of dimension $d_\theta$
  if and only if
  \begin{equation}\label{eq:hausdthetanatural}
    \text{for $\theta$-almost every $\xi$,}\quad
    \lim_{n \to \infty} \frac{1}{n} (-\log) (\theta(\xi_n))
    = d_\theta.
  \end{equation}
\end{example}

\subsection{Marked trees and flow rules}
We now introduce the marks, which can be thought of as additional data on the
vertices of the tree, in the form of non-negative real numbers.
A (leafless) \emph{marked tree} is a tree
$t \in \noleaf$ together with a collection
$\left( \gamma_t(x) \right)_{x \in t}$ of
elements of $\R_+$.

We define the (local) distance between two marked trees $t$ and $t'$ by
\[
  \dist_{m} \left(t, t'\right) =
  \sum_{n \geq 0}^{} 2^{-r-1} \delta_{n} \left(t, t'\right),
\]
where $\delta_n$ is defined by
\[
  \delta_n \left(t, t'\right)
  =
  \left\lbrace
  \begin{array}{l}
    1 \text{ if $t$ and $t'$ (without their marks)
      do not agree up to height $n$};
    \\
    \min
    \left(1,
      \sup
      \setof{\abs{\gamma_t (x) - \gamma_{t'} (x)}}{
        x \in t, \, \abs{x} \leq n
      }
    \right)
    \text{ otherwise.}
  \end{array}
  \right.
\]
It is a Polish space, which we denote by
$\markednoleaf$ and equip with its Borel $\sigma$-algebra.

For a marked tree $t$ and a vertex $x \in t$, we let
\[
  t [x] \coloneqq \setof{u \in \words}{x u \in t}
\]
be the reindexed subtree starting from $x$ with marks
\[
  \gamma_{t[x]} (y) = \gamma_t (x y), \quad \forall y \in t[x].
\]

A (consistent) \emph{flow rule} is a
measurable function $\Theta$ on a Borel subset $B$
of $\markednoleaf$ such that for any tree $t$ in $B$,
\begin{enumerate}
  \item $\Theta_t$ is a flow on $t$;
  \item for any vertex $x$ in $t$, $\Theta_t (x) > 0$;
  \item for all $x$ in $t$, $t[x]$ is in $B$ and for all
    $y$ in $t[x]$,
    $\Theta_{t[x]} (y) = \Theta_{t}(x y) / \Theta_t(x)$.
\end{enumerate}
Let $\mu$ be a Borel probability measure on $\markednoleaf$.
We say that $\Theta$ is a $\mu$-flow rule if $\mu(B) = 1$.
When the $\mu$-flow rule does not depend on the marks, it is essentially the same
notion of flow rule as in \cite{LPP95} and \cite[chapter~17]{LyonsPeres_book},
except that we allow it to only be defined on a set of full measure and
that we impose that $\Theta_t(x) > 0$ for any tree $t$ and any vertex
$x$ in $t$.
These choices were made in order to simplify the statements of the next
subsection.

The set of all marked trees with a distinguished ray is denoted by
\[
  \markednoleafwithrays \coloneqq
  \setof{ \left(t, \xi\right) }{t \in \markednoleaf, \, \xi \in \rays{t}}.
\]
It is endowed with the same distance as before except that for $n \geq 0$, we
impose that
$\delta_n \left(  \left(t,\xi\right), \left(t', \xi'\right)\right)$ is equal to
$1$ if the two distinguished rays
$\xi$ and $\xi'$ do not agree up to height $n$.
It is again a Polish space.
The \emph{shift} $S$ on this space is defined by
\[
  S ( t, \xi ) = (t[\xi_1], S\xi ),
\]
where
$S\xi$ is the infinite word obtained by removing the first letter of $\xi$.

Let $\mu$ be a Borel probability measure on $\markednoleaf$ and $\Theta$ be a
$\mu$-flow rule. We build a probability measure on $\markednoleafwithrays$,
denoted by
$\mu \ltimes \Theta$, or $\E_\mu$ when the $\mu$-flow rule $\Theta$ is fixed,
by first picking a tree $T$ at random according to $\mu$,
and then choosing a ray $\Xi$ of $T$ according to the distribution $\Theta_T$.

The sequence of trees $(T [\Xi_n] )_{n \geq 0}$ is then a
discrete time Markov chain with values in $\markednoleaf$, initial distribution
$\mu$ and transition kernel $P_\Theta$ given by
\[
  P_\Theta (T, T') =
  \sum_{i=1}^{\numch_T(\root)} \Theta_T(i) \indic{T' = T[i]}.
\]
If the law of $T[\Xi_1]$ is still $\mu$, we say that $\mu$
is a $\Theta$-invariant measure.
When $\mu$ is $\Theta$-invariant (for the Markov chain), the system
$ (\markednoleafwithrays, \mu \ltimes \Theta, S)$ is
measure-preserving.

\subsection{Ergodic theory on marked Galton-Watson trees}

We now consider a reproduction law $\pp = (p_k)_{k \geq 0}$, that is
a sequence of non-negative real numbers such that
$\sum_{k=0}^{\infty} p_k = 1$,
together with a random variable $\Gamma$ with values in $\R_+$.
We will assume throughout this work that $p_0 = 0$, $p_1 < 1$ and
the expectation
$m \coloneqq \sum_{k=0}^{\infty}kp_k$
is finite.

A $(\Gamma, \pp )$-Galton-Watson tree is a random tree $T$ such that
the root $\root$ has $k$ children with probability $p_k$, has a mark
$\gamma_T(\root) = \Gamma_\root$ with the same law as $\Gamma$ and,
conditionally on the event that the
root has $k$ children, the trees $T[1]$, $T[2]$, \dots,
$T[k]$ are independent and are $(\Gamma, \pp)$-Galton-Watson trees.
In particular, all the marks are i.i.d. We denote them
by $ (\Gamma_x)_{x \in T}$.
The distribution of a $(\Gamma, \pp)$-Galton-Watson on
$\markednoleaf$ is denoted by $\GW$.
The branching property is still valid in this setting of marked trees: if $T$ is
a random marked tree distributed as $\GW$, then for all integers $k \geq 1$,
Borel sets $A_1$, $A_2$, \dots, $A_k$ of $\markednoleaf$, and Borel sets $B$
of $\R_+$,
\[
  \P \bigl(
    \nu_T (\root) = k, \, \Gamma_\root \in B, \,
    T[1] \in A_1, \dotsc, T [k] \in A_k
  \bigr)
  =
  p_k \P \bigl(\Gamma \in B \bigr)
  \prod_{i=1}^{k} \GW \bigl(A_i\bigr).
\]

We now recall the main ingredients (all of which will be used throughout this
work) of the ergodic theory on Galton-Watson trees developed in
\cite[Section~5]{LPP95} (see also Section~17.5 of \cite{LyonsPeres_book}).
Since our marked Galton-Watson trees still satisfy the branching property, all
these results still apply in our setting of marked trees.
Remark that in the two following statements, the omission of the extra
hypotheses in \cite[Section~5]{LPP95} is due to our slightly modified
definition of a flow rule (we impose that for $\GW$-almost any tree $t$, we have
$\Theta_t(x) > 0$ for all $x$ in $t$).
\begin{lemma}[{\cite[Proposition~5.1]{LPP95}}]
  \label{prop:differentflow} 
  Let $\Theta$ and $\Theta'$ be two $\GW$-flow rule. Then
  $\Theta_T$ and $\Theta'_T$ are either almost surely equal or almost surely different.
\end{lemma}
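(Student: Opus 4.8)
The plan is to prove a zero--one law by exploiting the self-similarity of $\GW$ together with the consistency axiom of flow rules (the relation $\Theta_{t[x]}(y) = \Theta_t(xy)/\Theta_t(x)$). Write $A \coloneqq \{t : \Theta_t = \Theta'_t\}$ for the event that the two flows coincide, viewed either as functions on the vertex set or as the associated measures on the boundary, and set $q \coloneqq \GW(A)$. Since both flow rules are defined $\GW$-almost everywhere and the third axiom guarantees that $t[x]$ stays in the common domain whenever $t$ is, $A$ is a well-defined Borel event (a countable intersection of the events $\{\Theta_T(x) = \Theta'_T(x)\}$ over $x \in \words$) and $q$ is meaningful. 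The goal is to show $q \in \{0,1\}$; the claim then reads off, with $q = 1$ meaning ``almost surely equal'' and $q = 0$ meaning ``almost surely different''.

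First I would establish the inclusion $A \subseteq \bigcap_{i=1}^{\nu_T(\root)} \{T[i] \in A\}$. If $\Theta_T = \Theta'_T$, then in particular $\Theta_T(i) = \Theta'_T(i)$ for every child $i$ of the root; these values are strictly positive (positivity axiom), so I may divide, and the consistency relation applied to both $\Theta$ and $\Theta'$ gives $\Theta_{T[i]}(y) = \Theta_T(iy)/\Theta_T(i) = \Theta'_T(iy)/\Theta'_T(i) = \Theta'_{T[i]}(y)$ for all $y \in T[i]$. Hence $\Theta_{T[i]} = \Theta'_{T[i]}$, that is $T[i] \in A$. The crucial point is that the event $\{T[i] \in A\}$ depends only on the marked subtree $T[i]$.

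Next I would feed this into the branching property. Conditionally on $\nu_T(\root) = k$ (and on the root mark), the subtrees $T[1], \dots, T[k]$ are i.i.d.\ with law $\GW$, so the events $\{T[i] \in A\}$ are independent, each of probability $q$. Together with the inclusion above this yields
\[
  \GW\bigl(A \cap \{\nu = k\}\bigr)
  \le \GW\!\left(\{\nu = k\} \cap \bigcap_{i=1}^{k} \{T[i] \in A\}\right)
  = p_k\, q^k,
\]
and summing over $k$ gives $q \le g(q)$, where $g(s) = \sum_{k} p_k s^k$ is the offspring generating function.

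Finally I would close with an elementary analysis of $g$. Because $p_0 = 0$, one has $g(s) = \sum_{k \ge 1} p_k s^k \le s$ for $s \in [0,1]$; for $s \in (0,1)$ this is strict unless $p_k = 0$ for all $k \ge 2$, which would force $p_1 = 1$. Since $p_1 < 1$, we get $g(s) < s$ on $(0,1)$, so the only fixed points of $g$ in $[0,1]$ are $0$ and $1$. Combined with $q \le g(q)$ this forces $g(q) = q$, whence $q \in \{0,1\}$. I expect the only delicate point to be the bookkeeping of the first step---checking that the consistency axiom genuinely transports equality of the two flows down to every subtree, with positivity licensing the division---after which the one-sided generating-function inequality and its resolution are routine. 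Note in particular that only one inclusion is needed: the event $A$ demands strictly more than all subtrees lying in $A$ (it also fixes the root split $\Theta_T(i) = \Theta'_T(i)$), but the inequality $q \le g(q)$ suffices.
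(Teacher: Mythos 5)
Your proof is correct. The paper itself offers no argument for this lemma: it is quoted from \cite{LPP95} (Proposition~5.1), and your zero--one argument --- agreement of the two flows propagates to the root subtrees, the branching property then gives $q \le \sum_k p_k q^k = g(q)$, and $p_0 = 0$, $p_1 < 1$ force $g(s) < s$ on $\intoo{0}{1}$, whence $q \in \{0,1\}$ --- is essentially the standard proof behind that citation, so nothing is missing. One point worth making explicit: the single place where your argument uses this paper's conventions rather than those of \cite{LPP95} is the division by $\Theta_T(i)$. Under the definition of flow rule adopted here, $\Theta_t(x) > 0$ for every vertex, and this is precisely why the author can state the lemma without the extra hypotheses of \cite{LPP95} (see the remark just before the statement); without positivity, the inclusion $A \subseteq \bigcap_{i} \{T[i] \in A\}$ could fail, since on a child carrying zero flow the subtree flows $\Theta_{T[i]}$ and $\Theta'_{T[i]}$ are unconstrained by the equality of the full-tree flows. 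Your closing remark that only the one-sided inclusion is needed is also right; alternatively one can iterate it to get $q \le \E\bracks*{q^{Z_n}} \to 0$ whenever $q < 1$, since $Z_n \to \infty$ almost surely when $p_0 = 0$ and $p_1 < 1$, but your one-step fixed-point argument is cleaner.
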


\begin{theorem}
  [{\cite[Proposition~5.2]{LPP95}}]
  \label{prop:flow_fondamental}
  Assume there exists a $\Theta$-\emph{stationary} probability measure
  $\mu_{\Theta}$ which is
  absolutely continuous with respect to $\GW$. Then $\mu_{\Theta}$ is
  equivalent to $\GW$ and the associated measure-preserving system
  is ergodic. Moreover, the probability measure $\mu_\Theta$ is the only
  $\Theta$-stationary probability
  measure which is absolutely continuous with respect to $\GW$.
  The flow $\Theta_T$ is almost surely exact-dimensional, and its dimension,
  with respect to the metric $\distrays$ is
  \begin{equation}\label{cor:hausdflow}
    \dim^{\distrays} \Theta_T =
    \E_{\mu_\Theta} \bracks*{ -\log \Theta_T \left(\Xi_1\right) }.
  \end{equation}
\end{theorem}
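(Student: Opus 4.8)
The plan is to treat ergodicity of the measure-preserving system $(\markednoleafwithrays, \mu_\Theta\ltimes\Theta, S)$ as the engine from which every assertion follows, and to establish that ergodicity through a zero--one law rooted in the branching property. Granting ergodicity for the moment, I would first deduce the dimension formula, since that is the concrete output used later in the paper.

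For the dimension, the key is a telescoping identity coming from the consistency condition~(3) in the definition of a flow rule. Writing $(S^{k}\xi)_1$ for the first vertex of the shifted ray, consistency gives $\Theta_{t[\xi_k]}\pars{(S^{k}\xi)_1} = \Theta_t(\xi_{k+1})/\Theta_t(\xi_k)$, with the convention $\Theta_t(\xi_0)=\Theta_t(\root)=1$. Hence, setting $g(t,\xi)\coloneqq\Theta_t(\xi_1)$ and $\Phi\coloneqq -\log g$,
\[
  -\log\Theta_t(\xi_n) = \sum_{k=0}^{n-1}\Phi\pars{S^{k}(t,\xi)},
\]
so that $-\log\Theta_T(\Xi_n)$ is a Birkhoff sum over $S$. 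Since a flow value never exceeds $\Theta(\root)=1$ we have $\Phi\geq 0$, and Birkhoff's ergodic theorem for nonnegative observables applies \emph{without} any integrability hypothesis: by ergodicity the limit is the constant $\E_{\mu_\Theta}\bracks{\Phi}=\E_{\mu_\Theta}\bracks{-\log\Theta_T(\Xi_1)}\in\intff{0}{\infty}$. Thus $\tfrac1n\pars{-\log\Theta_T(\Xi_n)}$ converges $\mu_\Theta\ltimes\Theta$-almost surely to this constant, which by Fubini means that for $\mu_\Theta$-almost every $T$ the flow $\Theta_T$ is exact-dimensional with the dimension described in~\eqref{eq:hausdthetanatural}. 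The equivalence $\mu_\Theta\sim\GW$ (below) then upgrades ``$\mu_\Theta$-almost every $T$'' to ``$\GW$-almost every $T$'', i.e. to an almost sure statement.

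I expect the main obstacle to be ergodicity itself. I would start from an $S$-invariant event $A$ and use its invariance to decompose the fibre $A_t=\setof{\xi}{(t,\xi)\in A}$ as the disjoint union, over $i$, of the cylinders $\cyl[t]{i}$ prolonged by $A_{t[i]}$; combined with consistency this shows that $p(t)\coloneqq\Theta_t(A_t)$ satisfies $p(t)=\sum_{i=1}^{\nu_t(\root)}\Theta_t(i)\,p(t[i])$, so $p$ is a bounded $P_\Theta$-harmonic function and $p(T[\Xi_n])$ is a bounded martingale. The crux is to show $p\in\{0,1\}$ almost surely, and it is here that the branching property enters decisively: under $\GW$ the subtrees $t[i]$ are i.i.d.\ copies of $\GW$, so the self-averaging relation above, together with this independence, yields a Kolmogorov-type zero--one law for the $S$-invariant $\sigma$-field under $\GW\ltimes\Theta$ (this is exactly the content of \cite[Proposition~5.2]{LPP95}, whose hypotheses hold because our marked Galton--Watson measure still satisfies the branching property). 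Consequently $\GW\ltimes\Theta(A)\in\{0,1\}$, and since $\mu_\Theta\ltimes\Theta\ll\GW\ltimes\Theta$ this forces $\mu_\Theta\ltimes\Theta(A)\in\{0,1\}$, which is the desired ergodicity.

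Finally, the same zero--one law yields equivalence and uniqueness. For equivalence I would show, using stationarity and the branching property, that the support $\setof{t}{f(t)>0}$ of $\mu_\Theta$ (with $f=\dd\mu_\Theta/\dd\GW$) is invariant: stationarity gives $f(T[\Xi_1])>0$ almost surely, so this event is forward-invariant, and the zero--one law then forces it to have $\GW$-measure $0$ or $1$; as it carries the probability measure $\mu_\Theta$ it has positive, hence full, $\GW$-measure, whence $\GW\ll\mu_\Theta$. For uniqueness, if $\mu_\Theta$ and $\mu'_\Theta$ are both $\Theta$-stationary and absolutely continuous with respect to $\GW$, then so is their average $\tfrac12(\mu_\Theta+\mu'_\Theta)$, and the system it generates is ergodic by the argument above; but an ergodic invariant measure is an extreme point of the simplex of $S$-invariant measures and cannot be a nontrivial convex combination of the distinct measures $\mu_\Theta\ltimes\Theta$ and $\mu'_\Theta\ltimes\Theta$, so $\mu_\Theta=\mu'_\Theta$. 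In short, I anticipate the branching-property zero--one law to be the only genuinely delicate point, the remaining work being bookkeeping with Birkhoff's theorem, the consistency relation, and the extremality of ergodic measures.
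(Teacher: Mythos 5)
Your proposal is correct and takes essentially the same approach as the paper: the paper offers no independent proof of this theorem, importing it wholesale from \cite[Proposition~5.2]{LPP95} with the single observation---exactly the one you make---that the branching property persists for marked Galton--Watson trees, so the original argument transfers verbatim. The mechanics you reconstruct (the harmonic function $p(t)=\Theta_t(A_t)$ and its bounded martingale $p(T[\Xi_n])$ reduced to a branching-property zero--one law for ergodicity, the consistency telescoping $-\log\Theta_t(\xi_n)=\sum_{k=0}^{n-1}-\log\Theta_{t[\xi_k]}\bigl((S^k\xi)_1\bigr)$ fed into Birkhoff's theorem for nonnegative observables for the dimension formula, and extremality of ergodic measures for uniqueness) are precisely those of the cited proof, the only cosmetic imprecision being that the zero--one law in question is the one for inherited tree events derived from the branching property (your set $\{f>0\}$ is inherited rather than $S$-invariant), not a triviality statement for the invariant $\sigma$-field under $\GW\ltimes\Theta$ itself.
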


The ergodic theory Lemma~{6.2} in \cite{LPP95} (see also
\cite[Lemma~17.20]{LyonsPeres_book}) will be used several times.
We recall it for the reader's convenience.
\begin{lemma}\label{lem:classic}
  If $S$ is a measure-preserving transformation on a probability space, $g$ is
  a finite and measurable function from this space to $\R$,
  and $g - Sg$ is bounded from below by an integrable
  function, then $g - Sg$ is integrable with integral $0$.
\end{lemma}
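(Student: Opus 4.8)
The plan is to write $Sg = g \circ S$ and set $h \coloneqq g - Sg$, then prove simultaneously that $h$ is integrable and that $\int h \dd\mu = 0$, where $\mu$ denotes the probability measure preserved by $S$. The whole difficulty is that $g$ is not assumed integrable, so I cannot directly invoke the measure-preservation identity $\int g \dd\mu = \int Sg \dd\mu$, which may be meaningless; equality of integrals becomes available only after truncation. I would therefore fix, for each integer $N \geq 1$, the truncation $g_N \coloneqq (g \wedge N) \vee (-N)$ and work with $h_N \coloneqq g_N - (Sg)_N$, the idea being to establish $\int h_N \dd\mu = 0$ for every $N$ and then pass to the limit $N \to \infty$.

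Two elementary observations drive the argument. First, truncation commutes with composition, so $(g_N) \circ S = (Sg)_N$; since each $g_N$ is bounded (hence integrable) and $S$ preserves $\mu$, this gives $\int g_N \dd\mu = \int (Sg)_N \dd\mu$, that is $\int h_N \dd\mu = 0$ for all $N$. Second, because $g$ (and therefore $Sg$) is finite, $g_N \to g$ and $(Sg)_N \to Sg$ pointwise, so $h_N \to h$ pointwise as $N \to \infty$.

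The key step is the pointwise sandwich $-h^- \leq h_N \leq h^+$, valid for every $N$. Fixing a point and writing $u = g$, $v = Sg$ there, this reads $\min(u - v, 0) \leq u_N - v_N \leq \max(u - v, 0)$, with $u_N, v_N$ the truncations of $u, v$. I would prove it by a short case split on the sign of $u - v$, using that both $t \mapsto t_N$ and $t \mapsto t - t_N$ are nondecreasing. Granting this sandwich, the hypothesis that $g - Sg$ dominates an integrable function $\psi$ gives $h \geq \psi$, whence $h^- \leq \psi^-$ is integrable; applying Fatou's lemma to the nonnegative functions $h_N + h^- \to h + h^- = h^+$ yields $\int h^+ \dd\mu \leq \liminf_N \int (h_N + h^-) \dd\mu = \int h^- \dd\mu < \infty$. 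Thus $h \in L^1$.

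Finally, once $h$ is integrable the sandwich gives $\abs{h_N} \leq \abs{h} \in L^1$, so dominated convergence lets me pass to the limit in $\int h_N \dd\mu = 0$ and conclude $\int h \dd\mu = 0$. The main obstacle is really the pointwise sandwich inequality: it is the only nontrivial ingredient, and it is precisely what couples the two truncations $g_N$ and $(Sg)_N$ so that their (equal) integrals transfer to $h$ in the limit. I would also stress that the integrable lower bound is used in exactly one place — to make $h^-$ integrable so that Fatou applies — and cannot be dispensed with, since otherwise $h^+$ and $h^-$ could both be infinite and the conclusion would fail.
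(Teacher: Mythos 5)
Your proof is correct, but note that the paper does not actually prove this lemma: it quotes it from \cite[Lemma~6.2]{LPP95} (see also \cite[Lemma~17.20]{LyonsPeres_book}), and the argument given there is genuinely different from yours. The classical proof sets $h \coloneqq g - Sg$ and telescopes the Birkhoff sums, $\frac1n\sum_{k=0}^{n-1} h\circ S^k = \frac1n\left(g - g\circ S^n\right)$; since $g$ is finite and $S$ preserves the measure, $g\circ S^n/n \to 0$ in probability (and $g/n \to 0$ a.e.), while the pointwise ergodic theorem in its extended form for functions with integrable negative part --- which is exactly where the integrable lower bound enters there --- shows that the averages converge a.e.\ to $\E\left[h \mid \mathcal{I}\right]$, forcing $\E\left[h \mid \mathcal{I}\right] = 0$ a.e.\ and hence $\int h \dd\mu = 0$. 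Your route replaces the ergodic theorem by the symmetric truncations $g_N$: the commutation $(g_N)\circ S = (Sg)_N$ is right, the sandwich $-h^- \le h_N \le h^+$ does follow from your case split (monotonicity of $t \mapsto t_N$ gives the inequality on the side where $h_N$ and $h$ share a sign, monotonicity of $t \mapsto t - t_N$ gives the $1$-Lipschitz bound on the other side), the hypothesis yields $h^- \le \psi^- \in L^1$ so that Fatou applies to $h_N + h^- \ge 0$ and gives $h^+ \in L^1$, and then $\abs{h_N} \le \abs{h}$ justifies dominated convergence in $\int h_N \dd\mu = 0$. Every step checks out. Comparing the two: the Birkhoff argument is shorter if one already has the extended ergodic theorem at hand, and it yields the stronger conditional conclusion $\E\left[h \mid \mathcal{I}\right] = 0$ with respect to the invariant $\sigma$-field; your argument gives only the unconditional integral, but it is completely elementary --- truncation, Fatou, dominated convergence --- and requires no ergodic-theoretic input beyond measure preservation itself.
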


The uniform flow rule $\unif$ is defined as in \cite{LPP95}~Section~6; it does
not depend on the marks. For any tree $t$, we denote
\[
  Z_n (t) \coloneqq \# \setof{x \in t}{\abs{x} = n}.
\]
Let $T$ be a $(\Gamma,\pp)$-Galton-Watson tree.
By the Seneta-Heyde theorem, there exists a sequence of positive real numbers
$(c_n)_{n \geq 0}$
which only depends on $\pp$ such that almost surely
\[
  \tilde{W} \left(T \right) \coloneqq
  \lim_{n \to \infty} \frac{Z_n (T)}{c_n}
\]
exists in $\intoo{0}{\infty}$. Furthermore,
$\lim_{n \to \infty} c_{n+1}/c_n = m$.
Hence we may define the $\GW$-flow rule $\unif$ by
\[
  \unif_T (i) =
  \frac{
    \tilde{W} (T[i])}{
    \sum_{j=1}^{\nu_t(\root)} \tilde{W} (T[j])
  },
  \quad \text{for } j = 1,2,\dotsc,\nu_t(\root)
\]
and $\tilde{W}$ almost surely
satisfies the recursive equation
\begin{equation}\label{eq:recunif}
  \tilde{W}(T) = \frac{1}{m}
    \sum_{j=1}^{\nu_t(\root)} \tilde{W} (T[i]).
\end{equation}

Theorem~{7.1} in \cite{LPP95} is still valid with the same proof in our
setting of marked trees.
\begin{proposition}[Dimension drop for non uniform flow rules,
  {\cite[Theorem~7.1]{LPP95}}]
  \label{prop:dimensiondrop}
  With the same hypotheses as \Cref{prop:flow_fondamental}, if
  $\P \pars*{\Theta_T = \unif_T} < 1$,
  then, almost surely $\dim \Theta_T$ is strictly less
  than $\log m$, which equals $\hausd \rays{T}$.
\end{proposition}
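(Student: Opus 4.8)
The plan is to use the dimension formula \eqref{cor:hausdflow} to write $\dim^{\distrays}\Theta_T = \E_{\mu_\Theta}\bracks*{-\log\Theta_T(\Xi_1)}$ and to compare it with the analogous quantity for $\unif$, which I will show equals $\log m$; the difference will be a relative entropy that is strictly positive exactly under the stated hypothesis. Since \Cref{prop:flow_fondamental} already provides that $\Theta_T$ is almost surely exact-dimensional with this deterministic value, it suffices to show that this value is strictly below $\log m = \hausd\rays{T}$, the known dimension of the whole boundary.

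First I would establish that $\E_{\mu_\Theta}\bracks*{-\log\unif_T(\Xi_1)} = \log m$. For this I apply \Cref{lem:classic} to the measure-preserving system $(\markednoleafwithrays, \mu_\Theta\ltimes\Theta, S)$ with $g(t,\xi) = \log\tilde{W}(t)$, which is finite and measurable since $\tilde{W}\in\intoo{0}{\infty}$ almost surely. The recursion \eqref{eq:recunif} gives $\unif_t(\xi_1) = \tilde{W}(t[\xi_1])/(m\,\tilde{W}(t))$, so that $g - Sg = -\log m - \log\unif_t(\xi_1)$. Because $\unif_t(\xi_1)\le 1$, this is bounded below by the constant $-\log m$, hence by an integrable function, and \Cref{lem:classic} yields $\E_{\mu_\Theta}\bracks*{g - Sg}=0$, which is the claim.

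Next I would carry out the comparison vertex by vertex. For each tree $T$, both $(\Theta_T(i))_{i\le\nu_T(\root)}$ and $(\unif_T(i))_{i\le\nu_T(\root)}$ are probability vectors with finitely many strictly positive entries, so the pointwise identity
\[
  \sum_i \Theta_T(i)\bracks*{-\log\Theta_T(i)} = \sum_i \Theta_T(i)\bracks*{-\log\unif_T(i)} - D_T
\]
holds with all terms finite, where $D_T \coloneqq \sum_i \Theta_T(i)\log\pars*{\Theta_T(i)/\unif_T(i)}\ge 0$ is the relative entropy (Gibbs' inequality). Integrating against $\mu_\Theta$ and inserting the previous step gives $\dim^{\distrays}\Theta_T = \log m - \E_{\mu_\Theta}\bracks*{D_T}$, so it remains to prove $\E_{\mu_\Theta}\bracks*{D_T}>0$.

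The heart of the argument, and the step I expect to be the main obstacle, is to deduce $\E_{\mu_\Theta}\bracks*{D_T}>0$ from $\P(\Theta_T=\unif_T)<1$. Since $D_T>0$ precisely when the root splitting distributions $(\Theta_T(i))_i$ and $(\unif_T(i))_i$ differ, and $\mu_\Theta$ is equivalent to $\GW$ by \Cref{prop:flow_fondamental}, it suffices to show that these differ with positive probability. The dichotomy \Cref{prop:differentflow} together with the hypothesis forces $\Theta_T\neq\unif_T$ \emph{as flows} almost surely, but this is a global statement whereas $D_T$ only sees the root. I would argue the contrapositive: if $\Theta_T(i)=\unif_T(i)$ for all $i$ almost surely, then the consistency relation $\Theta_T(yi)=\Theta_T(y)\,\Theta_{T[y]}(i)$ and its analogue for $\unif$, combined with the fact that under $\GW$ each subtree $T[y]$ is again $\GW$-distributed (so root-agreement passes simultaneously to all subtrees by a countable union), let one propagate the equality upward by induction on the height and conclude $\Theta_T=\unif_T$ as flows almost surely, contradicting the dichotomy. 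Hence the root distributions disagree on a set of positive measure, $\E_{\mu_\Theta}\bracks*{D_T}>0$, and therefore $\dim^{\distrays}\Theta_T<\log m$ almost surely. The delicate point throughout is this passage from the global ``the flows differ'' provided by the dichotomy to the local ``the root splittings differ,'' which is the quantity actually controlled by the entropy computation.
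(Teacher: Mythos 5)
Your proof is correct and is essentially the paper's own argument: the paper omits the proof, invoking \cite[Theorem~7.1]{LPP95} as valid verbatim for marked trees, and that proof is precisely your route --- computing $\E_{\mu_\Theta}\bracks*{-\log \unif_T(\Xi_1)} = \log m$ via \Cref{lem:classic} applied to $g = \log\tilde{W}$ and the recursion \eqref{eq:recunif}, splitting off the root relative entropy by Gibbs' inequality, and deducing its strict positivity from the dichotomy of \Cref{prop:differentflow} by propagating root-splitting equality to all subtrees through the branching property. Your handling of the one delicate point (passing from the global ``flows differ'' to the local ``root splittings differ'' by the countable-union argument over vertices) is exactly how the cited proof closes the gap, so there is nothing to add.
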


\section{Invariant measures for a class of flow rules}
\label{sec:invariant}
Let $T$ be a $\left(\Gamma, \pp\right)$-Galton-Watson tree.
Let $\phi$ be a measurable positive function on a Borel set
of $\markednoleaf$ which has full $\GW$-measure.
We define a $\GW$-flow rule $\Theta$ by
\[
  \Theta_T ( i )
  =
  \frac{
    \phi (T[i])}{
    \sum_{j=1}^{\nu_T(\root)} \phi(T[j])
  },
  \quad \forall 1 \leq i \leq \numch_T(\root).
\]
\begin{lemma}\label{lem:equalflows}
  Let $\Theta$ and $\Theta'$ be two flow rules defined respectively by $\phi$
  and $\phi'$ as above. Then, $\Theta_T = \Theta'_T$ almost surely if and only
  if the functions $\phi$ and $\phi'$ are proportional.
\end{lemma}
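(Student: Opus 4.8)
The plan is to establish the two implications separately; the forward one is a one-line normalization check and the converse carries the content.

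For the easy direction, assume $\phi$ and $\phi'$ are proportional, say $\phi = c\phi'$ for a constant $c>0$ ($\GW$-almost surely). Then for almost every $T$ and every child $i$ of the root the factor $c$ cancels between numerator and denominator, so $\Theta_T(i) = \Theta'_T(i)$; by the consistency property~(3) in the definition of a flow rule, equality at the first level propagates recursively to every vertex $x \in T$, whence $\Theta_T = \Theta'_T$ almost surely.

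For the reverse direction I would introduce the positive, $\GW$-almost everywhere defined ratio $\psi \coloneqq \phi/\phi'$. Equality of the flows $\Theta_T = \Theta'_T$ in particular forces equality of their values at height one, so for $\GW$-almost every $T$ and all children $i,j$ of $\root$ one has $\phi(T[i])/\phi(T[j]) = \Theta_T(i)/\Theta_T(j) = \Theta'_T(i)/\Theta'_T(j) = \phi'(T[i])/\phi'(T[j])$, which rearranges to $\psi(T[i]) = \psi(T[j])$. Thus $\psi$ is almost surely constant across the subtrees hanging from the root, and only this first-level consequence of flow equality is needed. I would then invoke the branching property: since $p_0 = 0$ and $p_1 < 1$, the event $\{\nu_T(\root) \ge 2\}$ has positive probability, and conditionally on it $T[1]$ and $T[2]$ are independent, each distributed as $\GW$. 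Hence $\psi(T[1])$ and $\psi(T[2])$ are two independent copies of the random variable $\psi(T)$, and by the previous sentence they coincide almost surely. It then remains to use the elementary fact that an i.i.d. pair supported on the diagonal forces the common law $\mu_\psi$ of $\psi(T)$ to be a single Dirac mass: writing $\P(\psi(T[1]) = \psi(T[2])) = \int \mu_\psi(\{y\})\, \dd\mu_\psi(y) = \sum_{a} \mu_\psi(\{a\})^2 \le \max_a \mu_\psi(\{a\}) \le 1$, where $a$ ranges over the atoms, equality to $1$ can hold only if some atom has full mass. Therefore $\psi = c$ $\GW$-almost surely for some constant $c > 0$, i.e. $\phi = c\phi'$, which is the desired proportionality.

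I expect the only step needing genuine care to be this last probabilistic one: correctly extracting, from the branching property, that $T[1]$ and $T[2]$ are conditionally i.i.d. copies of $\GW$ on a positive-probability event, and then the concentration-on-the-diagonal computation that forces $\mu_\psi$ to be degenerate. Everything else is formal; in particular no appeal to the dichotomy of \Cref{prop:differentflow} or to the ergodic machinery is required, the identity at the root alone being enough.
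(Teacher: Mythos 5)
Your proof is correct and is essentially the argument the paper relies on: the paper omits the proof by citing \cite[Proposition~8.3]{LPP95}, which proceeds in exactly this way, extracting from the height-one identity that $\psi(T[1])=\psi(T[2])$ for $\psi=\phi/\phi'$, then using the branching property (with $p_0=0$ and $p_1<1$ guaranteeing $\nu_T(\root)\geq 2$ with positive probability, and the subtrees conditionally i.i.d.\ with law $\GW$) to conclude that two independent copies concentrated on the diagonal force the law of $\psi(T)$ to be a Dirac mass. Your diagonal computation via atoms is a valid (and standard) way to make that last degeneracy step precise, and the easy direction is handled correctly as well.
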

\begin{proof}
  This is the same proof (except the last sentence) as
  \cite[Proposition~8.3]{LPP95}, so we omit it.
\end{proof}
We now assume that the marks and the function $\phi$ have their values in the
same sub-interval $J$ of $\intoo{0}{\infty}$ and that there exists a measurable
function $h$ from $J \times J$ to $J$ such that, almost surely,
\[
  \phi (T) = h \pars[\Big]{
    \Gamma_\root, \sum_{i=1}^{\nu_t (\root)} \phi (T[i])
  }.
\]
In words, $\phi$ is an observation on the tree $T$ that can be recovered from
the mark of the root $\Gamma_\root$ and the sum of such observations on the
subtrees $T[1]$, \dots, $T[\nu_T (\root)]$.

We now make algebraic assumptions on the function $h$.
These assumptions, as well as the next theorem,
are inspired by the proofs of
\cite[Proposition~25]{Curien_LeGall_harmonic} and
\cite[Proposition~8]{Shen_harmonic_infinite_variance}.
\begin{description}
  \item[symmetry] $\forall u, v \in J, \ h(u,v) = h(v,u)$;
  \item[associativity] $\forall u,v,w \in J, \ h(h(u,v),w) = h(u, h(v,w))$;
  \item[position of summand]
    $
      \forall u,v \in J,\ \forall a > 0, \
      \displaystyle\frac{h(u+a, v)}{(u+a)v} = \frac{h(u,a + v)}{u(a+v)}.
    $
\end{description}
Here are examples of such functions.
\begin{enumerate}
  \item $J = \intoo{0}{\infty}$ and $h(u,v) = \alpha uv$,
    for some $\alpha > 0$ ;
  \item for $c > 0$, $J = \intoo{c}{\infty}$ and $h(u,v) = \frac{uv}{u+v-c}$ ;
  \item for $d \geq 0$, $J = \intoo{0}{\infty}$
    and $h(u,v) = \frac{uv}{u+v +d}$.
\end{enumerate}
We treat the second case. By writing
\[
  h(u,v) = \left(u^{-1} + v^{-1} -c u^{-1}v^{-1} \right)^{-1}
  = \left(u^{-1} \left(1 -cv^{-1}\right) + v^{-1}\right)^{-1},
\]
and noticing that $ (1 - cv^{-1}) > 0$, we see that
\[
  h(u,v) > \left( c^{-1} \left(1 - cv^{-1}\right) +
  v^{-1}\right)^{-1} > c.
\]
Symmetry is clear, so is the last property because $h(u,v) / (uv) $ only
depends on the sum of $u$ and $v$.
Associativity follows from the following identity :
\begin{align*}
  &h( h(u,v), w)
  =
  \left( \left(u^{-1} + v^{-1} - c u^{-1}v^{-1}\right)
    + w^{-1} -c \left(u^{-1} + v^{-1} - c u^{-1}v^{-1}\right) w^{-1}
  \right)^{-1} \\
  &=
  \left( u^{-1} + v^{-1} + w^{-1} -c \left(u^{-1}v^{-1}
  + u^{-1}w^{-1} + v^{-1}w^{-1}\right)
  + c^2 u^{-1}v^{-1}w^{-1}
  \right)^{-1}.
\end{align*}

We will use examples~2 and {3} in the next section, with deterministic marks
equal to $1$ and example~{2} in \Cref{sec:reclength} with random marks.
The first example is illustrated by $\unif$, when $\alpha = 1/m$ and the marks
are all equal to one.
Another example with random marks is given by the flow rule
$\unif^\Gamma$ in \Cref{prop:dimunifgamma}.

For $u$ in $J$, define
\[
  \kappa(u) =
  \E \bracks[\bigg]{
    h\pars[\Big]{
      u,
      \sum\nolimits_{i=1}^{\numch_{\tilde{T}}(\root)} \phi (\tilde{T}[i])
    }
  },
\]
where $\tilde{T}$ is a
$(\Gamma,\pp)$-Galton-Watson tree independent of $T$.

\begin{theorem}\label{thm:alg}
  Assume that
  $C \coloneqq \E [\kappa (\phi (T))] < \infty$.
  Then the probability measure $\mu$ with density
  $C^{-1} \kappa (\phi(T))$
  with respect to $\GW$ is $\Theta$-invariant.
\end{theorem}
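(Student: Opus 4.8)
The plan is to check the defining identity of $\Theta$-invariance against an arbitrary bounded measurable test function $F$ on $\markednoleaf$. Writing $\E$ for expectation under $\GW$ and $\Sigma \coloneqq \sum_{j=1}^{\nu_T(\root)}\phi(T[j])$, so that $\phi(T) = h(\Gamma_\root,\Sigma)$ and $\Theta_T(i) = \phi(T[i])/\Sigma$, invariance of $\mu$ amounts to
\[
  \E\bracks[\Big]{\kappa(\phi(T))\sum_{i=1}^{\nu_T(\root)}\frac{\phi(T[i])}{\Sigma}\,F(T[i])}
  = \E\bracks[\big]{\kappa(\phi(T))\,F(T)},
\]
the normalising constant $C$ cancelling. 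Every quantity in sight is positive, so all exchanges of sum, integral and expectation below are justified by Tonelli, and for bounded $F$ both sides are finite since $C<\infty$.

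First I would unfold $\kappa$ at the root. By definition $\kappa(\phi(T)) = \E[h(\phi(T),\Sigma')\mid T]$, where $\Sigma' \coloneqq \sum_i \phi(\tilde T[i])$ is built from an independent copy $\tilde T$ of the Galton--Watson tree. Because $h$ is symmetric and associative, $(J,h)$ is a commutative semigroup, so
\[
  \kappa(\phi(T)) = \E\bracks[\big]{h(h(\Gamma_\root,\Sigma),\Sigma')\mid T}
  = \E\bracks[\big]{h(\gamma',\Sigma)\mid T},\qquad \gamma'\coloneqq h(\Gamma_\root,\Sigma').
\]
The point of this regrouping is that $\gamma' = h(\Gamma_\root,\Sigma')$ has exactly the law of $\phi$ on a Galton--Watson tree (a fresh mark combined with a fresh offspring-sum), and it is independent of the subtrees $T[1],\dots,T[\nu_T(\root)]$.

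Next I would treat the left-hand side one child at a time. Fixing $i$ and splitting $\Sigma = \phi(T[i]) + \Sigma_{-i}$ with $\Sigma_{-i}\coloneqq\sum_{j\neq i}\phi(T[j])$, the position-of-summand axiom, applied with $u=\gamma'$, $v=\phi(T[i])$, $a=\Sigma_{-i}$, turns the $i$-th term into
\[
  h(\gamma',\Sigma)\,\frac{\phi(T[i])}{\Sigma}
  = \frac{\gamma'}{\gamma'+\Sigma_{-i}}\;h\pars[\big]{\gamma'+\Sigma_{-i},\,\phi(T[i])}.
\]
This is the crucial move: it trades the asymmetric flow weight $\phi(T[i])/\Sigma$ for the genuinely symmetric object $\gamma'+\Sigma_{-i}$, a sum of $\phi$-values. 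Using exchangeability of the i.i.d.\ subtrees to replace $\sum_i$ by $\nu_T(\root)$ times the $i=1$ term, and conditioning on $T[1]=:T'$ (a fresh Galton--Watson tree, whose $\phi$-value I call $s$), the left-hand side becomes $\E[F(T')\,\tilde G(\phi(T'))]$ with
\[
  \tilde G(s) = \sum_{k\ge 1} k\,p_k\,
  \E\bracks[\Big]{\frac{P_1}{P_1+\dots+P_k}\;h(P_1+\dots+P_k,\,s)},
\]
where $P_1=\gamma'$ and $P_2=\phi(T[2]),\dots,P_k=\phi(T[k])$ are, by the previous paragraph, $k$ independent copies of $\phi$ on a Galton--Watson tree.

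Finally I would remove the size-biasing. Since $P_1,\dots,P_k$ are exchangeable and $\sum_{i=1}^k P_i/(P_1+\dots+P_k)=1$, symmetry gives $\E[P_1/(P_1+\dots+P_k)\,h(P_1+\dots+P_k,s)] = \tfrac1k\,\E[h(P_1+\dots+P_k,s)]$, so the factor $k$ disappears and
\[
  \tilde G(s) = \sum_{k\ge1} p_k\,\E\bracks[\big]{h(P_1+\dots+P_k,\,s)} = \E[h(\Sigma',s)] = \kappa(s),
\]
using symmetry of $h$ and the fact that $\sum_{j=1}^{\nu'}\phi(\tilde T[j])$ is precisely $\Sigma'$. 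Hence the left-hand side equals $\E[\kappa(\phi(T'))F(T')]$, which is the right-hand side, and since $F$ was arbitrary the identity holds. I expect the main obstacle to be the third step: recognising that position-of-summand is exactly the device converting the flow weight into a symmetric sum, and then reading $\gamma'=h(\Gamma_\root,\Sigma')$ as one more independent $\phi$-value so that $\gamma'+\Sigma_{-1}$ becomes a size-biased offspring-sum. Once this is seen, the two exchangeability symmetrisations together with associativity and symmetry make everything collapse back onto the definition of $\kappa$.
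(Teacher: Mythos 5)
Your proof is correct and follows essentially the same route as the paper: unfolding $\kappa$ via an independent copy, using symmetry and associativity to form $\gamma' = h(\Gamma_\root,\Sigma')$ (which is exactly the paper's $\phi(\tilde{T})$ after re-declaring the root mark of $\tilde{T}$ to be $\Gamma_\root$), applying the position-of-summand axiom to trade the flow weight $\phi(T[i])/\Sigma$ for the symmetric sum $\gamma'+\Sigma_{-i}$, and then performing the two exchangeability symmetrisations before re-folding into $\kappa$. The only differences are cosmetic bookkeeping: you symmetrise over the children after the algebraic swap rather than before, and you phrase the final step through the tuple $(P_1,\dotsc,P_k)$ instead of the paper's spliced tree $\overline{T}$, but these are the same objects ($P_1 = \phi(\overline{T}[1])$, $P_j = \phi(\overline{T}[j])$) and the same identities.
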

\begin{proof}
  We use the same notations as in the previous discussion. Let $f$ be a
  non-negative measurable function on the set of marked trees
  and let $\Xi$ be a random ray on $T$ distributed according to
  $\Theta_T$. We need to show that
  \[
    \E \bracks*{ f\pars*{T[\Xi_1]} \kappa (\phi\left(T\right) )}
    =
    \E \bracks*{f (T) \kappa (\phi(T))}.
  \]
  We compute the left-hand side.
  By conditioning on the value of $\numch_T (\root)$, we get
  \[
    \E [ f (T [\Xi_1] ) \kappa (\phi \left(T\right))]
    =
    \sum_{k = 1}^{\infty} p_k
      \sum_{i = 1}^{k}
      \E \bracks*{f \left(T[i]\right) \indic{\Xi_1 = i}
        \kappa \left(\phi(T)\right)}.
  \]
  Since the other terms only depend on $T$, we can replace $\indic{\Xi_1 = i}$
  by its conditional expectation given $T$, which is
  \[
    \frac{ \phi (T [i] )}{ \sum_{j=1}^{k} \phi ( T [j] ) }.
  \]
  The function $\kappa (\phi (T) )$ being also symmetrical in $T[1]$,
  $T[2]$, \dots, $T[k]$, all the terms in the sum are equal:
  \[
    \E \bracks*{ f ( T [\Xi_1] ) \kappa (\phi (T) ) }
      = \sum_{k= 1}^{\infty} p_k k
      \E \bracks[\bigg]{
        f \left(T[1]\right)
        \frac{\phi(T[1])}{ \sum_{j=1}^{k} \phi (T [j] )}
        \kappa \pars[\Big]{
          h \pars[\big]{
            \Gamma_\root,
            \sum_{j=1}^{k} \phi (T [j] )
          }
        }
      }.
  \]
  The definition of $\kappa$ and Tonelli's theorem give
  \begin{align}\label{eq:hh}
    &\E [ f ( T [\Xi_1] ) \kappa (\phi \left(T\right)) ]
    \nonumber \\
    &= \sum_{k= 1}^{\infty} p_k k
    \E \bracks*{
      f (T[1])
        \frac{\phi(T[1])}{ \sum\nolimits_{j=1}^{k} \phi (T [j] )}
        h \pars*{
          h \pars*{
            \Gamma_\root,
            \sum\nolimits_{j=1}^{k} \phi (T [j] )
          },
          \sum\nolimits_{r=1}^{\nu_{\tilde{T}}(\root)} \phi(\tilde{T}[r])
        }
    }.
  \end{align}
  We now use the assumptions on $h$. We first use symmetry and
  associativity to obtain
  \begin{align*}
    &h \pars*{
      h \pars*{
        \Gamma_\root,
        \sum\nolimits_{j=1}^{k} \phi (T [j] )
      },
      \sum\nolimits_{r=1}^{\nu_{\tilde{T}} (\root)}
      \phi(\tilde{T}[r])
    }
    \\
    &=
    h \pars*{
      h \pars*{
        \Gamma_\root,
        \sum\nolimits_{r=1}^{\nu_{\tilde{T}} (\root)} \phi(\tilde{T}[r])
      },
      \phi (T[1]) +
      \sum\nolimits_{j=2}^{k} \phi (T [j] )
    }.
  \end{align*}
  Since we never used the value of the mark of the
  root of $\tilde{T}$, we might as well decide that it is $\Gamma_\root$, so that
  \[
    h \pars*{
      \Gamma_\root,
      \sum\nolimits_{r=1}^{\nu_{\tilde{T}}(\root)}
      \phi (\tilde{T}[r])
    }
    = \phi (\tilde{T}).
  \]
  Notice that $\tilde{T}$ has the same law as $T$ and
  is independent of $T[1]$, $T[2]$, \dots, $T[k]$.
  The first and third condition on the function $h$ now imply
  \begin{align*}
    &h \pars*{
      \phi (\tilde{T}),
      \phi (T[1]) + \sum\nolimits_{j=2}^{k} \phi (T[j])
    }
    \\
    &= h \pars*{
      \phi (T [1] ),
      \phi (\tilde{T}) +
      \sum\nolimits_{j=2}^{k} \phi (T [j] )
    }
    \frac{
      \phi (\tilde{T}) \left( \sum_{j=1}^{k} \phi (T [j] ) \right)}{
      \phi (T[1]) \left(\phi (\tilde{T}) + \sum_{j=2}^{k} \phi (T [j] )\right)
    }.
  \end{align*}
  Write $\overline{T}$ for the tree obtained by replacing the subtree $T[1]$ by
  $\tilde{T}$ in $T$.
  The random tree $\overline{T}$ has the same law as $T$ and
  is independent of $T[1]$.
  Furthermore,
  \[
    \frac{
      \phi (\tilde{T})}{
      \phi (\tilde{T}) + \sum_{j=2}^{k} \phi (T[j])
    }
    =
    \frac{
      \phi (\overline{T}[1] )}{
      \sum_{j=1}^{k} \phi ( \overline{T} [j] )
    }.
  \]
  Plugging the last two equalities in equation~\eqref{eq:hh}, we obtain
  \[
    \E [ f (T [\Xi_1] ) \kappa ( \phi(T) ) ]
    =
    \sum_{k=1}^{\infty} kp_k
    \E \bracks*{
      f (T[1])
      \frac{
        \phi (\overline{T}[1])}{
        \sum_{j=1}^{k}\phi (\overline{T}[j])
      }
      h \pars*{
        \phi (T [1]),
        \sum\nolimits_{j=1}^{k} \phi (\overline{T}[j])
      }
    }.
  \]
  By symmetry, for all $i \leq k$, we have
  \begin{align*}
    &\E \bracks*{
      f (T[1])
      \frac{
        \phi (\overline{T}[1])}{
        \sum_{j=1}^{k}\phi (\overline{T}[j])
      }
      h \pars*{
        \phi (T[1]),
        \sum\nolimits_{j=1}^{k} \phi (\overline{T}[j])
      }
    }
    \\
    &=
    \E \bracks*{
      f (T[1])
      \frac{
        \phi (\overline{T}[i])}{
        \sum_{j=1}^{k}\phi (\overline{T}[j])
      }
      h \pars*{
        \phi (T[1]),
        \sum\nolimits_{j=1}^{k} \phi (\overline{T}[j])
      }
    },
  \end{align*}
  so that
  \begin{align*}
    &k\E \bracks*{
      f (T[1])
      \frac{\phi (\overline{T}[1])}{
        \sum_{j=1}^{k}\phi (\overline{T}[j])
      }
      h \pars*{
        \phi (T[1]),
        \sum\nolimits_{j=1}^{k} \phi (\overline{T}[j])
      }
    }
    \\
    &=
    \E \bracks*{
      f (T[1])
      h \pars*{
        \phi (T[1]),
        \sum\nolimits_{j=1}^{k} \phi (\overline{T}[j])
      }
    }.
  \end{align*}
  To conclude the proof, we remove the conditioning on the value of
  $\numch_T (\root) = \numch_{\overline{T}} (\root)$
  and replace
  $
    h (
      \phi (T [1]),
      \sum_{j=1}^{k} \phi (\overline{T} [j])
    )
  $
  by its conditional expectation given $T[1]$, which equals
  $\kappa (\phi (T[1]))$.
\end{proof}

\section{Dimension of the harmonic measure
  of the \texorpdfstring{$\lambda$}{lambda}-biased random walk}
\label{sec:lambda}
\subsection{The dimension as a function of the law of the conductance}
Let $\prt{t} \in \prt\noleaf$ and $\lambda > 0$.
The $\lambda$-biased random
walk on $t$ is the Markov chain whose transition probabilities are
the following :
\[
  P^t ( x , y ) =
  \begin{dcases}
    1 &  \text{if } x = \prt\root \text{ and } y = \root ;
    \\
    \frac{\lambda}{\lambda + \nu_t (x)} & \text{if } y = \prt x ;
    \\
    \frac{1}{\lambda + \nu_t (x)} & \text{if } y \text{ is a child of }x ;
    \\
    0 & \text{otherwise.}
  \end{dcases}
\]
For $x$ in $t$, we write $P^t_x$ for a probability measure under which the
process $ (X_n)_{n \geq 0} $ is the Markov chain on $t$, starting
from $x$, with transition kernel $P^t$.

Let $T$ be a $(\Gamma, \pp)$-Galton-Watson tree, where $\Gamma$ is
deterministic and will be fixed later on.
Recall that $m$ denotes the expectation of our reproduction law and that we
assume it to be finite.
From \cite{Lyons_rwperco}, we know that the $\lambda$-biased random walk on $T$
is almost
surely transient if and only if $\lambda < m$, which we assume from now on.
Since the walk is transient the random exit times defined by
\[
  \et_n
  \coloneqq
  \inf \setof{s \geq 0 }{\forall k \geq s, \, \abs{X_k} \geq n}
\]
are $P^T_\root$-almost surely finite.
We call
$ \Xi \coloneqq (X_{\et_n})_{n \geq 0}$
the harmonic ray et denote by $\harm_T$ its distribution.

We need to introduce the conductance to
state our result. For any tree $t$ in $\noleaf$,
\[
  \beta (t)
  \coloneqq
  P_\root^t (\forall k \geq 0, \, X_k \neq \prt{\root} ).
\]
This is the same as the effective conductance between $\prt\root$ and infinity in
the tree $t$ when we put on each edge $(\prt{x},x)$ the conductance
$\lambda^{-\abs{x}}$.
Notice that for any $x \in t$, we also have
\[
  \beta ( t[x] )
  = P_x^t (\forall k \geq 0, \, X_k \neq \prt{x} ).
\]
We denote, for $x \in \prt{t}$,
\[
  \tau_x \coloneqq \inf \setof{ k \geq 0 }{ X_k = x },
\]
with $\inf \emptyset = \infty$.
Then, applying successively the Markov property
at times $1$ and $\tau_\root$, we get
\begin{align*}
  \beta(t) & =P^t_\root (\tau_{\prt\root} = \infty)
  =
  \sum_{i=1}^{\numch_t(\root)}P^t(\root,i)
    P^t_i ( \tau_{\prt\root} = \infty)
  \\
  &=
  \sum_{i=1}^{\numch_t(\root)}
  P^t(\root,i)
  \left(
    P^t_i ( \tau_{\root} = \infty)
    + P^t_i ( \tau_\root < \infty)
    P^t_\root(\tau_{\prt\root} = \infty )
  \right)
  \\
  &=
  \sum_{i=1}^{\numch_t(\root)} P^t(\root,i)
  \left( \beta(t[i]) + (1-\beta(t[i])) \beta(t) \right).
\end{align*}
Some elementary algebra, together with the fact that
\[
  1 - \sum_{i=1}^{\nu_t(\root)}P^t(\root,i) = P^t(\root,\prt\root),
\]
lead to the recursive equation:
\begin{equation}\label{eq:recbetageneral}
  \beta (t) =
  \frac{
    \sum_{i = 1}^{\numch_t (\root)} P^t(\root, i) \beta (t[i]) }{
    P^t(\root,\prt\root) +
    \sum_{ i = 1 }^{\numch_t (\root) } P^t(\root, i) \beta (t[i])
  }.
\end{equation}
In our special case of the $\lambda$-biased random walk, this equation becomes:
\begin{equation}\label{eq:recbeta}
  \beta (t) =
  \frac{
    \sum_{i = 1}^{\nu_t (\root) } \beta (t[i])}{
    \lambda + \sum_{ i = 1 }^{\nu_t (\root) } \beta(t[i])
  }.
\end{equation}
For all $1 \leq i \leq \nu_t(\root)$, the harmonic measure of $i$ is:
\begin{align*}
  &P_\root (\Xi_1 = i)
  = P^t  (\root, \prt\root) P^t_{\prt\root}(\Xi_1 =1)
  + P^t (\root,i) P_i^t (\Xi_1 = i)
  + \sum_{\substack{j=1 \\ j\neq i}}^{\numch_t(\root)}
  P^t (\root, j) P_j^t (\Xi_1 = i) \\
  &=
  P_\root (\Xi_1 = i)
  \Big(
    P^t(\root, \prt\root) +
    \sum_{j=1}^{\nu_t(\root)} (1 - \beta(t[j])) P^t(\root, j)
  \Big)
    + P^t(\root,i)\beta(t[i]).
\end{align*}
As a consequence, for all $1 \leq i \leq \nu_t (\root) $,
\begin{equation}\label{eq:harmgeneral}
  \harm_t (i)
  = P_\root (\Xi_1 = i )
  = \frac{
    P^t(\root,i)\beta (t[i])}{
    \sum_{j = 1}^{\nu_t (\root) } P^t(\root,j)\beta (t[j])
  }.
\end{equation}
For the $\lambda$-biased random walk, this becomes,
\begin{equation}\label{eq:harmlambda}
  \harm_t (i)
  = \frac{
      \beta (t[i])}{
      \sum_{j = 1}^{\nu_t (\root) } \beta (t[j])
    }.
\end{equation}
This equation describes $\harm$ as a $\GW$-flow rule.
We are now ready to use the machinery described in \Cref{sec:invariant}.
Set $J = \intoo{0}{\infty}$ if $\lambda \geq 1$ and
$J = \intoo{1-\lambda}{\infty}$ if $\lambda < 1$.
In the latter case, the fact that almost surely
$\beta (T) > 1 - \lambda$
comes from the Rayleigh principle and the fact that $p_1 < 1$,
comparing the conductance of the whole tree to the one of a tree with a unique ray.
Set, for $u$ and $v$ in $J$,
\[
  h(u,v) = \frac{uv}{u+v+\lambda - 1}.
\]
Notice that we are in the setting of the second example of \Cref{sec:invariant}
if
$\lambda < 1$ and of the third if $\lambda \geq 1$, so that $h$ fulfills the
algebraic assumptions stated in Section~3.
By equation~\eqref{eq:recbeta},
\[
  \beta (t) =
  h \Bigl( 1, \sum\nolimits_{i = 1}^{\numch_t (\root) } \beta (t[i]) \Bigr).
\]
So we set $\Gamma_x \coloneqq 1$ for all $x \in T$.
For $u \in J$, let
\[
  \kappa (u)
  \coloneqq
  \E \bracks*{
    h \pars*{
      u,
      \sum\nolimits_{i = 1}^{\nu_{\tilde{T}}(\root)}
      \beta (\tilde{T}[i])
    }
  }
  =
  \E \bracks*{
    \frac{
      u \sum_{i = 1}^{\nu_{\tilde{T}}(\root)} \beta (\tilde{T}[i])}{
      \lambda - 1 + u +
      \sum_{i = 1}^{\nu_{\tilde{T}}(\root)} \beta (\tilde{T}[i])
    }
  },
\]
where $\tilde{T}$ is an independent copy of $T$.

We need to prove that
$ \E [ \kappa (\phi (T)) ] < \infty $.
By \cite[Lemma~{4.2}]{Elie_speed}:
\[
  \E \bracks*{\frac{1}{\lambda - 1 + \beta (T)}}< \infty.
\]
This implies, using the independence of $T$ and $\tilde{T}$, that
\begin{align*}
  \E \bracks*{
    \frac{
      \beta (T)
      \sum_{i = 1}^{\numch_{\tilde{T}}(\root)} \beta (\tilde{T}[i])}{
      \lambda - 1 + \beta (T) +
      \sum_{i = 1}^{\numch_{\tilde{T}}(\root)} \beta (\tilde{T}[i])
    }
  }
  &\leq
  \E \bracks*{
    \frac{
      \beta (T) \sum_{i = 1}^{\numch_{\tilde{T}}(\root)} \beta (\tilde{T}[i])}{
      \lambda - 1 + \beta (T)
    }
  }
  \\
  &= \E \bracks*{
    \sum\nolimits_{i = 1}^{\numch_{\tilde{T}}(\root)} \beta (\tilde{T}[i])
  }
  \E \bracks*{
    \frac{\beta (T)}{ \lambda - 1 + \beta (T)}
  }
  \\
  &\leq m \E \bracks*{ \frac{1}{\lambda - 1 + \beta (T)}} < \infty.
\end{align*}
We may now use \Cref{thm:alg}.
\begin{theorem}\label{thm:biased}
  The probability measure $\mu_\harm$ of density
  $C^{-1}\kappa (\beta (T))$,
  with respect to $\GW$,
  where $\kappa$ is defined by
  \[
    \kappa (u) =
    \E \bracks*{
      \frac{
        u \sum_{j=1}^{\numch_{\tilde{T}}(\root)} \beta(\tilde{T}[i])
      }{
        \lambda - 1 + u + \sum_{j=1}^{\numch_{\tilde{T}}(\root) }
        \beta(\tilde{T}[i])
      }
    },
  \]
  and $C$ is the renormalizing constant, is $\harm$-invariant.
The dimension of $\harm_T$ equals almost surely
\begin{equation}\label{eq:dlambda}
  d_\lambda = \log (\lambda)
    - C^{-1}
    \E \bracks*{
      \log (1 - \beta (T) )
      \frac{
        \beta(T)\sum_{i = 1}^{\numch_{\tilde{T}}(\root)} \beta (\tilde{T}[i])
      }{
        \lambda - 1 + \beta(T) +
        \sum_{i = 1}^{\numch_{\tilde{T}}(\root)} \beta (\tilde{T}[i])
      }
    }.
\end{equation}
\end{theorem}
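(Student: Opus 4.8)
The plan is to prove the two assertions in turn. For the \emph{invariance} of $\mu_\harm$ there is almost nothing left to do: the discussion preceding the statement already records that $h(u,v) = uv/(u+v+\lambda-1)$ meets the symmetry, associativity and position-of-summand requirements on $J$, that with the constant marks $\Gamma_x = 1$ the recursion \eqref{eq:recbeta} is precisely $\beta(t) = h(1, \sum_{i=1}^{\nu_t(\root)} \beta(t[i]))$, and that $C = \E[\kappa(\beta(T))] < \infty$. I would therefore apply \Cref{thm:alg} with $\phi = \beta$ directly, which gives that the probability measure of density $C^{-1}\kappa(\beta(T))$ with respect to $\GW$ is $\harm$-invariant.

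For the \emph{dimension}, I would start from \Cref{prop:flow_fondamental}: as $\mu_\harm$ is $\harm$-invariant and absolutely continuous with respect to $\GW$, the flow $\harm_T$ is almost surely exact-dimensional and, by \eqref{cor:hausdflow},
\[
  d_\lambda = \dim^{\distrays}\harm_T = \E_{\mu_\harm}\bracks*{-\log\harm_T(\Xi_1)}.
\]
The next step is to rewrite the integrand so that a telescoping term appears. Writing $\Sigma \coloneqq \sum_{j=1}^{\nu_T(\root)}\beta(T[j])$, equation \eqref{eq:harmlambda} gives $\harm_T(\Xi_1) = \beta(T[\Xi_1])/\Sigma$, while \eqref{eq:recbeta} yields $\Sigma = \lambda\beta(T)/(1-\beta(T))$. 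Hence, with $g(t,\xi) \coloneqq \log\beta(t)$ on $\markednoleafwithrays$ and $S$ the shift,
\[
  -\log\harm_T(\Xi_1) = \log\lambda - \log(1-\beta(T)) + \bigl(g - Sg\bigr).
\]

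The hard part will be showing $\E_{\mu_\harm}[g - Sg] = 0$, because $\log\beta(T)$ need not be $\mu_\harm$-integrable, so one cannot simply invoke stationarity and cancel $\E_{\mu_\harm}[g]$ against $\E_{\mu_\harm}[Sg]$. Here I would use \Cref{lem:classic}: the function $g$ is finite (since $0 < \beta < 1$) and measurable, so it is enough to bound $g - Sg$ below by an integrable function. Since $\beta(T[\Xi_1]) \leq \Sigma$, one gets $g - Sg \geq \log(1-\beta(T)) - \log\lambda$, and $\log(1-\beta(T))$ is $\mu_\harm$-integrable: the bound $h(u,v) \leq u$ valid on $J$ gives $\kappa(\beta(T)) \leq \beta(T) \leq 1$, so $\E_{\mu_\harm}[-\log(1-\beta(T))] \leq C^{-1}\E_{\GW}[-\log(1-\beta(T))]$, and from \eqref{eq:recbeta} one has $-\log(1-\beta(T)) = \log(1 + \Sigma/\lambda) \leq \nu_T(\root)/\lambda$, whose $\GW$-expectation is $m/\lambda < \infty$. \Cref{lem:classic} then gives the vanishing of the telescoping term, so that
\[
  d_\lambda = \log\lambda - \E_{\mu_\harm}\bracks*{\log(1-\beta(T))}.
\]

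To finish, I would unfold the density and the definition of $\kappa$: writing $\E_{\mu_\harm}[\log(1-\beta(T))] = C^{-1}\E_{\GW}[\log(1-\beta(T))\,\kappa(\beta(T))]$ and replacing $\kappa(\beta(T))$ by $\E[h(\beta(T), \sum_{i=1}^{\nu_{\tilde T}(\root)}\beta(\tilde T[i]))]$ over an independent copy $\tilde T$ reproduces exactly the expectation appearing in \eqref{eq:dlambda}, which completes the proof.
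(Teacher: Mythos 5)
Your proposal is correct and follows essentially the same route as the paper's proof: invariance is obtained by invoking \Cref{thm:alg} with $\phi = \beta$ after the preceding discussion, and the dimension by combining formula~\eqref{cor:hausdflow} with the recursion~\eqref{eq:recbeta} and then killing the telescoping term $g - Sg$, $g = \log \beta$, via \Cref{lem:classic}. The only (valid) deviation is the integrable lower bound required by \Cref{lem:classic}: the paper derives the constant bound $\beta(T)/\beta(T[\Xi_1]) \geq 1/(1+\lambda)$ by a monotonicity argument, whereas you use $\beta(T[\Xi_1]) \leq \sum_{j}\beta(T[j])$ to get $g - Sg \geq \log(1-\beta(T)) - \log\lambda$ and then verify the $\mu_\harm$-integrability of $\log(1-\beta(T))$ through $\kappa(u) \leq u \leq 1$ and $-\log(1-\beta(T)) = \log(1 + \Sigma/\lambda) \leq \nu_T(\root)/\lambda$, a slightly longer but self-contained and correct alternative.
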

\begin{proof}
  The only statement we still need to prove is the formula for the dimension.
  We write $\mu \coloneqq \mu_\harm$ for short and
  $
    \E_\mu [ \, \cdot \, ]
    \coloneqq
    \E \bracks{
      \, \cdot \, C^{-1}\kappa (\beta(T) )
    }
  $.
  By formula~\eqref{cor:hausdflow} and equation~\eqref{eq:harmlambda},
  \[
    d_\lambda =
    \E_\mu \bracks*{ \log \frac{1}{\harm_T (\Xi_1)} }
    = \E_\mu \bracks*{
      \log
      \frac{
        \sum_{i=1}^{\nu_T (\root) } \beta (T[i])}{
        \beta (T [\Xi_1] )
      }
    }.
  \]
  Using equation~\eqref{eq:recbeta}, we see that
  \[
    \sum\nolimits_{i=1}^{\nu_T (\root) } \beta (T[i])
    =
    \frac{\lambda \beta (T)}{1 - \beta (T)}.
  \]
  Therefore,
  \[
    d_\lambda =
    \log \lambda +
    \E_\mu \bracks*{
      - \log (1 - \beta (T)) + \log ( \beta (T) ) - \log ( \beta (T[\Xi_1]) )
    }.
  \]
  We are done if we can prove that
  $ \log(\beta (T)) - \log(\beta (T [\Xi_1])) $
  is integrable with integral $0$ with respect to $\E_\mu$.
  By invariance and \Cref{lem:classic},
  it is enough to show that it is bounded from below by an
  integrable function.
  We compute, using again formula~\eqref{eq:recbeta} :
  \[
    \frac{\beta (T)}{\beta ( T [\Xi_1] )}
    =
    \frac{
      1 + \beta ( T [\Xi_1] )^{-1}
      \sum_{i = 1, \, i\neq \Xi_1}^{\numch_T (\root)} \beta (T [i] )
      }{
      \lambda + \beta ( T [\Xi_1]  )
      + \sum_{i = 1, \, i\neq \Xi_1}^{\numch_T (\root)} \beta (T [i])
    }
    \geq \frac{1}{\lambda + \beta (T [\Xi_1])}
    \geq \frac{1}{\lambda + 1},
  \]
  where, for the first inequality, we used the fact that the function
  \[
    x \longmapsto
    \frac{
      \beta (T [\Xi_1] )^{-1} x + 1 }{
      x + \lambda + \beta (T [\Xi_1] )
    }
  \]
  is increasing on $\intfo{0}{\infty}$.
\end{proof}
To conclude this section, we recall that a very similar formula
was independently discovered by Lin
in~\cite{shen_lin_harmonic_biased},
using the same invariant probability measure.
Its work shows that such a formula, though it might look complicated, does
indeed yield very interesting results.

\subsection{Numerical results}
We will now use our formula to conduct numerical experiments about the dimension
$d_\lambda$ as a function of $\lambda$.

It was asked in \cite{LPP97} whether $d_\lambda$ is a monotonic function of
$\lambda$, for $\lambda$ in $\intoo{0}{m}$. To the best of our knowledge, this
question is still open. We were not able to find a theoretical answer.
However, using formula~\eqref{eq:dlambda} together with the recursive
equation~\eqref{eq:recbeta}, we are able to draw a credible enough graph
of $d_\lambda$ versus $\lambda$,
for a given (computationally reasonable) reproduction law.

The idea is the following.
Fix a reproduction law $\pp$
(such that $p_0=0$, $p_1 <1 $ and of finite mean $m$) and a bias $\lambda$ in
$\intoo{0}{m}$.
For any non-negative integer $n$, and a Galton-Watson tree $T$, let
$\beta_n(T) = P^T_\root \left(\tau^{(n)} < \tau_{\prt\root}\right)$, where
$\tau^{(n)}$ is the first hitting time of level $n$ by the $\lambda$-biased
random walk $ \left(X_n\right)_{n\geq 0}$.
Since the family of events $\{\tau^{(n)} < \tau_{\prt\root}\}$ is
decreasing, we have
\[
  \beta(T) = P_\root^T
  \Bigl(
    \bigcap_{n \geq 1} \left\{\tau^{(n)} < \tau_{\prt\root} \right\}
  \Bigr)
  = \lim_{n \to \infty} \beta_n(T).
\]
Using the Markov property as in~\eqref{eq:recbeta} yields the
recursive equation
\[
  \beta_{n+1} (T)
  = \frac{
      \sum_{i=1}^{\numch_T(\root)}\beta_n(T[i])}{
      \lambda + \sum_{i=1}^{\numch_T(\root)}\beta_n(T[i])
    }.
\]
By definition, $\beta_0(T)$ is equal to one. Hence, we may use the following
algorithm to compute the law of $\beta_n \coloneqq \beta_n(T)$:
\begin{itemize}
  \item the law of $\beta_0$ is the Dirac measure $\delta_1$;
  \item for any $n \geq 0$, assuming we know the law of $\beta_n$, the law of
    $\beta_{n+1}$ is the law of the random variable
    \[
      \frac{
        \sum_{i=1}^{\nu}\beta_n^{(i)}}{
        \lambda + \sum_{i=1}^{\nu}\beta_n^{(i)}
      },
    \]
    where $\nu$, $\beta_n^{(1)}$, $\beta_n^{(2)}$, ... are independent, $\nu$
    has the law $\pp$ and each $\beta_n^{(i)}$ has the law $\beta_n$.
\end{itemize}

Using the preceding algorithm, after $n$ iteration, we obtain the law of
$\beta_n(T)$. Since $\beta_n(T) \to \beta(T)$, almost surely,
we also have convergence in law.
\begin{remark}
  The preceding discussion shows that the law of $\beta$ is the greatest (for
  the stochastic partial order) solution of the recursive
  equation~\eqref{eq:recbeta}.
  In \cite[Theorem~4.1]{LPP97}, for $\lambda = 1$,
  the authors show that the only solutions to this recursive equation are the
  Dirac measure $\delta_0$ and the law of $\beta$.
  However, their proof cannot
  be adapted to the more general case $\lambda \in \intoo{0}{m}$.
  That is why, here, we had to choose for our initial measure, the Dirac measure
  $\delta_1$.
\end{remark}
For the numerical computations, we discretize the interval $\intff{0}{1}$ and
apply the preceding algorithm with some fixed final value of $n$.
See Figure~\ref{fig:beta123} for an example of what one can obtain with
$100$ iterations and a discretization step equal to $1 / 20000$.
\begin{figure}
  \centering
  \begin{tikzpicture}[scale=0.57]
    \begin{groupplot}[group style = {group size = 3 by 1}]
      \nextgroupplot[title = {$\lambda = 0.7$}, xmin=0.3, xmax=0.8]
        \addplot[mark=none, very thin, color = blue] table[x index=0, y index=1]
          {beta123lambda0d7.txt};
      \nextgroupplot[xmax=.7, title = {$\lambda = 1$}]
        \addplot[mark=none, very thin, color = blue] table[x index=0, y index=1]
          {beta123lambda1.txt};
      \nextgroupplot[xmax=.6, title = {$\lambda = 1.2$} ]
        \addplot[mark=none, very thin, color = blue] table[x index=0, y index=1]
          {beta123lambda1d2.txt};
    \end{groupplot}
  \end{tikzpicture}
  \caption{
    The apparent density of the conductance $\beta$, for
    $p_1 = p_2 = p_3 = \frac13$ and $\lambda$ in $\{0.7, 1, 1.2\}$.
  }
  \label{fig:beta123}
\end{figure}
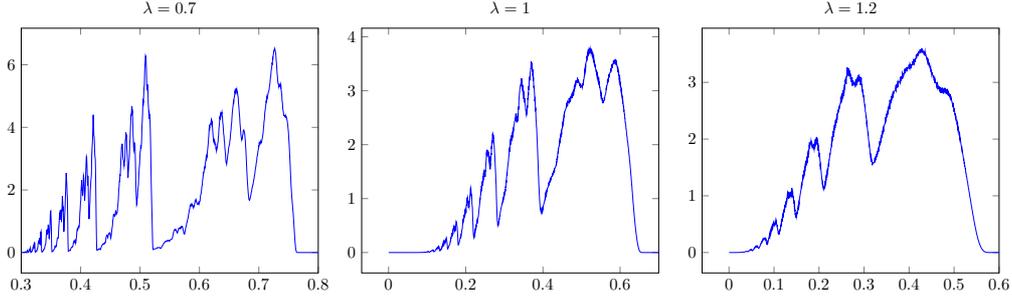

Before we compute the dimension, we simplify a little bit the
formula~\eqref{eq:dlambda}.
First, notice that we may write
\begin{equation}\label{eq:sumbetatoone}
  \sum_{j=1}^{\nu} \beta (\tilde{T}[j])
  = \frac{\lambda\tilde{\beta}}{1-\tilde{\beta}},
\end{equation}
where $\tilde{\beta}$ is an independent copy of $\beta = \beta(T)$.
Recalling that the constant $C$ in \eqref{eq:dlambda} is the expectation of
$\kappa(\beta)$, we obtain
\begin{equation}\label{eq:dlambda2}
  d_\lambda
  =
  \log(\lambda) -
  \left.
    \E \bracks*{
      \frac{
        \log (1 - \beta) \beta \tilde{\beta}}{
        \lambda - 1 + \beta + \tilde{\beta} - \beta\tilde{\beta}
      }
    }
  \middle/
    \E \bracks*{
        \frac{\beta \tilde{\beta}}{
        \lambda - 1 + \beta + \tilde{\beta} - \beta\tilde{\beta}
        }
      }
  \right..
\end{equation}
From there,
computing $d_\lambda$ from a dicretized approximation of the law of $\beta$ is
straightforward.

From \cite{shen_lin_harmonic_biased}, we know that $d_\lambda$ goes to
$\E\left[\log(\nu)\right]$
(the almost sure dimension of the visibility measure,
see \cite[Section~4]{LPP95}) as $\lambda$ goes to $0$,
and to $\log m$ as $\lambda$ goes to $m$.

We also compute numerically the speed. Recall from~\cite{Elie_speed}, that the
speed of the $\lambda$-biased transient random walk is given by
\[
  \ell_\lambda
  =
  \left.
    \E \bracks*{
      \frac{
        (\nu - \lambda)\beta_0}{
        \lambda - 1 + \beta_0 + \sum_{j=1}^{\nu} \beta_j
      }
    }
  \middle/
    \E \bracks*{
      \frac{
        (\nu + \lambda)\beta_0}{
        \lambda - 1 + \beta_0 + \sum_{j=1}^{\nu} \beta_j
      }
    }
  \right.,
\]
where $\nu$, $\beta_0$, $\beta_1$, \ldots are independent and $\nu$ has law
$\pp$, while for each $i$, $\beta_i$ has law $\beta(T)$.
Using first symmetry and then~\eqref{eq:sumbetatoone}, one obtains
\[
  \E \bracks*{
    \frac{
      (\nu \pm \lambda)\beta_0}{
      \lambda - 1 + \beta_0 + \sum_{j=1}^{\nu}\beta_i
    }
  }
  =
  \E \bracks*{
    \frac{
      \left(\sum_{j= 1}^{\nu}\beta_j\right) \pm \lambda\beta_0}{
      \lambda - 1 + \beta_0 + \sum_{j=1}^{\nu}\beta_j
    }
  }
  =
  \E \bracks*{
    \frac{
      \lambda \left(\tilde{\beta} \pm \beta \mp \beta\tilde{\beta}\right)}{
      \lambda -1 + \beta + \tilde{\beta} - \tilde{\beta}\beta
    }
  },
\]
where we have denoted $\beta = \beta_0$ and
$\sum_{j=1}^\nu \beta_i = \lambda\tilde{\beta}/(1 - \tilde{\beta})$.
Finally, we may express the speed as
\begin{equation}\label{eq:speed}
  \ell_\lambda
  =
  \left.
    \E \bracks*{
      \frac{
        \beta\tilde{\beta}}{
        \lambda - 1 + \beta + \tilde{\beta}
        - \beta\tilde{\beta}
      }
    }
  \middle/
  \E \bracks*{
      \frac{
        \beta + \tilde{\beta} - \beta\tilde{\beta}}{
        \lambda - 1 + \beta + \tilde{\beta} - \beta\tilde{\beta}
      }
    }
  \right..
\end{equation}
We also recall that, in the case $\lambda = 1$, it was shown in \cite{LPP95}
that the speed of the random walk
equals
\[
  \ell_1 = \E \bracks*{\frac{\nu -1}{\nu+1}}.
\]

We have made the numerical computations in two cases,
the first one is when the reproduction law is given by $p_1 = p_2 = 1/2$,
see \cref{fig:dimspeed12} and
the second one is for $\pp$ given by $p_1 = p_2=p_3 = 1/3$, see
\cref{fig:dimspeed123}.
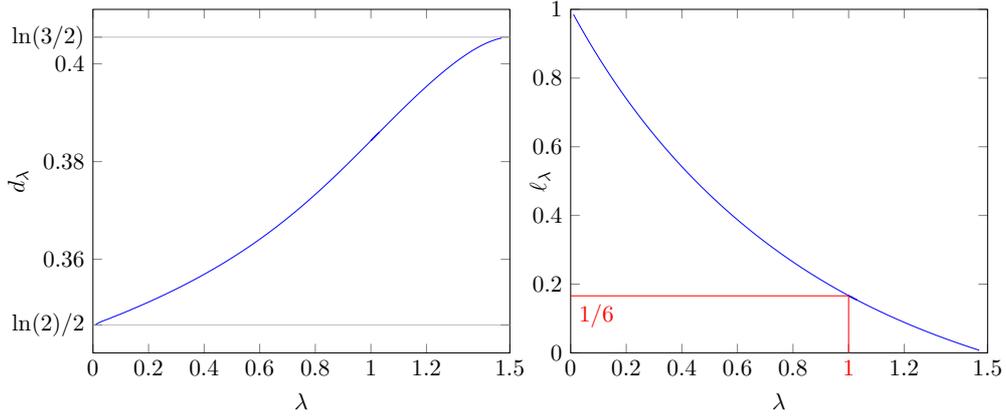
\begin{figure}
  \centering
  \begin{tikzpicture}[scale=0.8]
    \begin{groupplot}[group style = {group size = 2 by 1}]
    \nextgroupplot[
      extra y ticks={0.34657, 0.40547},
      extra y tick labels={$\ln(2)/2$, $\ln(3/2)$},
      extra y tick style={grid=major},
      xlabel=$\lambda$,
      ylabel=$d_\lambda$,
      xtick = {0, .2, .4, .6, .8, 1., 1.2, 1.5},
      xmin = 0,
      xmax = 1.5
      ]
      \addplot[mark=none, very thin, color = blue] table[x=lambda, y=dim]
        {sd12.txt};
    \nextgroupplot[
      xmin = 0,
      xmax = 1.5,
      ymin = 0,
      ymax = 1.,
      xlabel=$\lambda$,
      ylabel=$\ell_\lambda$,
      ytick = {0, 0.2, 0.4, 0.6, 0.8, 1.},
      y label style={at={(axis description cs:0.15,.5)},anchor=south},
      xtick = {0, .2, .4, .6, .8, 1.2, 1.5},
      extra x ticks = {1},
      extra x tick style = {tick label style = red},
      enlarge x limits = false,
      enlarge y limits = false,
      ]
      \addplot[mark=none, very thin, color = blue] table[x=lambda, y=speed]
        {sd12.txt};
      \draw[help lines, color = red] (axis cs:0,0.166) -| (axis cs:1,0);
      \node[below right, color = red] at (axis cs:0,0.167) {$1/6$};
    \end{groupplot}
  \end{tikzpicture}
  \caption{
    The dimension and the speed of the $\lambda$-biased random walk on a Galton-Watson
    tree as functions of $\lambda$, for $p_1 = p_2 = 1/2$.
  }
  \label{fig:dimspeed12}
\end{figure}

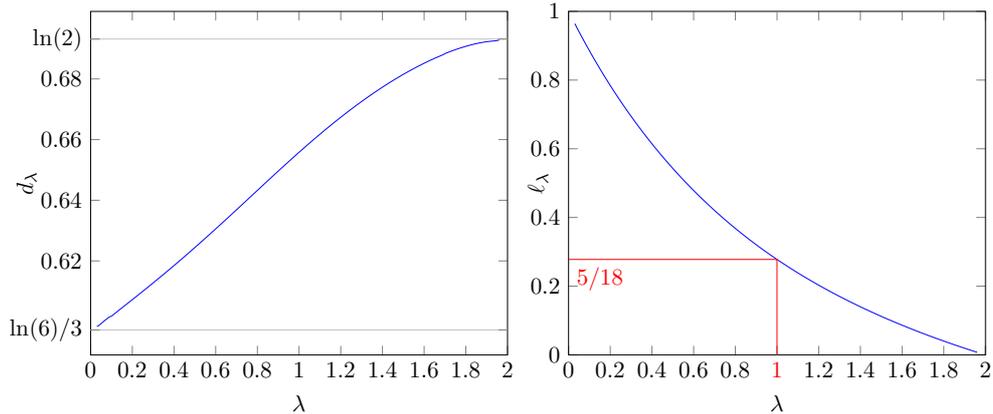
\begin{figure}
  \centering
  \begin{tikzpicture}[scale=0.8]
  \begin{groupplot}[group style = {group size = 2 by 1}]
    \nextgroupplot[
      extra y ticks={0.59725, 0.69315},
      extra y tick labels={$\ln(6)/3$, $\ln(2)$},
      extra y tick style={grid=major},
      xlabel=$\lambda$,
      ylabel=$d_\lambda$,
      y label style={at={(axis description cs:0.07,.5)},anchor=south},
      ytick = {0.62, 0.64, 0.66, 0.68},
      xtick = {0, .2, .4, .6, .8, 1., 1.2, 1.4, 1.6, 1.8, 2},
      xmin = 0,
      xmax = 2.0
      ]
      \addplot[mark=none, very thin, color = blue] table[x=lambda, y=dim]
        {sd123.txt};
    \nextgroupplot[
      xmin = 0,
      xmax = 2,
      ymin = 0,
      ymax = 1,
      xlabel=$\lambda$,
      ylabel=$\ell_\lambda$,
      y label style={at={(axis description cs:0.15,.5)},anchor=south},
      xtick = {0, .2, .4, .6, .8, 1.2, 1.4, 1.6, 1.8, 2},
      extra x ticks = {1},
      extra x tick style = {tick label style = red},
      ytick = {0, 0.2, 0.4, 0.6, 0.8, 1.},
      enlarge x limits = false,
      enlarge y limits = false,
      ]
      \addplot[mark=none, very thin, color = blue] table[x=lambda, y=speed]
        {sd123.txt};
      \draw[help lines, color = red] (axis cs:0,0.2778) -| (axis cs:1,0);
      \node[below right, color = red] at (axis cs:0,0.2778) {$5/18$};
  \end{groupplot}
  \end{tikzpicture}
  \caption{
    The dimension and the speed of the $\lambda$-biased random walk on a
    Galton-Watson tree as functions of $\lambda$, for $p_1 = p_2 = p_3 = 1/3$.
  }
  \label{fig:dimspeed123}
\end{figure}
These figures suggest that the
speed and the dimension are indeed monotonic with respect to $\lambda$.
Furthermore, the speed looks convex, while the dimension seems to be neither
convex nor concave.

\section{Galton-Watson trees with recursive lengths}
\label{sec:reclength}
\subsection{Description of the model}
We generalize a model of trees with random lengths
(or resistances) that can be found in
\cite[Section~2]{Curien_LeGall_harmonic} and
\cite[Section~2]{Shen_harmonic_infinite_variance}.
It appeared as the scaling limit of the sequence
$( T_n / n)_{n\geq 1}$,
where $T_n$ is a reduced critical Galton-Watson
tree conditioned to survive at the $n$\textsuperscript{th} generation.

In both \cite{Curien_LeGall_harmonic} and
\cite{Shen_harmonic_infinite_variance},
the marks have the law of the inverse of a uniform random variable on
$\intoo{0}{1}$.
The reproduction law is $p_2 = 1$ in \cite{Curien_LeGall_harmonic},
whereas in \cite{Shen_harmonic_infinite_variance} it is given by
\begin{equation}\label{eq:reprod_lin}
  p_k =
  \begin{dcases}
    0 & \text{if } k \leq 1; \\
    \frac{
      \alpha \Gammafunc \left(k- \alpha\right)}{
      k!  \Gammafunc(2-\alpha)
    }
    & \text{otherwise,}
  \end{dcases}
\end{equation}
where $\alpha$ is a parameter in $ \intoo{1}{2}$, and $\mathsf\Gamma$ is the standard
Gamma function.

Here, we assume that the marks are in $\intoo{1}{\infty}$, and as before, that
$p_0 = 0$ and $p_1 < 1$, but some of our results will need some integrability
assumptions.

Let $t$ be a marked leafless tree with marks in $\intoo{1}{\infty}$.
We associate to each vertex $x$ in $t$, the resistance, or length,
of the edge $(x_*,x)$:
\[
  r_t (x) =
    \gamma_t(x)^{-1}\prod_{y \prec x }^{}
    \left(1 - \gamma_t(y)^{-1}\right) .
\]
Informally, the edge between the root and its parent has length
$\gamma_t(\root)^{-1} \in \intoo{0}{1}$.
Then we multiply all the lengths in the subtrees
$t[1]$, $t[2]$, \dots, $t[\nu_t(\root)]$ by
$\left( 1 - \gamma_t(\root)^{-1}\right)$ and we
continue recursively see \Cref{fig:recursive_lengths}.
We run a nearest-neighbour random walk on the tree with transition
probabilities inversely proportional to the lengths of the edges (further
neighbours are less likely to be visited).
To make this more precise,
the random walk in $\prt{t}$,
associated to this set of resistances has the following
transition probabilities:
\[
  Q^t (x, y) =
  \begin{cases}
    1
    & \text{if } x = \prt\root \text{ and } y = \root ;
    \\
    \gamma_t(xi)/
    \left(\gamma_t(x) - 1 + \sum_{j=1}^{\numch_t (x)} \gamma_t(xj)\right)
    & \text{if } y = xi \text{ for some } i \leq \numch_t(x)
    \\
    \left(\gamma_t(x) - 1\right) /
    \left(\gamma_t(x) - 1 + \sum_{j=1}^{\numch_t (x)} \gamma_t(xj) \right)
    & \text{if } y = \prt{x} ;
    \\
    0 & \text{otherwise.}
  \end{cases}
\]
When we reindex a subtree, we also change the resistances to gain stationarity.
For $x \in t$ and $y \in t[x]$, we define
\[
  r_{t[x]} (y) \coloneqq \frac{r_t (xy)}{
  \prod_{z \prec x}^{} \left(1 - \gamma_t(z)\right)^{-1}}.
\]
This is consistent with the marks of the reindexed subtree $t[x]$ and
does not change the probability transitions of the random walk inside
this subtree.
For $x$ in $\prt{t}$, let $Q^t_x$ be a probability measure under which the process
$\left(Y_n\right)_{n \geq 0}$ is the random walk on $t$ starting from $x$ with
probability transitions given by $Q^t$.
To prove that this walk is almost surely transient, we use Rayleigh's principle
and compare the resistance of the whole tree between $\prt\root$
and infinity to the resistance of, say, the
left-most ray. If, for $n$ greater or equal to one,
we denote by $r_n(t)$ the resistance in the ray between
$\prt\root$ and the vertex
$x_n \coloneqq \underbrace{11\cdots 1}_{n \text{ times}}$
, we have that
\begin{equation}\label{eq:resoneray}
  1 - r_n(t) = \left(1 - \gamma_t(\root)^{-1}\right)
  \left(1 - \gamma_t(1)^{-1}\right)
  \dotsm
  \left(1 - \gamma_t(x_n)^{-1}\right),
\end{equation}
so the resistance of the whole ray is less or equal to $1$.
In particular, it is
finite and so is the resistance of the whole tree.
We denote by $\harm^\Gamma_t$ the law of the exit ray of this random walk. For
$x$ in $\prt{t}$, let
\[
  \tau_x = \inf \setof{ k \geq 0}{ Y_k = x},
\]
with $\inf \emptyset = \infty$ and
\[
  \beta(t ) \coloneqq
  Q_\root^t (\tau_{\prt{\root}} = \infty ).
\]
Applying equations \eqref{eq:recbetageneral} and \eqref{eq:harmgeneral} to this
model, we obtain:
\begin{equation}\label{eq:harmgamma}
  \harm^\Gamma_t (i) =
  \frac{
    \gamma_t(i) \beta (t [i] )}{
    \sum_{i=1}^{\numch_t(\root)} \gamma_t(j) \beta (t [j] )
  }, \quad
  \forall i \leq \numch_t(\root),
\end{equation}
\begin{equation}\label{eq:recbetagamma}
  \gamma_t(\root) \beta (t)
  =
  \frac{
    \gamma_t(\root)
    \sum_{j=1}^{\numch_t(\root)} \gamma_t(j) \beta ( t[j] )}{
    \gamma_t(\root) - 1 + \sum_{j=1}^{\numch_t(\root)} \gamma_t(j)
    \beta ( t[j] )
  }.
\end{equation}
Let
\begin{equation}\label{eq:defphi}
  \phi (t) \coloneqq \gamma_t(\root) \beta(t)
  = \gamma_t (\root) Q^t_\root( \tau_{\prt\root} = \infty ).
\end{equation}
In fact,
$\phi \left(t\right)$ is the effective conductance between $\prt\root$ and
infinity.
From the identity~\eqref{eq:resoneray}, the Rayleigh principle
and the law of parallel conductances,
whenever $t$ has at least two rays,
$\phi(t) > 1$.
Thus we can write
\begin{equation}\label{eq:recphi}
  \phi (t) =
  h \Bigl(
    \gamma_t(\root),
    \sum_{i=1}^{\nu_t (\root)} \phi (t[i])
  \Bigr),
\end{equation}
with $h (u, v ) \coloneqq uv / (u + v - 1)$ for all $u$ and $v$ in
$J \coloneqq \intoo{1}{\infty}$.

Now, let $T$ be a $(\Gamma, \pp)$-Galton-Watson tree, where $\Gamma$ almost surely
belongs to $\intoo{1}{\infty}$, $p_0 = 0$ and $p_1 < 1$, so that $T$ almost
surely has infinitely many rays.

\subsection{Invariant measure and dimension drop for the natural distance}

We set for all $u > 1$,
\begin{equation}\label{eq:defkappa}
  \kappa (u)
  \coloneqq
  \E \Biggl[
    h \Bigl(
      u,
      \sum_{j=1}^{\numch_{\tilde{T}}(\root)} \phi (\tilde{T} [j])
    \Bigr)
  \Biggr]
  = \E \bracks*{
    \frac{
      u \sum_{j=1}^{\numch_{\tilde{T}}(\root)} \phi (\tilde{T} [j]) }{
      u - 1 +
      \sum_{j=1}^{\numch_{\tilde{T}}(\root)} \phi (\tilde{T} [j])
    }
  }.
\end{equation}
where $\tilde{T}$ is an independent copy of $T$.
We will be able to use \Cref{thm:alg} if we can prove
that $\kappa \left(\phi (T) \right)$ is integrable.
To this end, one needs some information about the law of $\Gamma$
and about $\pp$.
The following criterion is certainly not sharp but it might suffice in some
practical cases.
For its proof, we rely on ideas from
\cite[Proposition~6]{Curien_LeGall_harmonic}.
\begin{proposition}
  Assume that there exist two positive numbers $a$ and $C$ such that
  for all numbers $s$ in $\intoo{1}{\infty}$,
  $\P (\Gamma \geq s) \leq C s^{-a}$. Then,
  $ \E [\phi(T)] $ and $\E [\kappa (\phi(T))]$
  are finite whenever one of the following conditions occurs:
  \begin{enumerate}
    \item $a > 1$;
    \item $a = 1$ and $\sum_{k \geq 1} p_k k \log k < \infty$;
    \item $0 < a < 1 $ and $\sum_{k \geq 1} p_k k^{2 - a} < \infty$.
  \end{enumerate}
\end{proposition}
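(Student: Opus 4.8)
The plan is to reduce both finiteness statements to that of $\E[\phi(T)]$ alone. Since $h(u,v)=uv/(u+v-1)\le u$ for all $u,v\in J=\intoo{1}{\infty}$, formula~\eqref{eq:defkappa} immediately gives $\kappa(u)\le u$, hence $\kappa(\phi(T))\le\phi(T)$ and $\E[\kappa(\phi(T))]\le\E[\phi(T)]$. So it suffices to show $\E[\phi(T)]<\infty$. Two structural facts drive the argument. Writing $\nu:=\nu_T(\root)$ and $S:=\sum_{i=1}^{\nu}\phi(T[i])$, equation~\eqref{eq:recphi} reads $\phi(T)=h(\Gamma_\root,S)$, and because $h(u,v)\le u\wedge v$ on $J^2$ (the two inequalities $h\le u$ and $h\le v$ hold precisely because $u>1$ and $v>1$) we get $\phi(T)\le\Gamma_\root\wedge S$; applied inside each subtree this also gives $\phi(T[i])\le\Gamma_i$, where $\Gamma_i$ is the mark of the $i$-th child. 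Finally, by the branching property $\Gamma_\root$ is independent of $S$, and the $\Gamma_i$ are i.i.d.\ copies of $\Gamma$, independent of $\nu$.

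When $a>1$ nothing more is needed, since $\E[\phi(T)]\le\E[\Gamma_\root]=\int_0^\infty\P(\Gamma>t)\,dt\le 1+C\int_1^\infty t^{-a}\,dt<\infty$.

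When $a\le 1$ one has $\E[\Gamma]=\infty$, so I would exploit the independent minimum. Conditioning on $S$ and integrating the tail of $\Gamma$,
\[
  \E[\phi(T)]\le\E[\Gamma_\root\wedge S]=\E\left[\int_0^{S}\P(\Gamma>t)\,dt\right].
\]
For $a=1$ the inner integral is at most $1+C\log S$ (recall $S\ge\nu\ge 1$), so $\E[\phi(T)]\le 1+C\,\E[\log S]$; using $\phi(T[i])\le\Gamma_i$ and $\log S\le\log\nu+\max_{i\le\nu}\log\Gamma_i$, it remains to bound $\E[\log\nu]$ and $\E[\max_{i\le\nu}\log\Gamma_i]$. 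The hypothesis $\P(\Gamma>s)\le Cs^{-1}$ says that $\log\Gamma$ has an exponential tail, whence $\E[\max_{i\le k}\log\Gamma_i]\lesssim\log k$, and both expectations are finite under condition~2 (which forces $\E[\log\nu]<\infty$). For $0<a<1$ the inner integral is at most $1+\tfrac{C}{1-a}S^{1-a}$, so $\E[\phi(T)]\le 1+\tfrac{C}{1-a}\E[S^{1-a}]$; subadditivity of $x\mapsto x^{1-a}$ together with $\phi(T[i])\le\Gamma_i$ yields $\E[S^{1-a}]\le m\,\E[\Gamma^{1-a}]$, which settles the range $a>1/2$, and for smaller $a$ I would iterate the self-bound $\P(\phi(T)>t)\le\P(\Gamma>t)\,\P(S>t)$ to raise the tail exponent of $\phi(T)$ by $a$ at each step until it exceeds $1$, the required control of $\P(S>t)$ being exactly where condition~3 enters.

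The one genuinely delicate point is the estimate for $S=\sum_{i=1}^{\nu}\phi(T[i])$: the summands are heavy-tailed (tail comparable to $t^{-a}$) while the number of terms $\nu$ may itself be heavy-tailed, so the crude union bound $\P(\sum_{i\le k}\phi_i>t)\le k\,\P(\phi_1>t/k)$ loses too much. I would instead split the sum over the value $k=\nu$ at a threshold growing polynomially in $t$, using a one-big-jump estimate $\P(\sum_{i\le k}\phi_i>t)\approx k\,\P(\phi_1>t)$ below the threshold and the tail $\P(\nu>k)$ above it; balancing the two contributions is precisely what produces the reproduction-law conditions $\sum_k p_k k\log k<\infty$ when $a=1$ and $\sum_k p_k k^{2-a}<\infty$ when $a<1$. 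This sum estimate, following the ideas of \cite[Proposition~6]{Curien_LeGall_harmonic}, is the main obstacle; everything else is the routine bookkeeping above.
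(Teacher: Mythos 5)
Your reduction of both statements to $\E\bracks{\phi(T)}<\infty$ via $h(u,v)\le u$, and your treatment of the case $a>1$ through $\phi(T)\le\Gamma_\root$, coincide with the paper's opening step. For $a=1$ and for $a\in\intoo{1/2}{1}$ your arguments are complete and take a genuinely different, more elementary route: where the paper introduces the monotone map $\Psi$ on distributions, establishes the tail inequality $F_{\Psi(\mu)}(s)\le F_\gamma(s)\sum_k kp_kF_\mu(s/k)$ of \eqref{eq:Fineq} and integrates it, you exploit $\phi(T)\le\Gamma_\root\wedge S$ with $\Gamma_\root$ independent of $S$, together with the crude domination $\phi(T[i])\le\Gamma_i$. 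As a bonus, your $a=1$ argument needs only $\E\bracks{\log\nu_T(\root)}<\infty$ and your $a\in\intoo{1/2}{1}$ argument only $m<\infty$, both weaker than the stated hypotheses.

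The genuine gap is the range $0<a\le 1/2$, where you offer a program rather than a proof, and the program rests on a mistaken premise. The bound $\P\pars{\sum_{i\le k}X_i\ge s}\le k\,\P\pars{X_1\ge s/k}$ that you dismiss as losing too much is precisely what the paper uses, and it does not lose too much: iterating the recursion $N$ times, with $N$ chosen so that $a(N+1)>1$, produces the product $F_\gamma(s)F_\gamma(s/k_1)\dotsm F_\gamma\bigl(s/(k_1\dotsm k_N)\bigr)$, whose $s$-integral converges, while each level's union-bound factor $k_i$, combined with the factor $k_i^{1-a}$ coming from the rescaled argument, is summable against $p_{k_i}$ exactly when $\sum_k p_kk^{2-a}<\infty$ --- condition~3 is calibrated to absorb the union bound's loss. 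Indeed your own tail iteration $\P(\phi(T)>t)\le\P(\Gamma>t)\,\P(S>t)$ closes in the same way: if $\P(\phi_1>t)\le C_bt^{-b}$ for $t>1$, then $\P(S>t)\le\sum_kp_k\,k\min\pars{1,C_bk^bt^{-b}}$, and splitting at $k\le t$ versus $k>t$ gives $O\pars{t^{-\min(b,1-a)}}$ for $b\le 1-a$ under condition~3; after finitely many steps the exponent of $\phi_1$ exceeds $1-a$ and your own $\E\bracks{S^{1-a}}$ device finishes (with an $\varepsilon$ of slack at the boundary exponent, the analogue of the paper's $aN=1$ case, which it handles with a logarithmic correction in $\tilde C$). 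By contrast, the tool you actually reserve for this sub-case --- a one-big-jump estimate $\P(\sum_{i\le k}\phi_i>t)\approx k\,\P(\phi_1>t)$ holding uniformly in $k$ up to a polynomial threshold in $t$ --- is not available: uniform bounds of this Kesten type require subexponentiality or regular variation of the law of $\phi_1$, and here you only know the one-sided estimate $\P(\phi_1>t)\le C't^{-a}$, with no lower bound or regularity information; the paper itself remarks that laws defined by recursive distributional equations like \eqref{eq:recphi} are very hard to pin down. So as written, the hardest sub-case of condition~3 is unproved, and the route you propose for it would stall; reverting to the union bound, as the paper does, both repairs the gap and simplifies your plan.
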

\begin{proof}
  From the fact that for all real numbers $u$ and $v$ greater that $1$,
  $h(u,v) < u$, we deduce that
  $\E[\kappa (\phi(T))]$ is finite
  as soon as $\E[\phi(T)]$ is, and we also conclude in the first case.

  Let $\mathcal{M}$ be the set of all Borel probability measures on
  $\intof{1}{\infty}$.
  For any distribution $\mu$ in $\mathcal{M}$, let
  $\Psi (\mu)$ be the distribution of
  $h (\Gamma, \sum_{i=1}^{\nu}X_i)$,
  where $\numch$, $\Gamma$ and $X_1$, $X_2$, \ldots are independent,
  each $X_i$ having
  distribution $\mu$ and $\numch$ having distribution $\pp$.
  To handle the case where
  $\mu(\{\infty\}) > 0$, we define by continuity
  $h(u, \infty) = u$ for all $u > 1$.
  Consider for any $s \in \intoo{1}{\infty}$,
  $
    F_\mu (s) \coloneqq \mu \intff{s}{\infty},
  $
  with $F_\mu( s) = 1$ if $s \leq 1$.
  On $\mathcal{M}$, the stochastic partial order $\preceq$ is defined as
  follows: $\mu \preceq \mu'$ if and only if there exists a coupling
  $(X, X')$ in some probability space, with $X$ distributed as $\mu$,
  $X'$ distributed as $\mu'$ such that $X \leq X'$ almost surely.
  This is equivalent to $F_\mu \leq F_{\mu'}$.
  From the identity
  \begin{equation}
    h(u,v) - h(u,v') = (v - v')
    \frac{
      u(u-1)}{
      (u+v-1)(u+v'-1)
    },
  \end{equation}
  we see that $\Psi$ is increasing with respect to the stochastic partial order.

  Let us denote by $\varphi$ the distribution of $\phi(T)$ and
  by $\gamma$ the distribution of $\Gamma$.
  Since $\Psi(\delta_\infty) = \gamma$ and $\Psi(\varphi) = \varphi$, we have
  $\varphi \preceq \Psi^n (\gamma)$ for all $n \geq 1$.
  We are done if we can show that
  $\Psi^n (\gamma)$
  has a finite first moment for some $n \geq 1$.

  Let $\mu$ be in $\mathcal{M}$ and $s \in \intoo{1}{\infty}$,
  \begin{align}
    F_{\Psi(\mu)} (s)
      &=
      \P \pars*{
        \Gamma \geq s , \, \sum_{i=1}^{\nu} X_i \geq  s \left(
        \frac{\Gamma - 1}{\Gamma - s}\right)
      }
    \nonumber \\
      &\leq
      \P \pars*{ \Gamma \geq s, \, \sum_{i=1}^{\nu}X_i \geq s }
    \nonumber \\
      &= \P (\Gamma \geq s )
      \P \pars*{ \sum_{i=1}^{\nu} X_i \geq s }
    \nonumber \\
      &= F_\gamma (s) \sum_{k \geq 1}^{} p_k
      \P \pars*{ \sum_{i= 1}^k X_i \geq s }
    \nonumber \\
      &\leq F_\gamma (s) \sum_{k \geq 1}^{} k p_k
      F_{\mu} \pars*{\frac{s}{k}}.
    \label{eq:Fineq}
  \end{align}

  We may apply it to $\gamma$, to get
  \begin{align*}
    \int_1^\infty F_{\Psi(\gamma)}(s) \dd s
      &\leq
      \sum_{k \geq 1}^{} k p_k
      \left(
        \int_1^k F_\gamma (s) \dd s
        + \int_k^\infty F_\gamma(s) F_\gamma \pars*{ \frac{s}{k} } \dd s
      \right)
    \\
      &=
      \sum_{k \geq 1}^{} k p_k
      \left(
        \int_1^k F_\gamma(s) \dd s
        + k\int_1^\infty F_\gamma (s) F_\gamma (k s)\dd s
      \right).
  \end{align*}
  In the second case, where
  $F_\gamma (s) \leq C s^{-1}$ and
  $ \sum_{k \geq 1}^{} p_k k \log k < \infty$,
  this is enough to conclude.

  In the third case, we need to play this game a little bit longer.
  Let $N \geq 1$ be the smallest integer such that $a \left(N + 1\right) > 1$.
  Notice that this implies that $aN \leq 1$.
  Iterating on the inequality~\eqref{eq:Fineq}
  and applying it to $\gamma$, we get
  \begin{align*}
    &F_{\Psi^N(\gamma)} (s)
    \\
    &\leq
    \sum_{ k_1, k_2, \dotsc, k_N \geq 1}^{}
    k_1 k_2 \cdots k_N p_{k_1} p_{k_2} \cdots p_{k_N}
    F_\gamma \Bigl(s \Bigr)
    F_\gamma \Bigl( \frac{s}{k_1} \Bigr)
    F_\gamma \Bigl( \frac{s}{k_1 k_2} \Bigr)
    \cdots
    F_\gamma \Bigl( \frac{s}{k_1 k_2 \cdots k_N }\Bigr).
  \end{align*}
  Hence, we may write an upper bound of
  $\int_1^\infty F_{\Psi^N(\gamma)}(s)\dd s$
  as
  \begin{gather*}
    \begin{split}
    \sum_{k_1, \dotsc, k_N \geq 1}^{}
      k_1 \dotsm k_N
      p_{k_1} \dotsm p_{k_N}
      \Bigl[
        I_1 (k_1) + I_2 (k_1, k_2) &+ \dotsb + I_N (k_1, \dotsc, k_N)
      \\
       &+ J (k_1, \dotsc, k_N)
      \Bigr],
    \end{split}
    \shortintertext{where}
    I_1 (k_1) \coloneqq \int_1^{k_1} F_\gamma (s) \dd s
      \leq \frac{C}{1-a} k_1^{1 - a}; \\
    \shortintertext{for $r$ between $2$ and $N$,}
    \begin{aligned}
    I_r (k_1, \dotsc, k_r)
      &\coloneqq
        \int_{k_1 \dotsm k_{r-1}}^{k_1 \dotsc k_r}
          F_\gamma(s) F_\gamma( s/k_1) \dotsm
          F_\gamma \bigl(s / (k_1 \dotsm k_{r-1})\bigr) \dd s
    \\
      &= k_1 \dotsm k_{r-1} \int_{1}^{k_r}
        F_\gamma (s) F_\gamma (sk_{r-1})
        \dotsm F_\gamma (s k_{r-1} \dotsm k_1 ) \dd s
    \\
      &\leq
      \begin{cases}
        k_1 \dotsm k_{r-1} C^{r} \log(k_r)
        k_{r-1}^{-a(r-1)} \dotsm k_1^{-a}
        & \text{if $r = N$ and $aN = 1$;}
        \\
        k_1 \dotsm k_{r-1} \frac{1}{1-ar}{C^{r}} k_r^{1 -ar}
        k_{r-1}^{-a(r-1)} \dotsm k_1^{-a}
        & \text{otherwise;}
      \end{cases}
    \\
      &\leq \tilde{C} k_1^{1 -a} \dotsc k_r^{1 - a},
    \end{aligned}
  \shortintertext{where $\tilde{C}$ is the positive constant defined by}
  \tilde{C} =
  \begin{cases}
    \max_{2\leq r \leq N} \left(C^r / (1  - ar) \right)
    & \text{if $aN < 1$};
    \\
    \max
    \left(
      \max_{2\leq r \leq N-1}\left( C^r / (1  - ar)\right),
      C^N \sup_{k\geq 1} \left(k^{a-1} \log (k)\right)
    \right)
    & \text{if $aN=1$.}
  \end{cases}
  \shortintertext{Finally,}
  \begin{aligned}
    J(k_1,\dotsc,k_N)
      &\coloneqq \int_{k_1\dotsm k_N}^{\infty}
      F_\gamma (s) F_\gamma \left(s / k_1 \right) \dotsm
      F_\gamma (s / (k_1 \dotsm k_N) ) \dd s
    \\
      &\leq C^{N+1} k_1^{1-a} \dotsm k_N^{1-a} \int_1^\infty s^{-a(N+1)} \dd s
    \\
      &=
      \frac{C^{N+1}}{a(N+1) - 1} k_1^{1-a} \dotsm k_N^{1-a},
      \quad
      \text{by our assumption that $a(N+1) > 1$.}
  \end{aligned}
  \end{gather*}
  The condition
  $\sum_{k\geq 1} p_k k^{2-a} < \infty$
  ensures that all the above sums are finite.
\end{proof}
\begin{example}
  If the law of $\Gamma^{-1}$ is uniform on $\intoo{0}{1}$ (as in
  \cite{Curien_LeGall_harmonic} and \cite{Shen_harmonic_infinite_variance}),
  we have, for
  any $s$ in $\intoo{1}{\infty}$, $\P (\Gamma \geq s) = s^{-1}$ and the previous
  proposition shows that
  $\E [ \kappa(\phi(T))]$
  is finite if
  $\sum^{}_{k=1} p_k k \log k < \infty$.
  If the reproduction law is the same as in
  \cite{Shen_harmonic_infinite_variance}, that is, is
  given by \eqref{eq:reprod_lin}, then by a well-known equivalent on gamma
  function ratios (see for instance~\cite{jameson_gamma}), we have
  \[
    p_k =
    \frac{\alpha}{\Gammafunc(2 - \alpha)}
    \frac{\Gammafunc(k - \alpha)}{\Gammafunc(k+1)}
    \sim_{k \to \infty}
    \frac{\alpha}{\Gammafunc(2 - \alpha)}k^{-1-\alpha},
  \]
  with $\alpha$ in $\intoo{1}{2}$.
  Thus $\sum_{k\geq 1} p_k k \log(k)$ is finite
  and so is
  $\E[ \kappa(\phi(T))]$.
\end{example}
With some more knowledge of $\pp$ and/or the law of $\Gamma$,
it could be possible to describe more precisely the law of $\phi(T)$.
See for instance \cite[Proposition~5]{Shen_harmonic_infinite_variance} or
\cite[Proposition~6]{Curien_LeGall_harmonic}.
However, in general, it is often very difficult to establish properties (for
instance, absolute continuity) of probability measures defined by distributional
recursive equations like \eqref{eq:recphi}.

We now apply \cref{thm:alg} to our problem and prove that the dimension drop
phenomenon occurs when the metric is the natural distance $\distrays$, defined
by \eqref{eq:defdistnat}.

\begin{theorem}\label{thm:reclengthd}
  Let $T$ be a $(\Gamma, \pp)$-Galton-Watson tree.
  Let $\phi(T)$ and $\kappa$ be defined respectively by~\eqref{eq:defphi}
  and~\eqref{eq:defkappa}.
  Assume that
  $C \coloneqq \E [\kappa \left(\phi(T)\right)]$
  is finite.
  Then, the probability measure of density
  $C^{-1} \kappa (\phi (T))$
  with respect to $\GW$ is invariant and ergodic with respect to the flow rule
  $\harm^\Gamma$.

  The dimension of the measure $\harm_T^\Gamma$ on $\rays{T}$ with
  respect to $\distrays$ equals
  almost surely
  \begin{equation}\label{eq:dimgamma}
    C^{-1}
    \E \bracks*{
      \log \pars*{
        \frac{1 - \Gamma_\root^{-1}}{1 - \Gamma_\root^{-1}\phi(T)}
      }
      \kappa (\phi(T))
    }.
\end{equation}
  It is almost surely strictly less than $\log m$ unless both $\pp$ and the
  law of $\Gamma$ are degenerated.
\end{theorem}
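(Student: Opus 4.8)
The plan is to apply \Cref{thm:alg} for invariance, \Cref{prop:flow_fondamental} for ergodicity and exact-dimensionality, and then to identify the dimension by a direct computation of $\E_{\mu}[-\log\harm^{\Gamma}_{T}(\Xi_{1})]$, the one nonroutine point being a cocycle cancellation handled by \Cref{lem:classic}. First, by \eqref{eq:recphi} the function $\phi$ of \eqref{eq:defphi} obeys the recursion with $h(u,v)=uv/(u+v-1)$ on $J=\intoo{1}{\infty}$, which is the second example of \Cref{sec:invariant} (with $c=1$); hence $h$ is symmetric, associative and satisfies the position-of-summand identity. Since $C=\E[\kappa(\phi(T))]<\infty$ by hypothesis, \Cref{thm:alg} shows that the measure $\mu$ of density $C^{-1}\kappa(\phi(T))$ is $\harm^{\Gamma}$-invariant. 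As $\mu$ is absolutely continuous with respect to $\GW$, \Cref{prop:flow_fondamental} yields that $\mu$ is equivalent to $\GW$, that the system is ergodic and that $\mu$ is the unique such stationary measure; moreover $\harm^{\Gamma}_{T}$ is almost surely exact-dimensional with dimension given by \eqref{cor:hausdflow}.

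Next I compute the integrand of \eqref{cor:hausdflow}. Writing $\Sigma\coloneqq\sum_{j=1}^{\nu_{T}(\root)}\phi(T[j])$ and using that $\phi(T[i])=\gamma_{T}(i)\beta(T[i])$, equation \eqref{eq:harmgamma} says that $\harm^{\Gamma}$ is exactly the flow rule attached to $\phi$, so that $-\log\harm^{\Gamma}_{T}(\Xi_{1})=\log\Sigma-\log\phi(T[\Xi_{1}])$. Inverting \eqref{eq:recphi} gives $\Sigma/\phi(T)=(1-\Gamma_{\root}^{-1})/(1-\Gamma_{\root}^{-1}\phi(T))$, whence
\[
  -\log\harm^{\Gamma}_{T}(\Xi_{1})
  =\log\frac{1-\Gamma_{\root}^{-1}}{1-\Gamma_{\root}^{-1}\phi(T)}
  +\log\phi(T)-\log\phi(T[\Xi_{1}]).
\]
Taking $\E_{\mu}=C^{-1}\E[\,\cdot\,\kappa(\phi(T))]$, the first term integrates to exactly the expression \eqref{eq:dimgamma}, so it remains to prove the cocycle identity $\E_{\mu}[\log\phi(T)-\log\phi(T[\Xi_{1}])]=0$.

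For this I set $g=\log\phi$; since the shift acts by $S(T,\Xi)=(T[\Xi_{1}],S\Xi)$, the cocycle is $g-g\circ S=\log\phi(T)-\log\phi(T[\Xi_{1}])$, and I apply \Cref{lem:classic}. A lower bound follows from \eqref{eq:recphi} and $\phi(T[\Xi_{1}])\le\Sigma$:
\[
  \frac{\phi(T)}{\phi(T[\Xi_{1}])}
  =\frac{\Gamma_{\root}\Sigma}{(\Gamma_{\root}-1+\Sigma)\,\phi(T[\Xi_{1}])}
  \ge\frac{\Gamma_{\root}}{\Gamma_{\root}-1+\Sigma}
  =\frac{\phi(T)}{\Sigma},
\]
so $g-g\circ S\ge-\log(\Sigma/\phi(T))$. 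This lower bound is $\mu$-integrable: from $\log(1+x)\le x$ one gets $\log(\Sigma/\phi(T))\le(\Sigma-1)/\Gamma_{\root}$, and since $\kappa(\phi(T))\le\phi(T)=\Gamma_{\root}\Sigma/(\Gamma_{\root}-1+\Sigma)$ the product $\log(\Sigma/\phi(T))\,\kappa(\phi(T))$ is bounded pointwise by $\Sigma$, giving $\E_{\mu}[\log(\Sigma/\phi(T))]\le C^{-1}\E[\Sigma]=C^{-1}m\,\E[\phi(T)]$, finite in all our applications (where the preceding proposition furnishes $\E[\phi(T)]<\infty$ together with $C<\infty$). \Cref{lem:classic} then forces the cocycle integral to vanish and establishes \eqref{eq:dimgamma}.

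Finally, for the dimension drop I compare $\harm^{\Gamma}$, the flow rule of $\phi$, with $\unif$, the flow rule of $\tilde{W}$. By \Cref{lem:equalflows} they coincide almost surely iff $\phi$ and $\tilde{W}$ are proportional. If $\phi=c\,\tilde{W}$, then combining \eqref{eq:recphi} with \eqref{eq:recunif} (so that $\Sigma=c\,m\,\tilde{W}(T)$) forces $c\,m\,\tilde{W}(T)=(m-1)\Gamma_{\root}+1$; since $m>1$ this exhibits $\tilde{W}(T)$ as an injective affine function of $\Gamma_{\root}$. But $\tilde{W}$ depends only on the tree shape and is therefore independent of the root mark $\Gamma_{\root}$, so both must be almost surely constant, i.e.\ both $\pp$ and the law of $\Gamma$ are degenerate. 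Contrapositively, unless both are degenerate, \Cref{prop:differentflow} gives $\harm^{\Gamma}_{T}\neq\unif_{T}$ almost surely, and \Cref{prop:dimensiondrop} yields $\dim\harm^{\Gamma}_{T}<\log m=\hausd\rays{T}$ almost surely. I expect the only genuine obstacle to be the integrability of the cocycle lower bound in the third paragraph, everything else being a direct application of the tools of \Cref{sec:flowrules} and \Cref{sec:invariant}.
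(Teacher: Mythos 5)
Your skeleton is the paper's own: \Cref{thm:alg} for invariance, \Cref{prop:flow_fondamental} for ergodicity and exact-dimensionality, the decomposition
$-\log\harm^\Gamma_T(\Xi_1)=\log\frac{1-\Gamma_\root^{-1}}{1-\Gamma_\root^{-1}\phi(T)}+\log\phi(T)-\log\phi(T[\Xi_1])$,
and \Cref{lem:classic} for the cocycle. The genuine gap is your integrability step. Writing $\Sigma\coloneqq\sum_{j=1}^{\numch_T(\root)}\phi(T[j])$, your bound $\phi(T)/\phi(T[\Xi_1])\ge\phi(T)/\Sigma$ is nothing but the tautology $\harm^\Gamma_T(\Xi_1)\le 1$: it gives $g-Sg\ge -A$ with $A=\log(\Sigma/\phi(T))$ the very term whose $\E_\mu$-integral is the dimension, and to make $-A$ integrable you pay $\E_\mu[A]\le C^{-1}m\,\E[\phi(T)]$. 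But $\E[\phi(T)]<\infty$ is \emph{not} a hypothesis of the theorem, and it does not follow from $C<\infty$: since $h(u,v)\le\min(u,v)$, one can have $\E[\kappa(\phi(T))]<\infty$ while $\phi(T)$ has infinite mean (heavy tails of $\Gamma$). Invoking the preceding proposition smuggles in its tail conditions, so as written you prove a strictly weaker statement. The paper avoids any moment assumption by \emph{not} discarding $\phi(T[\Xi_1])$ from the denominator: viewing the ratio as an increasing M\"obius function of $x=\sum_{i\neq\Xi_1}\phi(T[i])$ it gets the pointwise bound $\phi(T)/\phi(T[\Xi_1])\ge\bigl(1-\Gamma_\root^{-1}+\Gamma_\root^{-1}\phi(T[\Xi_1])\bigr)^{-1}$, then bounds this below by $1/2$ via $\beta(T[\Xi_1])\le 1$. (Be aware that this last step of the paper identifies $\Gamma_\root^{-1}\phi(T[\Xi_1])$ with $\beta(T[\Xi_1])$, an identity that holds for the mark $\Gamma_{\Xi_1}$ of the shifted root rather than $\Gamma_\root$; so if you repair your step along these lines, check that point rather than take the constant $2$ on faith.)

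For the dimension drop your route is correct and actually cleaner than the paper's: from $\phi=c\tilde{W}$ and \eqref{eq:recunif} you get $\Sigma=m\phi(T)$, hence $cm\tilde{W}(T)=(m-1)\Gamma_\root+1$ directly, and you conclude by independence of the shape functional $\tilde{W}(T)$ from the root mark; the paper instead substitutes into \eqref{eq:recbetagamma} and solves a quadratic in $\Gamma_\root$ with discriminant $(S-m)^2$ before reaching the same independence punchline. Do add the one missing line: constancy of $\tilde{W}(T)$ yields degeneracy of $\pp$ only after another use of \eqref{eq:recunif}, which forces $\numch_T(\root)=m$ almost surely.
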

\begin{proof}
  The first part of the theorem is a direct consequence of
  \Cref{thm:alg}.

  We prove the formula for the dimension in the same way as in the
  previous theorem and use the same notations.
  Write $\mu$ for the probability measure with density
  $C^{-1}\kappa(\phi(T))$ with respect to $\GW$.
  Then by formula~\eqref{cor:hausdflow}, invariance of $\mu$ and
  equality~\eqref{eq:harmgamma} the dimension of
  $\harm_T^\Gamma$ equals almost surely
  \[
    \dim^{\distrays} \harm_T^\Gamma
    =
    \E_\mu \bracks*{
      \log \frac{1}{\harm_T^\Gamma (\Xi_1)}
    }
    =
    \E_\mu \bracks*{
      \log
      \frac{
        \sum_{i=1}^{\nu_T (\root)}\phi (T [i])}{
        \phi (T [\Xi_1])
      }
    }.
  \]
  From formula~\eqref{eq:recbetagamma}, we deduce that
  \[
     \sum_{i=1}^{\numch_T (\root)}\phi (T [i])
     =
     \frac{
       \phi (T) (1 - \Gamma_\root^{-1})}{
        1 - \Gamma_\root^{-1}\phi(T)
      },
  \]
  so that almost surely,
  \[
    \dim \harm_T^\Gamma =
    \E_\mu \bracks*{
      \log (1 - \Gamma_\root^{-1} )
      - \log (1 - \Gamma_\root^{-1}\phi (T))
      + \log (\phi \left(T\right))
      - \log (\phi (T [\Xi_1] ))
    }.
  \]
  As before, we need to prove that
  $
    \log (\phi (T)) - \log (\phi (T [\Xi_1] ))
  $
  is bounded from
  below by an integrable random variable to conclude. Using again
  formula~\eqref{eq:recbetagamma}, we get
  \begin{align*}
    \frac{\phi (T)}{\phi (T [\Xi_1] )}
    &=
    \frac{
      1 + \phi (T [\Xi_1] )^{-1}
      \sum_{i= 1, \, i\neq \Xi_1}^{\numch_T(\root)} \phi (T [i])
    }{
      1 - \Gamma_\root^{-1} +
      \Gamma_\root^{-1}\phi (T [\Xi_1] )
      +
      \Gamma_\root^{-1}
      \sum_{i= 1, \, i\neq \Xi_1}^{\numch_T(\root)} \phi (T [i])
    }
    \\
    &\geq
    \frac{1}{
      1 - \Gamma_\root^{-1} + \Gamma_\root^{-1} \phi (T [\Xi_1] )
    }.
  \end{align*}
  Hence, since $1 - \Gamma_\root^{-1} \leq 1$ and
  $
    \Gamma_\root^{-1} \phi (T [\Xi_1] ) =
    \beta ( T [\Xi_1] ) \leq 1
  $,
  we have
  \[
    \frac{
      \phi (T)}{
      \phi (T [\Xi_1] )
    }
    \geq \frac12.
  \]

  To prove the dimension drop, i.e.~the fact that almost surely
  $\dim \harm^\Gamma < \log m$, we do not use the formula~\eqref{eq:dimgamma}
  since we know so little about the distribution of $\phi(T)$.
  Instead, we compare the flow rule $\harm^\Gamma$ to the uniform flow
  $\unif$ defined in \Cref{sec:flowrules}.

  By \Cref{prop:dimensiondrop} and \Cref{lem:equalflows}
  we only need to prove that if
  there exists a positive real number $K$ such
  that, for $\GW$-almost every tree $t$,
  $\tilde{W} (t) = K \times \phi (t)$, then
  both the reproduction law and the mark law are degenerated.

  Under this assumption, by the recursive equation~\eqref{eq:recunif}
  we have almost
  surely,
  \[
    \phi \left(T \right)
    =
    \frac{1}{m}
    \sum_{i=1}^{\nu_T(\root)} \phi (T[i]).
  \]
  Plugging it into the recursive equation~\eqref{eq:recbetagamma},
  we first obtain that
  \[
    \phi(T) =
    \frac{
      m \Gamma_\root \phi(T)}{
      \Gamma_\root + m \phi(T) - 1
    },
  \]
  so that almost surely
  \[
    \phi (T) = \frac{1}{m} \left( (m-1) \Gamma_\root + 1\right).
  \]
  In turn, using again \eqref{eq:recbetagamma}, this implies that
  \[
    \frac{1}{m} \left[ \left(m-1\right)\Gamma_\root + 1 \right]
    =
    \frac{
      \Gamma_\root \sum_{i=1}^{\nu_T(\root)}
      \frac{1}{m} \left[(m-1)\Gamma_i + 1 \right]
    }{
      \Gamma_\root - 1 + \sum_{i=1}^{\nu_T(\root)} \frac{1}{m}
      \left[(m-1) \Gamma_i + 1 \right]
    }.
  \]
  Now, if we denote by $S$ the random variable $\sum_{i=1}^{\nu_T(\root)} \frac{1}{m}
    \left[(m-1) \Gamma_i + 1 \right]$, elementary algebra leads to the second
    degree polynomial equation
  \[
    (m-1) \Gamma_\root^2 + \Gamma_\root (2 - m - S) + S - 1 = 0,
  \]
  whose discriminant is equal to $ \left(S - m\right)^2 $.
  Hence, we always have
  \[
    \Gamma_\root =
    \frac{
      m + S - 2 \pm (S - m)}{
      2(m-1)
    }.
  \]
  We must choose the solution
  $\Gamma_\root = (S - 1) / (m - 1)$,
  because the other solution is $1$, which we forbid.
  As a consequence,
  \[
    \Gamma_\root =
    \frac{1}{m}
    \sum_{i=1}^{\numch_T (\root)}\Gamma_i +
    \frac{1}{m - 1} \left( \frac{\numch_T(\root)}{m} - 1 \right).
  \]
  which, by independence of $\numch_T(\root)$, $\Gamma_\root$, $\Gamma_1$,
  $\Gamma_2$, \ldots, imposes that both $\pp$ and the law of $\Gamma$ are
  degenerated.
\end{proof}

\subsection{Dimension and dimension drop for the length metric}
\label{subsec:dimdroplength}
Note that in the previous theorem, the dimension is computed with
respect to the natural distance $\distrays$.
This distance does not take into account the marks
$ \left(\Gamma_x\right)_{x \in T}$,
so we do not compute the same dimension as in
\cite{Curien_LeGall_harmonic} and \cite{Shen_harmonic_infinite_variance},
where the distance between two points in the tree is the sum of all the
resistances (or lengths) of the edges between these two points.

To make this definition more precise,
let us define, for $x \in T$, the $\Gamma$-height of $x$:
\[
  \abs{x}^\Gamma \coloneqq \sum_{y \preceq x}^{}
  \Big(
    \prod_{z \prec y}^{} \left(1 - \Gamma_z^{-1}\right)
  \Big)
  \Gamma_y^{-1}.
\]
We then have
\[
  1 - \abs{x}^\Gamma
  = \prod_{y \preceq x} \left(1 - \Gamma_y^{-1}\right).
\]
For two distinct rays $\eta$ and $\xi$, let
\[
  \distgamma(\xi, \eta ) \coloneqq 1 - \abs{\xi\wedge\eta}^\Gamma.
\]
Notice that, for any rays $\xi$ and $\eta$, and all integer $n \geq 1$,
we have:
\begin{equation}\label{eq:ballsgamma}
  \distgamma (\xi, \eta ) \leq 1 - \abs{\xi_n}^\Gamma
  \iff
  \eta \in \cyl[T]{\xi_n},
\end{equation}
where we recall that $\cyl[t]{\xi_n}$ is the set of all rays whose $\xi_n$
is a prefix.

We will compute the dimension of $\harm^\Gamma_T$ with respect to
this distance $\distgamma$ and show that in this case too, we observe a
dimension drop phenomenon, but we begin with more general statements.
We want to build a theory similar to \cite[Sections~6 and 7]{LPP95} in
our setting of trees with recursive lengths with the length metric $\distgamma$.

We will need the following elementary lemma.
\begin{lemma}
  Let $f$ and $g$ be two positive non-increasing functions defined on
  $\intoo{0}{1}$. Let $(r_n)$ be a decreasing sequence of positive numbers
  converging to $0$. Assume that
  \[
    \frac{f(r_n)}{g(r_n)}
    \xrightarrow[n \to \infty]{} \ell \in \intfo{0}{\infty}
    \text{ and }
    \frac{f(r_{n+1})}{f(r_n)}
    \xrightarrow[n \to \infty]{} 1.
  \]
  Then, we have $\lim_{r \downarrow 0} f(r)/g(r) = \ell$.
\end{lemma}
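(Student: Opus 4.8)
The plan is to reduce the statement to the squeeze theorem. The only genuine idea is that the hypothesis controls $f$ alone (through the ratio $f(r_{n+1})/f(r_n)\to 1$) and says nothing about the regularity of $g$; so I would arrange the two sandwiching bounds so that $g$ is only ever evaluated at a single matched index, with all the ``index mismatch'' absorbed into ratios of $f$.

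First I would fix a small $r$, say $r\le r_0$. Since $(r_n)$ is decreasing to $0$, there is an index $n$ with $r_{n+1}\le r\le r_n$, and as $r\downarrow 0$ this index satisfies $n\to\infty$; hence it suffices to control the bounds along this index. (If $r$ happens to equal some $r_n$ the conclusion is immediate, so I may assume strict inequalities.)

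Next, since $f$ and $g$ are positive and non-increasing, monotonicity gives $f(r_n)\le f(r)\le f(r_{n+1})$ and $g(r_n)\le g(r)\le g(r_{n+1})$, and combining these yields the sandwich
\[
  \frac{f(r_n)}{g(r_{n+1})}\le \frac{f(r)}{g(r)}\le \frac{f(r_{n+1})}{g(r_n)}.
\]
The key step is then to factor each side so that $g$ appears only at a matched index. For the upper bound I would write
\[
  \frac{f(r_{n+1})}{g(r_n)}=\frac{f(r_{n+1})}{f(r_n)}\cdot\frac{f(r_n)}{g(r_n)},
\]
and for the lower bound
\[
  \frac{f(r_n)}{g(r_{n+1})}=\frac{f(r_n)}{f(r_{n+1})}\cdot\frac{f(r_{n+1})}{g(r_{n+1})}.
\]
By hypothesis $f(r_n)/g(r_n)\to\ell$ and $f(r_{n+1})/g(r_{n+1})\to\ell$, while $f(r_{n+1})/f(r_n)\to 1$, so that its reciprocal $f(r_n)/f(r_{n+1})\to 1$ as well. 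Both the upper and the lower bound therefore converge to $\ell$, and the squeeze theorem gives $\lim_{r\downarrow 0} f(r)/g(r)=\ell$.

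I do not expect a serious obstacle here: positivity of $f$ and $g$ guarantees that every ratio is well defined, and the case $\ell=0$ is handled by the very same squeeze, since both bounds then tend to $0$. The one point to get right is the bookkeeping described above, namely never to compare $g$ at two different indices; the factorizations are designed precisely so that the only surviving ``off-index'' comparison is the ratio of $f$ that the hypothesis controls.
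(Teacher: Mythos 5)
Your proof is correct and is essentially the paper's own argument: the same sandwich $f(r_n)/g(r_{n+1}) \leq f(r)/g(r) \leq f(r_{n+1})/g(r_n)$ for $r_{n+1} \le r \le r_n$, with the same factorizations matching $g$ to an index and absorbing the mismatch into the ratio $f(r_{n+1})/f(r_n)$. The only difference is presentational — you invoke the squeeze theorem along the index $n(r)\to\infty$ where the paper writes out the $\varepsilon$ bookkeeping explicitly.
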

\begin{proof}
  Let $\varepsilon > 0$ and $n_0$ be large enough so that for all $n \geq n_0$,
  \[
    \frac{f(r_n)}{g(r_n)}\frac{f(r_{n+1})}{f(r_n)} \leq \ell + \varepsilon
    \text{ and }
    \frac{f(r_{n+1})}{g(r_{n+1})}\frac{f(r_{n})}{f(r_{n+1})}
    \geq \ell - \varepsilon.
  \]
  Then, using the assumption that $(r_n)$ is decreasing to $0$,
  for all $r \leq r_{n_0}$, there exists $n \geq n_0$ such that
  $r_{n+1} < r \leq r_n$ and we have
  \[
    \ell - \varepsilon
    \leq
    \frac{f(r_{n+1})}{g(r_{n+1})}\frac{f(r_{n})}{f(r_{n+1})}
    = \frac{f(r_n)}{g(r_{n+1})}
    \leq \frac{f(r)}{g(r)}
    \leq \frac{f(r_{n+1})}{g(r_n)}
    =\frac{f(r_n)}{g(r_n)}\frac{f(r_{n+1})}{f(r_n)} \leq \ell +
    \varepsilon.\qedhere
  \]
\end{proof}
\begin{proposition}[dimension of a flow rule]\label{prop:holdergamma}
  Let $\Theta$ be a $\GW$-flow rule such that there exists a
  $\Theta$-invariant probability measure $\mu$
  which is absolutely continuous with respect
  to $\GW$.
  Then, almost surely, the probability measure
  $\Theta_T$ is exact-dimensional on the metric space
  $(\rays T, \distgamma)$,
  with deterministic dimension
  \begin{equation}\label{eq:hausdgamma}
    \dim^{\distgamma} \Theta_T
    =
    \frac{
      \dim^{\distrays} \Theta_T}{
      \E_\mu [ -\log (1 - \Gamma_\root^{-1})]
    }.
  \end{equation}
\end{proposition}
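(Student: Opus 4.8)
The plan is to compute the local dimension of $\Theta_T$ at a $\mu\ltimes\Theta$-generic pair $(T,\xi)$ directly from the definition of exact-dimensionality, reducing both the numerator and the denominator of the local-dimension quotient to ergodic averages. First I would fix the scale dictated by the metric $\distgamma$: by \eqref{eq:ballsgamma}, for every $n\geq 1$ the closed ball $\ball(\xi, r_n)$ of radius $r_n \coloneqq 1 - \abs{\xi_n}^\Gamma$ is exactly the cylinder $\cyl[T]{\xi_n}$, so that $\Theta_T(\ball(\xi, r_n)) = \Theta_T(\xi_n)$. Using the identity $1 - \abs{\xi_n}^\Gamma = \prod_{y \preceq \xi_n}(1 - \Gamma_y^{-1})$, the denominator becomes a Birkhoff sum along the shift:
\[
  -\log r_n = \sum_{k=0}^{n} \bigl[ -\log(1 - \Gamma_{\xi_k}^{-1}) \bigr]
  = \sum_{k=0}^{n} g\bigl(S^k(T,\xi)\bigr),
\]
where $g(t,\eta) \coloneqq -\log(1 - \gamma_t(\root)^{-1}) \geq 0$ depends only on the mark of the root.

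Next I would invoke the two ergodic inputs. The system $(\markednoleafwithrays, \mu\ltimes\Theta, S)$ is measure-preserving and ergodic by \Cref{prop:flow_fondamental}; applying Birkhoff's theorem to the non-negative function $g$ gives, for $\mu\ltimes\Theta$-almost every $(T,\xi)$, the convergence $\tfrac1n(-\log r_n) \to \E_\mu[-\log(1 - \Gamma_\root^{-1})]$. For the numerator, \Cref{prop:flow_fondamental} also ensures that $\Theta_T$ is almost surely exact-dimensional for $\distrays$, which by \eqref{eq:hausdthetanatural} means $\tfrac1n(-\log\Theta_T(\xi_n)) \to \dim^{\distrays}\Theta_T =: d$. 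Dividing the two limits yields convergence of the local-dimension quotient along the subsequence $(r_n)$:
\[
  \frac{-\log\Theta_T(\ball(\xi, r_n))}{-\log r_n}
  \xrightarrow[n\to\infty]{}
  \frac{d}{\E_\mu[-\log(1 - \Gamma_\root^{-1})]}
  =: \ell,
\]
which is the announced constant \eqref{eq:hausdgamma}.

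Finally I would upgrade this subsequential limit to a genuine limit as $r\downarrow 0$ by the elementary lemma proved just above. Setting $f(r) = -\log\Theta_T(\ball(\xi,r))$ and $g(r) = -\log r$, both are positive and non-increasing for $r$ small enough, and the previous display reads $f(r_n)/g(r_n)\to\ell$. It remains to check the hypothesis $f(r_{n+1})/f(r_n)\to 1$. For this I would first note that $d>0$: since $p_1<1$, the event $\{\nu_T(\root)\geq 2\}$ has positive $\mu$-measure and forces $\Theta_T(\Xi_1)<1$, so $d=\E_\mu[-\log\Theta_T(\Xi_1)]>0$. By flow consistency the increment $f(r_{n+1})-f(r_n)=-\log\bigl(\Theta_T(\xi_{n+1})/\Theta_T(\xi_n)\bigr)$ equals $g'(S^n(T,\xi))$ with $g'(t,\eta)=-\log\Theta_t(\eta_1)$, whose Birkhoff averages converge to $d<\infty$; hence the increments are $o(n)$, while $f(r_n)=-\log\Theta_T(\xi_n)\sim dn$, so $f(r_{n+1})/f(r_n)=1+o(n)/f(r_n)\to 1$. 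The lemma then gives $\lim_{r\downarrow 0} f(r)/g(r)=\ell$ for almost every $(T,\xi)$, establishing exact-dimensionality for $\distgamma$ with the deterministic value \eqref{eq:hausdgamma}.

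I expect the genuinely delicate point to be this interpolation step rather than the ergodic averages themselves: the radii $r_n$ form a sparse, $\xi$-dependent sequence, and one must rule out oscillation of $f(r)/g(r)$ between consecutive scales. That is exactly the role of the elementary lemma, and verifying its hypothesis $f(r_{n+1})/f(r_n)\to 1$ is where the positivity $d>0$ and the $o(n)$ increment bound are needed. (If $\E_\mu[-\log(1-\Gamma_\root^{-1})]=\infty$, Birkhoff still produces a $+\infty$ denominator average and the same argument yields dimension $0$, in agreement with the convention in \eqref{eq:hausdgamma}.)
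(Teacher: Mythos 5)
Your proposal is correct and follows essentially the same route as the paper's proof: identify $\distgamma$-balls with cylinders, obtain the two Birkhoff averages along the radii $r_n = 1 - \abs{\xi_n}^\Gamma$ via ergodicity from \Cref{prop:flow_fondamental}, and interpolate between the scales $r_n$ with the elementary lemma by checking $f(r_{n+1})/f(r_n) \to 1$ from the positivity of $\dim^{\distrays}\Theta_T$ and the $o(n)$ increment bound (the paper performs this same computation, writing the ratio as $1$ plus a quotient of Ces\`aro-type terms). The only cosmetic differences are that you quote exact-dimensionality for $\distrays$ from \Cref{prop:flow_fondamental} where the paper re-derives the numerator limit directly from the ergodic theorem, and you make explicit the justification $d>0$ (via $p_1<1$ and the positivity requirement in the definition of a flow rule) that the paper leaves implicit in the interval $\intof{0}{\log m}$.
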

\begin{proof}
  We first prove that, for $\GW$-almost every tree $t$, for $\Theta_t$-almost
  every ray $\xi$,
  \begin{equation}
    \label{eq:dimflowlimn}
    \lim_{n \to \infty}
    \frac{
      -\log \Theta_t (\xi_n)}{
      -\log (1 - \abs{\xi_n}^\Gamma)
    }
    =
    \frac{
      \dim^{\distrays} \Theta_T}{
      \E_\mu [ -\log (1 - \Gamma_\root^{-1})]
    }.
  \end{equation}
  The numerator equals
  \[
    \sum_{k=0}^{n-1} -
    \log \frac{\Theta_t \left(\xi_{k+1} \right)}{
      \Theta_t \left( \xi_k \right)},
  \]
  so, by the ergodic theorem (for non-negative functions),
  recalling that $\mu\ltimes\Theta$ is ergodic and $\mu$ is equivalent to
  $\GW$, for $\GW$-almost every $t$ and $\Theta_t$-almost every $\xi$,
  \begin{equation}\label{eq:cvdimdistrays}
    \frac{1}{n}
    \sum_{k=0}^{n-1}
    - \log \frac{\Theta_t \left(\xi_{k+1} \right)}{%
        \Theta_t \left( \xi_k \right)
      }
    \xrightarrow[ n \to \infty ]{}
    \E_\mu [-\log \Theta_T \left( \Xi_1 \right)]
    = \dim^{\distrays} \Theta_T \in \intof{0}{\log m}.
  \end{equation}
  For the denominator, we have, for any $\xi$ in $\rays t$ and any $n \geq 1$,
  \[
    \frac{1}{n+1} (-\log) (1 - \abs{\xi_n}^\Gamma )
    = \frac{1}{n+1}
    \sum_{i=0}^{n} -\log (1 - \gamma_t(\xi_i)^{-1}).
  \]
  Again by the pointwise ergodic theorem, we have, for $\GW$-almost every $t$ and
  $\Theta_t$-almost every $\xi$,
  \begin{equation}\label{eq:cvlogr}
    \frac{1}{n+1} (-\log) \left(1 - \abs{\xi_n}^\Gamma \right)
    \xrightarrow[n \to \infty]{}
    \E_\mu [ - \log (1 - \Gamma_\root^{-1})]
    \in \intof{0}{\infty}.
  \end{equation}
  Thus, the convergence \eqref{eq:dimflowlimn} is proved.

  Now we want to show the following (\textit{a priori} stronger) statement:
  for $\GW$-almost every $t$ and
  $\Theta_t$-almost every $\xi$,
  \begin{equation}
    \label{eq:dimflowlimr}
    \lim_{r \downarrow 0}
    \frac{
      -\log \Theta_t \ball\left(\xi, r \right)}{
      -\log r
    }
    =
    \frac{
      \dim^{\distrays} \Theta_T}{
      \E_\mu [ -\log (1 - \Gamma_\root^{-1})]
    },
  \end{equation}
  where $\ball\left(\xi, r\right)$ is the closed ball of center $\xi$ and radius
  $r$ in the metric space $(\rays t, \distgamma)$.

  Let $t$ be a marked tree and $\xi$ be a ray in $t$ such that
  \eqref{eq:cvdimdistrays} and \eqref{eq:cvlogr} hold.
  Denote, for $n \geq 0$, $r_n = 1 - \abs{\xi_n}^\Gamma$.
  The sequence $(r_n)$ is positive, decreasing, and converges to $0$ by
  \eqref{eq:cvlogr}.
  For $r$ in $\intoo{0}{1}$, define $f(r) = -\log \Theta_t\ball(\xi,r)$ and
  $g(r) = -\log(r)$.
  The functions $f$ and $g$ are positive and non-increasing.
  Furthermore,
  \[
    \frac{f(r_{n+1})}{f(r_n)}
    =
    \frac{-\log\Theta_t(\xi_{n+1})}{-\log\Theta_t(\xi_n)}
    =
    1 +
    \frac{
      -\log
      \frac{
        \Theta_t(\xi_{n+1})}{
        \Theta_t(\xi_n)
      }
    }{
      -\log\Theta_t(\xi_n)
    }
    =
    1 +
    \frac{-\frac{1}{n}
      \log \frac{\Theta_t(\xi_{n+1})}{\Theta_t(\xi_n)}
    }{
      -\frac{1}{n}\log\Theta_t(\xi_n)
    }.
  \]
  Using \eqref{eq:cvdimdistrays}, we obtain
  $\lim_{n \to\infty} f(r_{n+1})/f(r_n) = 1$, and conclude by the preceding lemma.
\end{proof}

We now associate to the random marked tree $T$ an \emph{age-dependent process}
(in the definition of \cite[chapter~4]{athreya_ney_book}).
For any $x \in T$, let
$\Lambda_x \coloneqq -\log \left(1 - \Gamma_x^{-1}\right)$
be the lifetime of particle $x$.
Informally, the root lives for time
$\Lambda_\root$, then simultaneously dies
and gives birth to $\nu_T(\root)$ children who all have i.i.d.
lifetimes $\Lambda_1$, $\Lambda_2$, \dots, $\Lambda_{\nu_T(\root)}$, and then
independently live and produce their own offspring and die, and so on.
We are interested in the number $Z_u(T)$ of living individuals at time $u>0$,
that is
\[
  Z_u (T) \coloneqq \#
  \setof[\Big]{x \in T}{
    \sum_{y \preceq \prt{x}}^{} \Lambda_y < u
    \leq \sum_{y \preceq x}^{} \Lambda_y
  }.
\]
The \emph{Malthusian parameter} of this process is the unique real number
$\alpha > 0$ such that
\begin{equation}\label{eq:malthus}
  \E \big[ e^{ - \alpha \Lambda_\root } \big] = \frac{1}{m}.
\end{equation}

We now assume that $ \sum_{k=1}^{\infty}p_k k\log k$ is finite.
Under this assumption, we know from
\cite[Theorem~5.3]{jagersnerman_growth}\footnote{See also Section~3.4 of the
preliminary Saint-Flour 2017 lecture notes by Remco~Van~Der~Hoffstad.} that
there exists a positive random variable $W^\Gamma (T)$ such that
$\E [W^\Gamma (T) ] = 1$ and almost surely,
\begin{equation}\label{eq:defw}
  \lim_{u \to \infty} e^{-u\alpha}Z_u (T)
  = W^\Gamma (T).
\end{equation}
By definition, we obtain the recursive equation
\begin{equation}\label{eq:recunifgamma}
  W^\Gamma (T)
  =
  e^{-\alpha \Lambda_\root}
  \sum_{j=1}^{\numch_T(\root)} W^\Gamma (T[i]).
\end{equation}

We now go back to our original tree with recursive lengths.
Equations~\eqref{eq:malthus}, \eqref{eq:defw} and \eqref{eq:recunifgamma}
become
\begin{gather}
  \E [ (1 - \Gamma_\root^{-1})^\alpha ] = 1/m ;
  \\
  \lim_{\varepsilon \to 0}\varepsilon^\alpha Z_{-\log(\varepsilon)} (T)
    = W^\Gamma (T);
  \label{eq:limZ}\\
  W^\Gamma (T) = (1 - \Gamma_\root^{-1} )^\alpha
    \sum_{j=1}^{\numch_T(\root)}
    W^\Gamma (T[i] ).
  \label{eq:recunifgamma2}
\end{gather}
We define the $\GW$-flow rule $\unif^\Gamma$ by
\[
  \unif^\Gamma_T (i)
  =
  \frac{
    W^\Gamma (T [i] )}{
    \sum_{j=1}^{\numch_T(\root)} W^\Gamma (T[j])
  },
  \quad \forall \, 1 \leq i \leq \numch_T(\root).
\]
\begin{proposition}[dimension of the limit uniform
  measure]\label{prop:dimunifgamma}
  Assume that
  $ \sum_{k \geq 1}^{} p_k k\log k$ is
  finite. Then, both
  the dimension of $\unif^\Gamma_T$ and the Hausdorff dimension of the boundary
  $\rays{T}$, with respect to the metric
  $d^\Gamma$, are almost surely equal to the Malthusian parameter $\alpha$.
\end{proposition}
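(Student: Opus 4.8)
The plan is to treat $\unif^\Gamma$ in complete analogy with the way the uniform flow $\unif$ is handled for the natural metric: first produce an explicit absolutely continuous invariant measure, read off the natural-metric dimension, convert it to the length metric via \Cref{prop:holdergamma}, and only at the end pin down $\hausd\rays T$ by a covering argument driven by the age-dependent process. The unifying object throughout is the martingale limit $W^\Gamma(T)$ and its recursion \eqref{eq:recunifgamma2}.

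First I would show that the probability measure $\mu$ of density $W^\Gamma(T)$ with respect to $\GW$ (legitimate since $\E[W^\Gamma(T)]=1$) is $\unif^\Gamma$-invariant. For nonnegative measurable $f$, writing $\Xi$ for a ray chosen according to $\unif^\Gamma_T$, the factor $\sum_j W^\Gamma(T[j])$ cancels against the denominator of $\unif^\Gamma_T$, and using \eqref{eq:recunifgamma2} one gets
\[
  \E_\mu \bracks*{ f(T[\Xi_1]) }
  = \E \bracks*{ \sum_{i=1}^{\numch_T(\root)} (1-\Gamma_\root^{-1})^\alpha\, W^\Gamma(T[i])\, f(T[i]) }.
\]
Conditioning on $\numch_T(\root)=k$ and invoking the branching property together with the Malthusian identity \eqref{eq:malthus}, which reads $\E[(1-\Gamma_\root^{-1})^\alpha]=1/m$, the right-hand side becomes $m^{-1}\cdot m\cdot \E[W^\Gamma(T)f(T)]=\E_\mu[f(T)]$. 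Thus $\mu$ is an absolutely continuous invariant measure, so \Cref{prop:flow_fondamental} applies: $\mu$ is the unique such measure, the system is ergodic, and $\unif^\Gamma_T$ is exact-dimensional. (Equivalently, $\unif^\Gamma$ is covered by \Cref{thm:alg} with the first example $h(u,v)=uv$ after relabelling the marks as $(1-\Gamma_\root^{-1})^\alpha$.)

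Next I would compute the natural-metric dimension. By \eqref{cor:hausdflow} and \eqref{eq:recunifgamma2}, which gives $\sum_j W^\Gamma(T[j])=W^\Gamma(T)(1-\Gamma_\root^{-1})^{-\alpha}$,
\[
  -\log \unif^\Gamma_T(\Xi_1)
  = -\alpha\log(1-\Gamma_\root^{-1}) + \log W^\Gamma(T) - \log W^\Gamma(T[\Xi_1]).
\]
The telescoping term $\log W^\Gamma(T)-\log W^\Gamma(T[\Xi_1])$ is bounded below by the integrable function $\alpha\log(1-\Gamma_\root^{-1})$ (again by \eqref{eq:recunifgamma2}), so by invariance and \Cref{lem:classic} its $\E_\mu$-integral is $0$; integrability of $-\log(1-\Gamma_\root^{-1})$ under $\mu$ follows from the computation $\E_\mu[-\log(1-\Gamma_\root^{-1})]=m\,\E[-\log(1-\Gamma_\root^{-1})(1-\Gamma_\root^{-1})^\alpha]<\infty$, the expectation being finite because $t\mapsto t^\alpha(-\log t)$ is bounded on $\intoo{0}{1}$. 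Hence $\dim^{\distrays}\unif^\Gamma_T=\alpha\,\E_\mu[-\log(1-\Gamma_\root^{-1})]$, and feeding this into \Cref{prop:holdergamma} the denominator cancels exactly, yielding $\dim^{\distgamma}\unif^\Gamma_T=\alpha$.

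Finally I would identify $\hausd\rays T$ for $\distgamma$. The lower bound is free: since $\unif^\Gamma_T$ is a probability measure on $\rays T$ that is exact-dimensional of dimension $\alpha$, one has $\alpha=\hausd\unif^\Gamma_T\le\hausd\rays T$. For the upper bound I would cover $\rays T$ by the cutset $\Pi_u$ of individuals alive at time $u$, namely the $x$ with $1-\abs{x}^\Gamma\le e^{-u}<1-\abs{\prt{x}}^\Gamma$; by \eqref{eq:ballsgamma} each cylinder $\cyl[T]{x}$ with $x\in\Pi_u$ is a ball of $\distgamma$-diameter $1-\abs{x}^\Gamma\le e^{-u}$, and $\#\Pi_u=Z_u(T)$. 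Every ray $\xi$ with $\sum_n\Lambda_{\xi_n}=\infty$ meets $\Pi_u$ for each $u$, while the remaining rays (finite total lifetime) are isolated points of $(\rays T,\distgamma)$ — small balls around them are singletons since the diameters of the surrounding cylinders stay bounded away from $0$ — and therefore form a countable set $E$ of Hausdorff dimension $0$. For $s>\alpha$ the cover gives $\hausm^s_{e^{-u}}(\rays T\setminus E)\le Z_u(T)e^{-us}$, which by \eqref{eq:limZ} (so that $Z_u(T)\sim W^\Gamma(T)e^{u\alpha}$) tends to $0$ as $u\to\infty$; hence $\hausm^s(\rays T)=0$ and $\hausd\rays T\le\alpha$. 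I expect the main obstacle to be precisely this covering step: one must check that the living cutsets genuinely exhaust the boundary at scale $e^{-u}$ apart from the countable, dimension-$0$ set of short rays, and carefully invoke the Jagers–Nerman growth \eqref{eq:defw}--\eqref{eq:limZ} (valid under $\sum_k p_k k\log k<\infty$) to control $\#\Pi_u$; the secondary delicate point is the integrability bound underlying the application of \Cref{lem:classic}.
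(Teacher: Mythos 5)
Your proposal follows the paper's proof essentially step for step: the same invariant measure of density $W^\Gamma(T)$ (the paper likewise invokes \Cref{thm:alg} with $h(u,v)=uv$ and marks $(1-\Gamma_x^{-1})^\alpha$, ``or a direct computation''), the same evaluation of $\dim^{\distrays}\unif^\Gamma_T=\alpha\,\E_\mu[-\log(1-\Gamma_\root^{-1})]$ via \eqref{cor:hausdflow}, \eqref{eq:recunifgamma2} and \Cref{lem:classic} resting on the boundedness of $x\mapsto x^\alpha\log x$ on $\intoo{0}{1}$, the exact cancellation in \Cref{prop:holdergamma}, and the same covering of $\rays{T}$ by the cutset of individuals alive at time $u$ combined with \eqref{eq:limZ}. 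Your direct computation $\E_\mu[-\log(1-\Gamma_\root^{-1})]=m\,\E[(1-\Gamma_\root^{-1})^\alpha\,(-\log(1-\Gamma_\root^{-1}))]\le m/(\alpha e)$ is in fact a slightly cleaner route to the finiteness of the denominator than the paper's indirect remark that $\dim^{\distrays}\unif^\Gamma\le\log m$.

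The one step that does not hold as written is your treatment of the exceptional set $E$ of rays with finite total lifetime. The inference ``isolated, hence countable, hence of Hausdorff dimension $0$'' is invalid: isolated points form a countable set only in a separable metric space, and separability of $(\rays T,\distgamma)$ is precisely what would fail if $E$ were uncountable (moreover, were $E\neq\emptyset$, the standing assumption that balls of $\distgamma$ are cylinders, on which \Cref{prop:holdergamma} and the exact-dimensionality machinery rest, would already break down at the isolated rays). The correct repair is to show that $E$ is almost surely \emph{empty}: for $\theta>\alpha$, strict monotonicity of $\theta\mapsto\E[e^{-\theta\Lambda_\root}]$ and \eqref{eq:malthus} give $m\,\E[e^{-\theta\Lambda_\root}]<1$, so by Chernoff's bound the expected number of vertices $x$ of generation $n$ with $\sum_{y\preceq x}\Lambda_y\le u$ is at most $e^{\theta u}\bigl(m\,\E[e^{-\theta\Lambda_\root}]\bigr)^n$, which is summable in $n$; Borel--Cantelli and a union over $u\in\N$ then show that almost surely every ray has infinite total lifetime, so every ray crosses $A_\varepsilon$ and the cutset cylinders genuinely cover $\rays{T}$. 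You were right to flag this point --- the paper's own proof writes $\hausm^\alpha_\varepsilon(\rays{T})\le\sum_{x\in A_\varepsilon}(\diam^\Gamma\cyl[T]{x})^\alpha$ without comment and thus silently relies on the same non-explosion fact --- but your patch, unlike this one, is not airtight.
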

\begin{proof}
  We can use \Cref{thm:alg}, with $h(u,v) = uv$ and the marks
  equal to $( (1-\Gamma_x^{-1})^\alpha )_{x \in T}$
  (or a direct computation) to show that the probability measure with density
  $W^\Gamma$ with respect to $\GW$ is $\unif^\Gamma$-invariant.
  So we may apply \Cref{prop:holdergamma} to obtain that the
  dimension of $\unif^\Gamma$ with respect to the metric $\distgamma$ equals
  \[
    \dim^{d^\Gamma} \unif^\Gamma_T
    =
    \frac{
      \dim^{\distrays} \unif^\Gamma_T }{
      \E [-\log (1 - \Gamma_\root^{-1})W^\Gamma (T) ]
    }.
  \]
  The numerator equals, by formula~\eqref{cor:hausdflow} and
  the recursive equation~\eqref{eq:recunifgamma2},
  \begin{align*}
    \dim^{\distrays} \unif^\Gamma
    &=
    \E \Bigl[
      \Bigl(
        -\log W^\Gamma (T [\Xi_1] )
        + \log  \sum_{j=1}^{\nu_T(\root)} W^\Gamma \left(T[j]\right)
      \Bigr)
      W^\Gamma(T)
    \Bigr]
    \\
    &=
    \E \bracks*{
      \pars*{
        \log((1 - \Gamma_\root^{-1})^{-\alpha}) +
        \log W^\Gamma(T) -
        \log W^\Gamma( T[\Xi_1] )
      }
      W^\Gamma(T)
    }
    \\
    &=
    \alpha \E \bracks*{ -\log (1-\Gamma_\root^{-1})W^\Gamma(T) },
  \end{align*}
  provided we can show that the term
  $ (\log W^\Gamma(T) - \log W^\Gamma (T [\Xi_1] ))W^\Gamma(T)$
  is bounded from below by an integrable random variable.

  To prove this, we first use the recursive
  equation~\eqref{eq:recunifgamma2}, to obtain
  \[
    \log W^\Gamma(T) - \log W^\Gamma (T [\Xi_1] )
    =
    \alpha\log (1 - \Gamma_\root^{-1})
    + \log \pars*{
      \frac{
        \sum_{i=1}^{\numch_t{\root}} W^\Gamma(T[i])}{
        W^\Gamma(T[\Xi_1])
      }
    }.
  \]
  Since $\Xi_1$ is one of the children of the root, we have
  \[
    \log \pars*{
      \frac{
        \sum_{i=1}^{\nu_t{\root}}
        W^\Gamma(T[i])}{W^\Gamma(T[\Xi_1])
      }
    }
    \geq 0.
  \]
  Using again \eqref{eq:recunifgamma2}, we obtain
  \begin{align*}
    &\left(\log W^\Gamma(T) - \log W^\Gamma (T [\Xi_1] )\right)W^\Gamma(T)
    \\
    &\geq \alpha\log (1 - \Gamma_\root^{-1})
      (1-\Gamma_\root^{-1})^\alpha
      \sum_{i=1}^{\numch_T(\root)}W^\Gamma(T[i])
      \geq - \frac{1}{e} \sum_{i=1}^{\numch_T(\root)}W^\Gamma(T[i]),
  \end{align*}
  where, for the last inequality, we have used the fact that the minimum
  of the function $x \mapsto  x^\alpha \log(x)$ on the interval
  $\intoo{0}{1}$ is $-1 / (\alpha e)$.
  Since
  $
    \E [\sum_{i=1}^{\nu_T(\root)}W^\Gamma(T[i])] = m < \infty
  $,
  this concludes the proof that
  $
    \dim^{\distrays} \unif^\Gamma
    = \alpha \E [ -\log (1-\Gamma_\root^{-1})W^\Gamma(T) ]
  $.

  We remark that
  $
    \E [\log (1 - \Gamma_\root^{-1})W^\Gamma(T)]
  $
  is finite, because
  $
    \dim^{\distrays}\unif^\Gamma \leq \log m
  $.
  Thus we have
  \[
    \dim^{d^\Gamma} \unif^\Gamma_T
    =
    \frac{
      \dim^{\distrays} \unif^\Gamma_T }{
      \E [-\log (1 - \Gamma_\root^{-1})W^\Gamma (T) ]
    }
    =
    \frac{
      \alpha\E [\log (1 - \Gamma_\root^{-1})W(T)] }{
      \E [-\log (1 - \Gamma_\root^{-1})W^\Gamma (T) ]
    }
    =\alpha.
  \]

  We now know that the Hausdorff dimension of the boundary $\rays{T}$ (with
  respect to $d^\Gamma$) is almost
  surely greater or equal to $\alpha$, so we just need the upper bound.
  Recall the definition (\eqref{eq:hausmdeltas} and \eqref{eq:hausms}) of
  the Hausdorff measures.
  We let
  \[
    A_\varepsilon
    \coloneqq
    \setof{x \in T}{ 1 - \abs{x}^\Gamma \leq \varepsilon < 1 -
      \abs{\prt{x}}^\Gamma },
  \]
  whose number of elements is $Z_{-\log(\varepsilon)}(T)$.
  By the limit~\eqref{eq:limZ}, we have
  \[
    \hausm_\varepsilon^\alpha (\rays{T})
    \leq
    \sum_{x \in A_\varepsilon}^{} (\diam^\Gamma \cyl[T]{x})^\alpha
    \leq \varepsilon^\alpha Z_{-\log(\varepsilon)}(T)
    \xrightarrow[ \varepsilon \to 0]{ \text{a.s.} } W^\Gamma (T),
  \]
  so
  $\hausm^\alpha (\rays{T}) \leq W^\Gamma(T) < \infty$,
  which concludes the proof.
\end{proof}
\begin{proposition}[dimension drop for other flow rules]
  Assume that $\sum_{k \geq 1}^{} p_k k \log k$ is finite.
  Let $T$ be a $(\Gamma,\pp)$-Galton-Watson tree and $\Theta$ be a
  $\GW$-flow rule such that $\Theta_T$ and $\unif^\Gamma_T$ are not
  almost surely equal and there exists a $\Theta$-invariant
  probability measure $\mu$ absolutely continuous with respect to $\GW$.
  Then the dimension of $\Theta$ with respect to the distance $\distgamma$
  is almost surely strictly less than the Malthusian parameter $\alpha$.
\end{proposition}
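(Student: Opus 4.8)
The plan is to run, for the length metric $\distgamma$, the same relative-entropy (Kullback--Leibler) comparison that underlies the dimension drop \Cref{prop:dimensiondrop}, but now measuring $\Theta$ against the uniform flow rule $\unif^\Gamma$ rather than $\unif$. By \Cref{prop:holdergamma}, since $\mu$ is $\Theta$-invariant and absolutely continuous with respect to $\GW$ (hence equivalent to it by \Cref{prop:flow_fondamental}), the flow $\Theta_T$ is almost surely exact-dimensional and, combining \eqref{eq:hausdgamma} with the formula \eqref{cor:hausdflow},
\[
  \dim^{\distgamma}\Theta_T
  = \frac{\E_\mu\bracks*{-\log\Theta_T(\Xi_1)}}{\E_\mu\bracks*{-\log(1-\Gamma_\root^{-1})}}.
\]
The numerator is at most $\log m<\infty$, so if the denominator is infinite the ratio is $0<\alpha$ and there is nothing to prove. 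I therefore assume $\E_\mu\bracks*{-\log(1-\Gamma_\root^{-1})}<\infty$, in which case it suffices to prove the strict inequality $\E_\mu\bracks*{-\log\Theta_T(\Xi_1)}<\alpha\,\E_\mu\bracks*{-\log(1-\Gamma_\root^{-1})}$.

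First I would identify the right-hand side as $\E_\mu\bracks*{-\log\unif^\Gamma_T(\Xi_1)}$. The recursion \eqref{eq:recunifgamma2} gives $\unif^\Gamma_T(\Xi_1)=(1-\Gamma_\root^{-1})^{\alpha}\,W^\Gamma(T[\Xi_1])/W^\Gamma(T)$, hence
\[
  -\log\unif^\Gamma_T(\Xi_1)
  = -\alpha\log(1-\Gamma_\root^{-1})
    + \bracks*{\log W^\Gamma(T)-\log W^\Gamma(T[\Xi_1])}.
\]
Setting $g\coloneqq\log W^\Gamma$ on $\markednoleafwithrays$, the bracketed term is the coboundary $g-Sg$. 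As in the proof of \Cref{prop:dimunifgamma}, $W^\Gamma(T[\Xi_1])\le\sum_i W^\Gamma(T[i])$ together with \eqref{eq:recunifgamma2} yields $g-Sg\ge\alpha\log(1-\Gamma_\root^{-1})$, which is integrable precisely in the case under consideration. Then \Cref{lem:classic} gives $\E_\mu[g-Sg]=0$, and splitting the (now integrable) two halves yields $\E_\mu\bracks*{-\log\unif^\Gamma_T(\Xi_1)}=\alpha\,\E_\mu\bracks*{-\log(1-\Gamma_\root^{-1})}$.

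Next I would bring in $\Theta$. Conditioning on $T$, so that $\Xi_1\sim\Theta_T$, the difference of the two conditional expectations is a relative entropy between probability vectors on the children of the root:
\[
  \E_\mu\bracks*{-\log\unif^\Gamma_T(\Xi_1)\mid T}
  -\E_\mu\bracks*{-\log\Theta_T(\Xi_1)\mid T}
  = \sum_{i=1}^{\numch_T(\root)}\Theta_T(i)\log\frac{\Theta_T(i)}{\unif^\Gamma_T(i)}\ge 0,
\]
which is legitimate because all flow values are strictly positive and the root has finitely many children. Taking $\E_\mu$ and using that both $\E_\mu\bracks*{-\log\Theta_T(\Xi_1)}$ and $\E_\mu\bracks*{-\log\unif^\Gamma_T(\Xi_1)}$ are finite, the gap equals $\E_\mu$ of this divergence, hence is $\ge 0$. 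To make it strict I would argue that the divergence is positive on a set of positive measure: were it zero $\mu$-almost surely, then $\Theta_T$ and $\unif^\Gamma_T$ would agree on the children of the root for $\GW$-almost every tree, and by consistency of flow rules and the branching property this propagates by induction on the height to all vertices, forcing $\Theta_T=\unif^\Gamma_T$ almost surely and contradicting the hypothesis (equivalently, this is the dichotomy of \Cref{prop:differentflow}). Since $\mu\sim\GW$, a positive-$\GW$-probability disagreement has positive $\mu$-measure, so the divergence has strictly positive $\mu$-expectation, giving the required strict inequality and therefore $\dim^{\distgamma}\Theta_T<\alpha$.

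I expect the only delicate point to be the integral-zero step: one must check that $g=\log W^\Gamma$ is finite almost surely and that its coboundary admits an integrable lower bound, so that \Cref{lem:classic} applies and the two a priori non-integrable halves of $-\log\unif^\Gamma_T(\Xi_1)$ may be separated. The separate treatment of $\E_\mu\bracks*{-\log(1-\Gamma_\root^{-1})}=\infty$ is needed exactly because that lower bound then fails to be integrable, but there the conclusion is immediate from the dimension formula.
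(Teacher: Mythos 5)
Your proposal is correct and follows essentially the same route as the paper's proof: the ``relative-entropy comparison'' you set up is exactly the Shannon inequality the paper invokes (with strictness from the dichotomy of \Cref{prop:differentflow}), the identification $\E_\mu\bracks*{-\log\unif^\Gamma_T(\Xi_1)}=\alpha\,\E_\mu\bracks*{-\log(1-\Gamma_\root^{-1})}$ via the coboundary bound $\log\bigl(W^\Gamma(T)/W^\Gamma(T[\Xi_1])\bigr)\geq\alpha\log(1-\Gamma_\root^{-1})$ and \Cref{lem:classic} is the paper's integrability step, and the separate treatment of $\E_\mu\bracks*{-\log(1-\Gamma_\root^{-1})}=\infty$ matches the paper's opening remark. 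The only cosmetic difference is the order of operations (you evaluate the $\unif^\Gamma$ cross-term first, the paper applies Shannon's inequality first), which changes nothing of substance.
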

\begin{proof}
  First, we remark that if
  $\E_\mu [-\log (1 - \Gamma_\root^{-1})]$
  is infinite, then the
  Hausdorff dimension of $\Theta_T$ with respect to the distance $\distgamma$
  is almost surely equal to $0$, so there is nothing to prove.

  So we assume that
  $\E_\mu [-\log (1 - \Gamma_\root^{-1})]$ is
  finite.
  Let $\Xi$ be a random ray in $\rays{T}$ with distribution $\Theta_T$. Using
  formula~\eqref{cor:hausdflow} and conditioning on the value of $\Xi_1$ gives
  \[
    \dim^{\distrays} (\Theta_T)
    =
    \E_\mu \Bigl[
      \sum_{i=1}^{\numch_T(\root)} -\Theta_T(i)\log(\Theta_T(i))
    \Bigr]
    <
    \E_\mu\Bigl[
      \sum_{i=1}^{\numch_T(\root)} -\Theta_T(i) \log \unif^\Gamma_T (i)
    \Bigr],
  \]
  where, for the strict inequality we have used Shannon's inequality
  together with the fact (\Cref{prop:differentflow}) that almost
  surely, $\Theta_T$ is different from $\unif^\Gamma_T$.
  This upper bound is equal to
  \[
    \E_\mu \bracks*{
      \alpha (-\log (1 - \Gamma_\root^{-1}))
      + \log W^\Gamma (T)
      - \log W^\Gamma (T [\Xi_1] )
    }.
  \]
  Once again, all that is  left to prove is that the last two terms are bounded
  from below by an integrable random variable, and this is the case, because
  \[
    \log \frac{W^\Gamma(T)}{W^\Gamma (T[\Xi_1] )}
    \geq
    \alpha \log (1 - \Gamma_\root^{-1}),
  \]
  and by our assumption that
  $\E_\mu [\log(1 - \Gamma_\root^{-1})]$
  is finite.
  Cancelling out this term in
  equation~\eqref{eq:hausdgamma}, we finally obtain
  $\dim^{d^\Gamma} \Theta_T < \alpha$.
\end{proof}
Before we state and prove the main theorem of this subsection, we want to know
when the dimension (with respect to $\distgamma$)
of the harmonic measure equals $0$.
\begin{lemma}
  \label{lem:zerodim}
  Let $T$ be a $(\Gamma,\pp)$-Galton-Watson marked tree.
  Assume that
  $\E [\kappa (\phi(T))]$ and
  $ \sum_{k \geq 1}^{}p_k k$
  are finite.
  Then, we have
  \[
    \E [ \log (1 - \Gamma_\root^{-1} )\kappa(\phi(T)) ]
    < \infty
    \iff
    \E [ \log (1 - \Gamma_\root^{-1})] < \infty.
  \]
\end{lemma}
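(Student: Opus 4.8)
The plan is to localise the whole question to the region where $\Gamma_\root$ is close to $1$, which is the only place where $-\log(1-\Gamma_\root^{-1})$ can blow up, and to show that there the weight $\kappa(\phi(T))$ is pinned between two positive constants. Two structural facts drive everything: $\phi(T) > 1$ almost surely (since $T$ almost surely has at least two rays), and $\phi(T) = \Gamma_\root \beta(T) \le \Gamma_\root$ because $\beta(T) \le 1$; together these confine $\phi(T)$ to $\intof{1}{\Gamma_\root}$. Since both $-\log(1-\Gamma_\root^{-1})$ and $-\log(1-\Gamma_\root^{-1})\kappa(\phi(T))$ are non-negative, the content of the statement is the finiteness of their expectations.

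First I would establish a two-sided estimate on $\kappa$ near $1$. Writing $\kappa(u) = \E[h(u,V)]$ with $V \coloneqq \sum_{j=1}^{\numch_{\tilde{T}}(\root)} \phi(\tilde{T}[j]) > 1$ and $h(u,v) = uv/(u+v-1)$, an elementary computation gives, for $u \in \intof{1}{2}$ and $v > 1$, the bound $\tfrac12 < h(u,v) < u \le 2$: the upper estimate is simply $h(u,v) < u$ (valid because $u>1$), while the lower one follows from $h(u,v) > v/(u+v-1) \ge v/(1+v) > \tfrac12$, using $u-1 \le 1$ and $v>1$. Integrating over $V$ yields $\tfrac12 < \kappa(u) < 2$ for every $u \in \intof{1}{2}$.

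Then I would split the expectations according to $\{\Gamma_\root > 2\}$ and $\{\Gamma_\root \le 2\}$. On $\{\Gamma_\root > 2\}$ we have $0 \le -\log(1-\Gamma_\root^{-1}) < \log 2$, so both contributions are automatically finite: the unweighted one is at most $\log 2$, and the $\kappa$-weighted one is at most $\log(2)\,\E[\kappa(\phi(T))]$, finite by hypothesis. On $\{\Gamma_\root \le 2\}$ we have $\phi(T) \in \intof{1}{2}$, hence $\tfrac12 < \kappa(\phi(T)) < 2$ by the previous step, giving the sandwich
\[
  \tfrac12\,\E\bigl[-\log(1-\Gamma_\root^{-1})\,\indic{\Gamma_\root \le 2}\bigr]
  \le \E\bigl[-\log(1-\Gamma_\root^{-1})\,\kappa(\phi(T))\,\indic{\Gamma_\root \le 2}\bigr]
  \le 2\,\E\bigl[-\log(1-\Gamma_\root^{-1})\,\indic{\Gamma_\root \le 2}\bigr].
\]
Chaining the two regions then yields the claimed equivalence.

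The main obstacle is conceptual rather than computational: one must recognise that the a priori awkward correlation between $\Gamma_\root$ and $\phi(T)$ is in fact harmless, because the constraint $1 < \phi(T) \le \Gamma_\root$ drives $\phi(T)$ into the small regime $\intof{1}{2}$ exactly when $\Gamma_\root$ does, so $\kappa(\phi(T))$ stays bounded above and below precisely where the logarithmic factor is singular. The integrability hypothesis $\E[\kappa(\phi(T))] < \infty$ then absorbs the large-$\Gamma_\root$ tail, where the logarithmic factor is instead uniformly bounded.
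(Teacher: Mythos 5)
Your proof is correct, and it takes a genuinely different route from the paper's. The paper argues globally and algebraically: by Tonelli and the symmetry/associativity of $h$, it rewrites the weight inside the expectation as $h\bigl(\Gamma_\root, h\bigl(\sum_i \phi(T[i]), \sum_j \phi(\tilde{T}[j])\bigr)\bigr)$, gets the direct implication from $h(u,v) > 1$ (equivalently $(u-1)(v-1)>0$), and gets the converse from $h(u,v) < v$ together with a subadditivity estimate showing the inner $h$ --- which is \emph{independent} of $\Gamma_\root$ --- has expectation at most $\E[\nu_T(\root)]\,\E[\kappa(\phi(T))]$; this is exactly where the hypothesis $\sum_k p_k k < \infty$ enters. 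You instead argue locally and pathwise: the constraint $1 < \phi(T) = \Gamma_\root\beta(T) \le \Gamma_\root$ forces $\phi(T) \in \intof{1}{2}$ precisely on the singular region $\{\Gamma_\root \le 2\}$, where your two-sided bound $\tfrac12 < \kappa(u) < 2$ (both halves of which check out: $h(u,v)<u$ needs only $u>1$, and $h(u,v) > v/(u+v-1) \ge v/(v+1) > \tfrac12$ needs $u \le 2$, $v>1$) sandwiches the weighted integrand by constant multiples of the unweighted one, while on $\{\Gamma_\root > 2\}$ the logarithm is bounded by $\log 2$ and the hypothesis $\E[\kappa(\phi(T))] < \infty$ absorbs the weight. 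Your decoupling of the correlation between $\Gamma_\root$ and $\phi(T)$ is thus achieved by localization rather than by the independent-copy/associativity trick. What each approach buys: yours is more elementary --- notably, it never uses $\sum_k p_k k < \infty$, so within this lemma that hypothesis is redundant for your argument (the paper assumes $m < \infty$ globally anyway), and it requires nothing of $h$ beyond the two monotone bounds; the paper's yields the cleaner global multiplicative estimate $\E[-\log(1-\Gamma_\root^{-1})\kappa(\phi(T))] \le \E[-\log(1-\Gamma_\root^{-1})]\,\E[\nu_T(\root)]\,\E[\kappa(\phi(T))]$ and reuses the associativity mechanism that recurs elsewhere in the paper. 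One cosmetic simplification available to you: the global inequality $\kappa(u) > 1$ for $u > 1$ (from $(u-1)(v-1)>0$) makes the direct implication immediate without any splitting, so your case analysis is really only needed for the converse.
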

\begin{proof}
  By Tonelli's theorem, the definition of $\kappa$, and the associativity
  property of the function $h$, we have
  \[
    \E [ \log (1 - \Gamma_\root^{-1} )\kappa(\phi(T)) ]
    = \E \bracks[\Bigg]{
        \log (1 - \Gamma_\root^{-1} )
        h \Bigl(
          \Gamma_\root,
          h\bigl(
            \sum^{\nu_T(\root)}_{i=1} \phi(T[i]),
            \sum^{\nu_{\tilde{T}}(\root)}_{j=1} \phi (\tilde{T}[j])
          \bigr)
        \Bigr)
      },
  \]
  where $\tilde{T}$ is a $(\Gamma, \pp)$-Galton-Watson tree,
  independent of $T$.
  Since for any $u$ and $v$ greater than $1$, $h(u,v) > 1$, the direct
  implication is proved.
  For the reciprocal implication,
  recall that for $u$ and $v$ in $\intoo{1}{\infty}$,
  $h(u,v) < v$, hence
  \[
    h \Bigl(
      \Gamma_\root,
      h \bigl(
        \sum^{\nu_T(\root)}_{i=1} \phi(T[i]),
        \sum^{\nu_{\tilde{T}}(\root)}_{j=1} \phi (\tilde{T}[j])
      \bigr)
    \Bigr)
    <
    h \Bigl(
      \sum^{\numch_T(\root)}_{i=1} \phi(T[i]),
      \sum^{\numch_{\tilde{T}}(\root)}_{j=1} \phi (\tilde{T}[i])
    \Bigr).
  \]
  The right-hand side of the previous inequality is integrable.
  Indeed,
  \begin{align*}
    h \Bigl(
      \sum^{\numch_T(\root)}_{i=1} \phi(T[i]),
      \sum^{\numch_{\tilde{T}}(\root)}_{j=1} \phi (\tilde{T}[i])
    \Bigr)
    &=
    \frac{
      \bigl( \sum^{\numch_T(\root)}_{i=1} \phi (T[i]) \bigr)
      \bigr( \sum^{\numch_{\tilde{T}}(\root)}_{j=1} \phi (\tilde{T}[j]) \bigr)
    }{%
      \sum^{\numch_T(\root)}_{i=1} \phi (T[i]) +
      \sum^{\numch_{\tilde{T}}(\root)}_{j=1} \phi (\tilde{T}[j])
      - 1
    }
    \\
    &\leq
    \sum^{\numch_T(\root)}_{i=1}
    \frac{
      \phi (T[i])
      \bigl(
        \sum^{\numch_{\tilde{T}}(\root)}_{j=1} \phi (\tilde{T}[j])
      \bigr)
    }{%
    \phi (T[i]) +
      \sum^{\numch_{\tilde{T}}(\root)}_{j=1} \phi (\tilde{T}[j]) - 1
    }
  \end{align*}
  and the expectation of this upper bound equals, by independence,
  \[
    \E [\nu_T(\root)] \E [\kappa (\phi(T))],
  \]
  which is finite by assumption.
  Thus, using the fact that
  \[
    \Gamma_\root \text{ and }
    h \Bigl(
      \sum^{\nu_T(\root)}_{i=1} \phi(T[i]),
      \sum^{\nu_{\tilde{T}}(\root)}_{j=1} \phi (\tilde{T}[i])
    \Bigr)
    \text{ are independent},
  \]
  we have
  \[
    \E [ \log (1 - \Gamma_\root^{-1} )\kappa(\phi(T)) ]
    \leq
    \E [ \log ( 1 - \Gamma_\root^{-1} ) ]
    \E [\nu_T(\root)] \E [\kappa (\phi(T))],
  \]
  which proves the reciprocal implication of the lemma.
\end{proof}
Putting everything together, we finally obtain the dimension drop for
the flow rule $\harm^\Gamma$, with respect to the distance $\distgamma$.

\begin{theorem}
  Let $T$ be a $(\Gamma,\pp)$-Galton-Watson marked tree, with metric
  $\distgamma$ on its boundary.
  Assume that both
  $\E [\kappa (\phi(T))]$ and
  $ \sum_{k \geq 1}^{}p_k k \log k$
  are finite.
  Then, almost surely, the flow
  $\harm_T^\Gamma$ is exact-dimensional, of deterministic dimension
  \begin{equation}\label{eq:dim_gamma}
    \dim^{d^\Gamma} \harm^\Gamma (T)
    =
    \frac{
      \E [\log (1 - \Gamma_\root^{-1}\phi(T))
      \kappa(\phi(T))] }{
      \E [\log (1 - \Gamma_\root^{-1})
      \kappa(\phi(T))] }
      -1,
  \end{equation}
  except in the case
  $\E [-\log (1 - \Gamma_\root^{-1})] = \infty$,
  where it is $0$.
  This deterministic dimension is strictly less
  than the Malthusian parameter $\alpha$
  (which is almost surely the Hausdorff dimension of the boundary $\rays{T}$
  with respect to the distance $\distgamma$) as soon as the mark law and the
  reproduction law are not both degenerated.
\end{theorem}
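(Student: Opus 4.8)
The plan is to derive \eqref{eq:dim_gamma} by feeding the $\distrays$-dimension of $\harm^\Gamma$ computed in \Cref{thm:reclengthd} into the change-of-metric formula \eqref{eq:hausdgamma} of \Cref{prop:holdergamma}. First I would apply \Cref{thm:reclengthd} (its hypothesis $\E[\kappa(\phi(T))] < \infty$ being assumed) to produce the $\harm^\Gamma$-invariant probability measure $\mu$ of density $C^{-1}\kappa(\phi(T))$ with respect to $\GW$; since $\mu$ is absolutely continuous with respect to $\GW$, \Cref{prop:holdergamma} applies to the flow rule $\Theta = \harm^\Gamma$ and yields both the exact-dimensionality of $\harm^\Gamma_T$ on $(\rays T, \distgamma)$ and the identity
\[
  \dim^{\distgamma}\harm^\Gamma_T = \frac{\dim^{\distrays}\harm^\Gamma_T}{\E_\mu[-\log(1-\Gamma_\root^{-1})]}.
\]

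Next I would compute the two pieces of this quotient. Writing $A := \E[\log(1-\Gamma_\root^{-1})\kappa(\phi(T))]$ and $B := \E[\log(1-\Gamma_\root^{-1}\phi(T))\kappa(\phi(T))]$, formula \eqref{eq:dimgamma} says $\dim^{\distrays}\harm^\Gamma_T = C^{-1}(A-B)$, while the denominator is $\E_\mu[-\log(1-\Gamma_\root^{-1})] = -C^{-1}A$. After cancelling $C^{-1}$ the quotient becomes $(A-B)/(-A) = B/A - 1$, which is exactly \eqref{eq:dim_gamma}; note $A < 0$ since $\log(1-\Gamma_\root^{-1}) < 0$ and $\kappa > 0$, so the expression is well defined. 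The role of \Cref{lem:zerodim} (usable because $\sum_k p_k k < \infty$) is to guarantee that $A$, and hence $B = A - C\dim^{\distrays}\harm^\Gamma_T$, is finite precisely when $\E[-\log(1-\Gamma_\root^{-1})] < \infty$; in the complementary case $\E[-\log(1-\Gamma_\root^{-1})] = \infty$ the denominator above is infinite and the dimension is $0$, as already observed at the beginning of the dimension-drop proposition above.

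For the dimension-drop assertion I would invoke that same proposition on dimension drop for flow rules different from $\unif^\Gamma$ (whose hypotheses $\sum_k p_k k\log k < \infty$ and the existence of the absolutely continuous invariant measure $\mu$ hold), together with \Cref{prop:dimunifgamma}, which identifies the Hausdorff dimension of $\rays T$ with the Malthusian parameter $\alpha$. Its conclusion $\dim^{\distgamma}\harm^\Gamma_T < \alpha$ is conditional on $\harm^\Gamma_T$ and $\unif^\Gamma_T$ being almost surely \emph{different}, and by \Cref{prop:differentflow} these flow rules are either almost surely equal or almost surely different. Since $\harm^\Gamma$ is the flow rule built from $\phi$ and $\unif^\Gamma$ the one built from $W^\Gamma$, \Cref{lem:equalflows} reduces everything to: if $W^\Gamma = K\phi$ $\GW$-almost surely for some $K > 0$, then both laws degenerate.

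The hard part is this degeneracy analysis. Assuming $W^\Gamma = K\phi$, substituting into \eqref{eq:recunifgamma2} gives $\phi(T) = (1-\Gamma_\root^{-1})^\alpha \sum_{i=1}^{\nu_T(\root)}\phi(T[i])$, which I would compare with the harmonic recursion \eqref{eq:recphi}. Writing $\Sigma := \sum_{i=1}^{\nu_T(\root)}\phi(T[i])$ and eliminating $\phi(T)$ between the two relations yields, after dividing by $\Sigma > 0$, the equation $(1-\Gamma_\root^{-1})^\alpha = \Gamma_\root/(\Gamma_\root-1+\Sigma)$, equivalently $\Sigma = g(\Gamma_\root)$ with $g(u) := u(1-u^{-1})^{-\alpha} - u + 1$ on $\intoo{1}{\infty}$. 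The decisive point is that $\Sigma$ depends only on $\nu_T(\root)$ and the subtrees $T[1], T[2], \dots$, so it is independent of the root mark $\Gamma_\root$; a random variable that is at once independent of $\Gamma_\root$ and a measurable function of it is almost surely constant, whence $\Sigma \equiv c_0$ and $g(\Gamma_\root) \equiv c_0$. A routine derivative computation shows $g$ is strictly decreasing on $\intoo{1}{\infty}$, hence injective, so $g(\Gamma_\root) \equiv c_0$ forces $\Gamma_\root$ to be almost surely constant, i.e.\ the mark law degenerates; then $\phi(T) = (1-\Gamma_\root^{-1})^\alpha c_0$ is a constant $\phi_0 > 1$, so $\Sigma = \nu_T(\root)\phi_0 \equiv c_0$ can hold only if $\nu_T(\root)$ is almost surely constant, i.e.\ the reproduction law degenerates too. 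Taking contrapositives gives the dimension drop. I expect the only genuinely delicate steps to be this independence-forces-constancy argument and the elementary strict monotonicity of $g$; everything else is assembly of \Cref{thm:reclengthd}, \Cref{prop:holdergamma}, \Cref{prop:dimunifgamma} and \Cref{lem:zerodim}.
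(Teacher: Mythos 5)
Your proposal is correct, and its global architecture coincides with the paper's: the paper likewise obtains \eqref{eq:dim_gamma} as ``just a rewriting'' of \eqref{eq:hausdgamma} and \eqref{eq:dimgamma} (your $A$, $B$ bookkeeping merely makes the cancellation of $C^{-1}$ explicit, and \Cref{lem:zerodim} plays exactly the role you assign it, the exceptional case being handled by the infinite-denominator remark you cite), and it reduces the dimension drop, via the dimension-drop proposition together with \Cref{prop:differentflow}, \Cref{lem:equalflows} and \Cref{prop:dimunifgamma}, to the claim that $W^\Gamma = K\phi$ $\GW$-a.s.\ forces both laws to degenerate. The one genuine divergence is the endgame of that degeneracy analysis. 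Both arguments begin identically: combining \eqref{eq:recunifgamma2} with \eqref{eq:recphi} gives $\sum_i \phi(T[i]) = (1-\Gamma_\root^{-1})^{-\alpha}\phi(T)$, and both hinge on the same decisive observation that $\Sigma \coloneqq \sum_i \phi(T[i])$ is simultaneously a measurable function of $\Gamma_\root$ and independent of it, hence a.s.\ constant. From there the paper solves for $\phi(T) = \Gamma_\root\bigl(1-(1-\Gamma_\root^{-1})^{\alpha+1}\bigr)$, exploits the constant random-sum structure of $\Sigma$ (i.i.d.\ summands, each a function of its own root mark, independent of $\nu_T(\root)$) to degenerate $\nu_T(\root)$ and $\phi(T)$ \emph{first}, and only then reads off $(1-\Gamma_\root^{-1})^\alpha = 1/m$ to degenerate $\Gamma$ --- no monotonicity of any auxiliary function is needed. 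You instead degenerate $\Gamma_\root$ first via injectivity of $g(u) = u(1-u^{-1})^{-\alpha} - u + 1$, which costs an extra analytic fact that you label routine; it is true but requires a genuine (if elementary) step: one computes $g'(u) = u^\alpha(u-1)^{-\alpha-1}(u-1-\alpha) - 1$, so $g' \leq -1$ on $\intof{1}{1+\alpha}$, while for $u > 1+\alpha$ negativity amounts to $u^\alpha(u-1-\alpha) < (u-1)^{\alpha+1}$, which is the strict weighted AM--GM inequality $u^{\alpha/(\alpha+1)}(u-1-\alpha)^{1/(\alpha+1)} < \frac{\alpha u + (u-1-\alpha)}{\alpha+1} = u-1$. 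With that supplied, your route closes exactly as the paper's does: the paper's variant is marginally leaner in that it sidesteps the calculus entirely, whereas yours pins down the mark law directly from the functional relation before touching the offspring law.
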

\begin{proof}
  The formula for the Hausdorff dimension is just a rewriting using
  equations~\eqref{eq:hausdgamma} and~\eqref{eq:dimgamma}.
  All that is left to prove is that if
  there exists a positive real number $K$, such
  that, for $\GW$-almost every tree $t$,
  $W^\Gamma (t) = K \times \phi (t)$, then both the mark law and
  the reproduction law are degenerated.

  We assume that the latter assertion holds,
  and we proceed similarly as in the proof of \Cref{thm:reclengthd}.
  From the recursive equation~\eqref{eq:recunifgamma2}
  for $W^\Gamma$, we deduce that almost surely
  \begin{equation}\label{eq:degenerated}
    \sum_{i=1}^{\nu_T(\root)} \phi (T[i])
    =
    (1 - \Gamma_\root^{-1})^{-\alpha} \phi(T),
  \end{equation}
  and plugging it into the recursive equation~\eqref{eq:recbetagamma}
  for $\phi$, we obtain that,
  almost surely,
  \[
    \phi(T) = \Gamma_\root
    (1 - (1 - \Gamma_\root^{-1})^{\alpha + 1}).
  \]
  This implies that each $\phi(T[i])$ depends only on $\Gamma_i$ and
  \[
    \sum_{i = 1}^{\nu_T(\root)} \phi (T [i])
    =
    \Gamma_\root (1 - \Gamma_\root^{-1})^{-\alpha} + 1 - \Gamma_\root,
  \]
  so, by independence, $ \sum_{i=1}^{\nu_T(\root)} \phi (T[i])$ must
  be constant, which imposes that $\nu_T(\root)$ must be constant (equal to $m$)
  and that
  the law of $\phi(T)$ is degenerated.
  From~\eqref{eq:degenerated}, we now see that
  this implies that
  $(1 - \Gamma_\root^{-1})^\alpha = 1 /m$
  and the law of $\Gamma_\root$ is degenerated.
\end{proof}
To conclude this work, we want to check that our formula
\eqref{eq:dim_gamma} is consistent with the
formula obtained in \cite{Curien_LeGall_harmonic}.
From now on, we work under the following hypotheses:
\begin{enumerate}
  \item the reproduction law is given by $p_2 = 1$;
  \item the common law of the marks is the law of $U^{-1}$, where the law of
    $U$ is uniform on $\intoo{0}{1}$.
\end{enumerate}
\begin{remark}
  The function denoted by $t \mapsto \kappa(\phi(t))$, in
  \cite[Proposition~25]{Curien_LeGall_harmonic} is slightly different (it
  differs by a factor $1/2$) from our function also denoted by $\kappa(\phi(t))$.
\end{remark}
Under these hypotheses, Curien and Le Gall proved that
the dimension (with respect to the metric
$\distgamma$) of the harmonic measure is almost surely (see
\cite[Proposition~4]{Curien_LeGall_harmonic}):
\begin{equation}\label{eq:dim_curien_legall}
  \dim^{\distgamma} \harm^\Gamma (T)
  =
  2 \E \bracks*{
      \log\pars*{
        \frac{\phi_1+\phi_2}{\phi_1}
      }
      \frac{\phi_1 \tilde{\phi}}{\tilde{\phi} + \phi_1 + \phi_2 - 1}
    }
    \bigg/
    \E \bracks*{
      \frac{\phi_1\phi_2}{\phi_1 + \phi_2 - 1}
    },
\end{equation}
where $\phi_1$, $\phi_2$ and $\tilde{\phi}$ are independent copies of $\phi(T)$.
For short, we write $U = \Gamma_\root^{-1}$, $\phi = \phi(T)$,
$\phi_1 = \phi(T[1])$ and $\phi_2 = \phi(T[2])$.

We first show that
\begin{equation}
  \label{eq:checknum}
  \E \bracks*{
    -\log \pars*{ \frac{1 - U\phi}{1 - U} }
    \kappa(\phi)
  }
  =
  2 \E \bracks*{
    \log \pars*{
      \frac{\phi_1+\phi_2}{\phi_1}
    }
    \frac{\phi_1 \tilde{\phi}}{\tilde{\phi} + \phi_1 + \phi_2 - 1}
  }.
\end{equation}
Recall from the proof of \cref{thm:reclengthd} that, by stationarity,
\begin{equation*}
  \E [ \log (\phi) \kappa (\phi) ]
  =
  \E [ \log (\phi ( T [ \Xi_1 ] ) ) \kappa (\phi) ].
\end{equation*}
By the recursive formula \eqref{eq:recphi},
\[
  \frac{1 - U\phi}{1-U}
  =
  \frac{U^{-1}}{\phi_1 + \phi_2 + U^{-1} -1}
  =
  \frac{\phi}{\phi_1 + \phi_2},
\]
thus we obtain
\begin{align*}
  &\E \bracks*{
    \log \pars*{ \frac{1 - U\phi}{1 - U} } \kappa(\phi)
  }
  =
  \E \bracks*{
    \log \pars*{\frac{\phi}{\phi_1 + \phi_2}} \kappa(\phi)
  }
  \\
  &=
  \E \bracks*{
    \log \pars*{\frac{\phi(T[\Xi_1])}{\phi_1 + \phi_2}} \kappa(\phi)
  }
  \\
  &= \E \bracks*{
    \frac{\phi_1}{\phi_1 + \phi_2}
    \log \pars*{\frac{\phi_1}{\phi_1 + \phi_2}}
    \kappa (\phi)
    +
    \frac{\phi_2}{\phi_1 + \phi_2}
    \log \pars*{\frac{\phi_2}{\phi_1 + \phi_2}}
    \kappa (\phi)
  }
  \\
  &= 2 \E \bracks*{
  \frac{\phi_1}{\phi_1 + \phi_2}
  \log \pars*{\frac{\phi_1}{\phi_1+\phi_2}} \kappa (\phi)
  },
\end{align*}
by symmetry.
Let $\tilde{T}$ be a $(\Gamma,\pp)$-Galton-Watson tree such that
the mark of the root is $U^{-1}$, and $\tilde{T}[1]$ and $\tilde{T}[2]$ are
independent of $T[1]$ and $T[2]$. Write $\tilde{\phi}$ for the conductance of
$\tilde{T}$ and $\tilde{\phi}_i = \phi (\tilde{T}[i])$ for $i = 1, 2$.
By Tonelli's theorem and the definition of $\kappa$, we have
\begin{align*}
  \E \bracks*{
    \log \pars*{ \frac{1 - U\phi}{1 - U} }
      \kappa(\phi)
  }
  &= 2 \E \bracks*{
    \frac{\phi_1}{\phi_1 + \phi_2}
    \log \pars*{\frac{\phi_1}{\phi_1+\phi_2}}
    h \pars*{
      h \pars*{ U^{-1}, \phi_1 + \phi_2 },
      \tilde{\phi}_1 + \tilde{\phi}_2
    }
  }
  \\
  &= 2 \E \bracks*{
    \frac{\phi_1}{\phi_1 + \phi_2}
    \log \pars*{\frac{\phi_1}{\phi_1+\phi_2}}
    h \pars*{
      h \pars*{ U^{-1}, \tilde{\phi}_1 + \tilde{\phi}_2 },
      \phi_1 + \phi_2
    }
  }
  \\
  &= 2 \E \bracks*{
    \frac{\phi_1}{\phi_1 + \phi_2}
    \log \pars*{ \frac{\phi_1}{\phi_1+\phi_2} }
    h \pars*{ \tilde{\phi}, \phi_1 + \phi_2 }
  }
  \\
  &= 2 \E \bracks*{
    \frac{\phi_1}{\phi_1 + \phi_2}
    \log \pars*{ \frac{\phi_1}{\phi_1+\phi_2} }
    \frac{
      \tilde{\phi}(\phi_1+\phi_2)}{
      \tilde\phi + \phi_1 + \phi_2 - 1
    }
  },
\end{align*}
where, between the first and the second line, we have used the associativity and
the symmetry of the function $h$.
The proof of \eqref{eq:checknum} is complete.

Now, we want to show that
\begin{equation}\label{eq:checkdenum}
  \E \bracks*{ -\log (1 - U) \kappa(\phi) }
  =
  \E \bracks*{ \frac{\phi_1 \phi_2}{\phi_1 + \phi_2 - 1}}.
\end{equation}
Here, we rely heavily on the fact that $U$ is uniform on $\intoo{0}{1}$.
From~\cite[equation~(13)]{Curien_LeGall_harmonic}, we know that, for any
function
$g : \intfo{1}{\infty} \to \R_+$ such that $g(x)$ and $g'(x)$ are both $o(x^a)$ for some
$a$ in $\intoo{0}{\infty}$, we have
\begin{equation}\label{eq:curien_legall_identity}
  \E [ g(\phi_1 + \phi_2) ] = \E [\phi_1(\phi_1 - 1) g'(\phi_1)]
  + \E [g(\phi_1)].
\end{equation}
As before, let $\phi_1$, $\phi_2$, $\tilde{\phi}_1$ and $\tilde{\phi}_2$ be
independent copies of $\phi(T)$, independent of $U$.
Let $\psi_1 : \intoo{1}{\infty}^3 \to \intoo{1}{\infty}$ be defined by
\[
  \psi_1 (x, y, z) = h(x,h(y,z)) = \frac{xyz}{xy + yz + xz - x - y - z +1}.
\]
By definition of $\kappa$, we have
\[
  \E \bracks*{ -\log (1 - U) \kappa(\phi) }
  =
  \E \bracks*{
    -\log(1-U)
    \psi_1 \pars*{ U^{-1}, \phi_1 + \phi_2, \tilde{\phi}_1 + \tilde{\phi_2} }
  }.
\]
For $x$, $y$, $z$ in $\intoo{1}{\infty}$, let
\[
  \psi_2(x,y,z)
  = \psi_1(x,y,z) + y(y-1) \partial_y \psi_1 (x,y,z)
  = \frac{x^2y^2z^2}{\left(xy+xz+yz-x-y-z+1\right)^2}.
\]
Reason conditionally on $U$, $\tilde{\phi}_1$ and $\tilde{\phi}_2$ and apply
the identity \eqref{eq:curien_legall_identity} to the function
$y \mapsto \psi_1(x,y,z)$, to obtain
\[
  \E \bracks*{ -\log(1-U) \kappa(\phi) }
  =
  \E \bracks*{
    -\log(1-U)
    \psi_2 \pars*{U^{-1}, \phi_1, \tilde{\phi}_1 + \tilde{\phi}_2}
  }.
\]
Playing the same game again, we obtain
\[
  \E \bracks*{ -\log(1-U) \kappa(\phi) }
  =
  \E \bracks*{
    -\log(1-U)
    \psi_3 \pars*{ U^{-1}, \phi_1, \tilde{\phi}_1 }
  },
\]
with the function $\psi_3$ defined by
\begin{align*}
  &\psi_3 (x,y,z) = \psi_2 (x,y,z) + z(z-1) \partial_z \psi_2(x, y, z)
  \\
  &= (xyz)^2
  \left[
    \frac{2xyz}{\left(xy + xz + yz - x - y - z + 1\right)^3}
    -
    \frac{1}{\left(xy + xz + yz - x - y - z + 1\right)^2}
  \right].
\end{align*}
Fix $y$ and $z$ in $\intoo{1}{\infty}$ and let, for $u$ in $\intoo{0}{1}$,
\begin{align*}
  &\psi_4(u) = \psi_3(u^{-1},y,z)
  \\
  &= y^2z^2
  \left[
    \frac{2yz}{\left[(yz+1-y-z)u + (y+z-1)\right]^3}
    - \frac{1}{\left[ (yz + 1 - y - z)u + (y+z-1)\right]^2}
  \right]
  \\
  &= (a+b)^2
  \left[ \frac{2(a+b)}{(au+b)^3} - \frac{1}{(au+b)^2} \right],
\end{align*}
with $a = (yz+1-y-z)$ and $b = (y+z-1)$.
Finally, integrating by parts gives
\[
  \int_0^1 -\log(1-u)\psi_4(u) \dd u = \frac{a+b}{b} = \frac{yz}{y + z -1},
\]
so that, by independence of $U$, $\phi_1$ and $\tilde{\phi}_1$,
\[
  \E \left[
    -\log(1-U)\psi_3\left(U^{-1}, \phi_1, \tilde{\phi_1}\right)
    \middle| \phi_1, \tilde{\phi}_1
  \right]
  =
  \frac{\phi_1\tilde{\phi}_1}{\phi_1 + \tilde{\phi}_1 - 1},
\]
which completes the proof of \eqref{eq:checkdenum}, and the
verification of the consistency of formula \eqref{eq:dim_curien_legall} with
\eqref{eq:dim_gamma}.

\appendix

\bibliographystyle{amsplain}

\begin{thebibliography}{10}

\bibitem{Elie_speed}
Elie A{\"\i}d\'ekon, \emph{Speed of the biased random walk on a
  {G}alton-{W}atson tree}, Probab. Theory Related Fields \textbf{159} (2014),
  no.~3-4, 597--617. \MR{3230003}

\bibitem{athreya_ney_book}
K.~B. Athreya and P.~E. Ney, \emph{Branching processes}, Dover Publications,
  Inc., Mineola, NY, 2004, Reprint of the 1972 original [Springer, New York;
  MR0373040]. \MR{2047480}

\bibitem{blps2015}
Nathana\"el Berestycki, Eyal Lubetzky, Yuval Peres, and Allan Sly, \emph{Random
  walks on the random graph}, Ann. Probab. \textbf{46} (2018), no.~1, 456--490.
  \MR{3758735}

\bibitem{Curien_LeGall_harmonic}
Nicolas Curien and Jean-Fran\c{c}ois Le~Gall, \emph{The harmonic measure of
  balls in random trees}, Ann. Probab. \textbf{45} (2017), no.~1, 147--209.
  \MR{3601648}

\bibitem{cutler_density}
Colleen~D. Cutler, \emph{The density theorem and {H}ausdorff inequality for
  packing measure in general metric spaces}, Illinois J. Math. \textbf{39}
  (1995), no.~4, 676--694. \MR{1361528}

\bibitem{falconer_techniques}
Kenneth Falconer, \emph{Techniques in fractal geometry}, John Wiley \& Sons,
  Ltd., Chichester, 1997. \MR{1449135}

\bibitem{jagersnerman_growth}
Peter Jagers and Olle Nerman, \emph{The growth and composition of branching
  populations}, Adv. in Appl. Probab. \textbf{16} (1984), no.~2, 221--259.
  \MR{742953}

\bibitem{jameson_gamma}
G.~J.~O. Jameson, \emph{Inequalities for gamma function ratios}, Amer. Math.
  Monthly \textbf{120} (2013), no.~10, 936--940. \MR{3139572}

\bibitem{Shen_harmonic_infinite_variance}
Shen Lin, \emph{The harmonic measure of balls in critical {G}alton-{W}atson
  trees with infinite variance offspring distribution}, Electron. J. Probab.
  \textbf{19} (2014), no. 98, 35. \MR{3272331}

\bibitem{shen_lin_harmonic_biased}
\bysame, \emph{{Harmonic measure for biased random walk in a supercritical
  Galton-Watson tree}}, ArXiv e-prints (2017), arxiv:1707.01811.

\bibitem{Lyons_rwperco}
Russell Lyons, \emph{Random walks and percolation on trees}, Ann. Probab.
  \textbf{18} (1990), no.~3, 931--958. \MR{1062053}

\bibitem{LPP95}
Russell Lyons, Robin Pemantle, and Yuval Peres, \emph{Ergodic theory on
  {G}alton-{W}atson trees: speed of random walk and dimension of harmonic
  measure}, Ergodic Theory Dynam. Systems \textbf{15} (1995), no.~3, 593--619.
  \MR{1336708}

\bibitem{LPP_biased}
\bysame, \emph{Biased random walks on {G}alton-{W}atson trees}, Probab. Theory
  Related Fields \textbf{106} (1996), no.~2, 249--264. \MR{1410689}

\bibitem{LPP97}
\bysame, \emph{Unsolved problems concerning random walks on trees}, Classical
  and modern branching processes ({M}inneapolis, {MN}, 1994), IMA Vol. Math.
  Appl., vol.~84, Springer, New York, 1997, pp.~223--237. \MR{1601753}

\bibitem{LyonsPeres_book}
Russell Lyons and Yuval Peres, \emph{Probability on trees and networks},
  Cambridge Series in Statistical and Probabilistic Mathematics, vol.~42,
  Cambridge University Press, New York, 2016. \MR{3616205}

\bibitem{makarov1985}
N.~G. Makarov, \emph{On the distortion of boundary sets under conformal
  mappings}, Proc. London Math. Soc. (3) \textbf{51} (1985), no.~2, 369--384.
  \MR{794117}

\bibitem{Neveu_gw}
J.~Neveu, \emph{Arbres et processus de {G}alton-{W}atson}, Ann. Inst. H.
  Poincar\'e Probab. Statist. \textbf{22} (1986), no.~2, 199--207. \MR{850756}

\bibitem{trico_straymond88}
Xavier Saint~Raymond and Claude Tricot, \emph{Packing regularity of sets in
  {$n$}-space}, Math. Proc. Cambridge Philos. Soc. \textbf{103} (1988), no.~1,
  133--145. \MR{913458}

\bibitem{young_dimension}
Lai~Sang Young, \emph{Dimension, entropy and {L}yapunov exponents}, Ergodic
  Theory Dynamical Systems \textbf{2} (1982), no.~1, 109--124. \MR{684248}

\end{thebibliography}
\providecommand{\bysame}{\leavevmode\hbox to3em{\hrulefill}\thinspace}
\providecommand{\MR}{\relax\ifhmode\unskip\space\fi MR }
% \MRhref is called by the amsart/book/proc definition of \MR.
\providecommand{\MRhref}[2]{%
  \href{http://www.ams.org/mathscinet-getitem?mr=#1}{#2}
}
\providecommand{\href}[2]{#2}

\end{document}